\newtheorem{theorem}{Theorem}
\newtheorem{proposition}{Proposition}
\newtheorem{definition}{Definition}
\newtheorem{lemma}{Lemma}
\newtheorem{remark}{Remark}
\begin{document}

\title{\bf Hardy spaces associated with Schr\"odinger operators on the Heisenberg group}
\author{Chin-Cheng Lin\footnote{Corresponding author.}\,\ \footnote{Supported by 
       National Science Council of Taiwan under Grant \#NSC 97-2115-M-008-021-MY3.}\ ,\ \
       Heping Liu\footnote{Supported by National Natural Science Foundation of China 
       under Grant \#10871003, \#10990012 and the Specialized Research Fund for 
       the Doctoral Program of Higher Education of China under Grant \#2007001040.}\ ,\ \ and\ \ Yu Liu}
\date{}
\maketitle

\begin{abstract} 
Let $L= -\Delta_{\mathbb{H}^n}+V$ be a
Schr\"odinger operator on the Heisenberg group $\mathbb{H}^n$,
where $\Delta_{\mathbb{H}^n}$ is the sub-Laplacian and the
nonnegative potential $V$ belongs to the reverse H\"older class
$B_{\frac{Q}{2}}$ and $Q$ is the homogeneous dimension of
$\mathbb{H}^n$. The Riesz transforms associated with the Schr\"odinger
operator $L$ are bounded from $L^1(\mathbb{H}^n)$ to
$L^{1,\infty}(\mathbb{H}^n)$. The $L^1$ integrability of the Riesz
transforms associated with $L$ characterizes a certain Hardy type space
denoted by $H^1_L(\mathbb{H}^n)$ which is larger than the usual
Hardy space $H^1(\mathbb{H}^n)$. We define $H^1_L(\mathbb{H}^n)$
in terms of the maximal function with respect to the semigroup
$\big \{ e^{-s L}:\; s>0 \big\}$, and give the atomic
decomposition of $H^1_L(\mathbb{H}^n)$. As an application of the
atomic decomposition theorem, we prove that $H^1_L(\mathbb{H}^n)$
can be characterized by the Riesz transforms associated with $L$. All results
hold for stratified groups as well.

\vskip 0.2cm
\noindent {\bf Key words and phrases.} 
Atomic decomposition, Hardy spaces, Heisenberg group, local Hardy spaces, 
Riesz transforms, Schr\"odinger operators, stratified groups.

\vskip 0.2cm
\noindent
{\bf 2000 Mathematics Subject Classification.} Primary: 42B30. Secondary: 22E30, 35J10, 43A80.
\end{abstract}
\section {Introduction}

The Schr\"odinger operators with a potential satisfying the
reverse H\"older inequality have been studied by various authors.
Some basic results on the Euclidean spaces were established by
Fefferman \cite{Fefferman}, Shen\cite{Shen}, and Zhong \cite{Zhong}. 
The extension to a more general setting was given by
Lu \cite{Lu} and Li \cite{Li}. In this article we consider the
Schr\"odinger operator $L= -\Delta_{\mathbb{H}^n}+V$ on the
Heisenberg group $\mathbb{H}^n$, where $\Delta_{\mathbb{H}^n}$ is
the sub-Laplacian and the nonnegative potential $V$ belongs to the
reverse H\"older class $B_q, q \geq \frac{Q}{2}$. Here $Q$ is the
homogeneous dimension of $\mathbb{H}^n$. 
Let $R^L_j = X_j L^{-\frac{1}{2}},\, j=1, \cdots, 2n$, 
be the Riesz transforms associated with the Schr\"odinger operator $L$, 
where $X_j$'s are left-invariant
vector fields generating the Lie algebra of $\mathbb{H}^n$. If $q
\geq Q$, $R^L_j$ are Calder\'on--Zygmund operators (cf.
\cite{Lu}). When $\frac{Q}{2} \leq q < Q$, $R^L_j$ are bounded on
$L^p(\mathbb{H}^n)$ for $1<p\leq \frac{Qq}{Q-q}$ (cf. \cite{Li}).
In the current paper, we prove, by providing a counterexample,
that the above estimate of the range of $p$ is sharp.

It is well known that $R^L_j$ might not be Calder\'on--Zygmund
operators. However, these operators $R^L_j$ do observe some
boundedness conditions: they are bounded from $L^1(\mathbb{H}^n)$
to $L^{1,\infty}(\mathbb{H}^n)$ (see Theorem \ref{thm2} below),
and bounded from $H^1(\mathbb{H}^n)$ to $L^1(\mathbb{H}^n)$ (see
Remark \ref{rem6} below). We should remark that, unlike the
classical case, the conditions $f \in L^1(\mathbb{H}^n)$ and
$R^L_jf \in L^1(\mathbb{H}^n)$ ($j=1, \cdots, 2n$) do not ensure
$f \in H^1(\mathbb{H}^n)$. In view of these new complication, we
will introduce the notion of the Hardy space $H^1_L(\mathbb{H}^n)$
associated with $L$ in terms of the maximal function with respect
to the semigroup $\big\{ e^{-s L}: s>0 \big\}$. The atomic
decomposition of $H^1_L(\mathbb{H}^n)$ will be given and, as an
application, we prove that $H^1_L(\mathbb{H}^n)$ can be
characterized by the Riesz transforms $R^L_j$ associated with the
Schr\"odinger operator $L$. We also remark that the Heisenberg
group is a typical case of stratified groups. Our all results
can be established for stratified groups by the same arguments.

The current work is inspired by the pioneering work of
Dziuba\'nski and Zienkiewicz \cite{Dziubanski},
in which the Hardy space associated with the
Schr\"odinger operator on the Euclidean spaces was studied.
Equipped with some enhanced technique,
we will establish some extended results in the setting of
the Heisenberg group. For example, we give some descriptions
of kernels (Section 3) and develop the theory of local Hardy
spaces (Section 4). The corresponding results to these
on Euclidean spaces have been already known. On
Euclidean spaces the local Riesz transforms characterization of
local Hardy spaces is obtained via subharmonicity (cf.
\cite{Goldberg}). However, this kind of approach fails on the
Heisenberg group as pointed out in \cite{Christ}, so we have to
develop a new method. Our approach is to decompose a
function $f=\widetilde{f}+(f- \widetilde{f})$ such that
$\widetilde{f}$ is in the local Hardy space $h^1(\mathbb{H}^n)$ and
the Riesz transforms of $(f- \widetilde{f})$ are controlled by the
local Riesz transforms of $f$. We then can use the Riesz transforms
characterization of Hardy space $H^1$ on stratified groups given by
Christ and Geller (cf. \cite{Christ}).
We establish the atomic decomposition
theorem for general $H^{1,q}_L$-atoms rather than
$H^{1, \infty}_L$-atoms as in \cite{Dziubanski},
because it is more convenient
to study the dual space of $H^1_L(\mathbb{H}^n)$, which will be
dealt with in the forthcoming paper \cite{Lin}. We also prove the weak
$(1, 1)$ boundedness of Riesz transforms $R^L_j$, which is useful
to establish the $H^1_L-L^1$ boundedness of $R^L_j$.

This article is organized as follows. In Section 2, we set 
notations and state our main results. In Section 3 we give 
estimates of kernels of the semigroup $\big\{ e^{-s L}: s>0 \big\}$ 
and the Riesz transforms, which will be used in the sequel. 
Most proofs in this section are inspired from \cite{Dziubanski} 
and \cite{Shen}; however, they are new in our setting.
In Section 4 we discuss local Hardy spaces
$h^1(\mathbb{H}^n)$. Note that an $H^1_L$-function is locally
equal to a function in a certain scaled local Hardy space.
Specifically, $H^1_L(\mathbb{H}^n)$ coincides with the local Hardy
space $h^1(\mathbb{H}^n)$ if there exists a positive number $C$
such that $\frac{1}{C} \leq V \leq C$ (see Remark \ref{rem5}
below). In Section 5 we establish the atomic decomposition of
$H^1_L(\mathbb{H}^n)$. Section 6 is devoted to the Riesz
transforms $R^L_j$. We prove that $R^L_j$ are bounded from
$L^1(\mathbb{H}^n)$ to $L^{1,\infty}(\mathbb{H}^n)$, and they
characterize $H^1_L(\mathbb{H}^n)$. The counterexample mentioned
above is also given in Section 6. Finally, we include in Section 7
a brief discussion the corresponding results for stratified 
groups without proofs.

Throughout this article, we will use $C$ to denote a positive
constant, which is independent of main parameters and not
necessarily the same at each occurrence. By $A \sim B$, we mean that
there exists a constant $C>1$ such that $\frac{1}{C} \leq
\frac{A}{B}\leq C$. Moreover, we denote the conjugate exponent
of $q>1$ by $q'=q/(q-1)$.


\section {Notations and main results}

We recall some basic facts on the Heisenberg group, which are
easily found in many references. The $(2n+1)$-dimensional Heisenberg group
$\mathbb H^n$ is the Lie group with underlying manifold
$\mathbb R^{2n} \times \mathbb R$ and multiplication
\begin{eqnarray*}
(x, t)(y, s)= \big ( x+y,\, t+s+2 \sum_{j=1}^{n} (x_{n+j} y_{j} -
x_{j} y_{n+j}) \big ).
\end{eqnarray*}
A basis for the Lie algebra of left-invariant vector fields on
$\mathbb H^n$ is given by
\begin{eqnarray*}
X_{2n+1}= \frac{\partial}{\partial t},\ \
X_j= \frac{\partial}{\partial x_j}+2x_{n+j}
\frac{\partial}{\partial t},\ \ X_{n+j}=\frac{\partial}{\partial
x_{n+j}}-2x_j \frac{\partial}{\partial t},\ \ j=1, \cdots, n.
\end{eqnarray*}
All non-trivial commutators are $[X_j, X_{n+j}]= -4 X_{2n+1},\, j=1, \cdots, n$.
The sub-Laplacian $\Delta_{\mathbb H^n}$ and the gradient
$\nabla_{\mathbb H^n}$ are defined respectively by
\begin{eqnarray*} \Delta_{\mathbb H^n}= \sum^{2n}_{j=1} X^2_j\qquad
\text{and}\qquad \nabla_{\mathbb H^n}= (X_1, \cdots, X_{2n}).
\end{eqnarray*}
The dilations on $\mathbb H^n$ have the form
\begin{eqnarray*}
\delta_r (x, t)= ( rx, r^2 t), \qquad r>0.
\end{eqnarray*}
The Haar measure on $\mathbb H^n$ coincides with the Lebesgue
measure on $\mathbb R^{2n} \times \mathbb R$. The measure of any
measurable set $E$ is denoted by $|E |$. We define a homogeneous
norm on $\mathbb H^n$ by
\begin{eqnarray*}
|g|\, = \big (|x|^4+ |t|^2 \big )^\frac{1}{4}, \qquad g= (x,t) \in
\mathbb H^n.
\end{eqnarray*}
This norm satisfies the triangle inequality and leads to a
left-invariant distance $d(g,h) = |g^{-1}h|$. Then the
ball of radius $r$ centered at $g$ is given by
\begin{eqnarray*}
B(g,r)=\{h \in\mathbb H^n: \; |g^{-1}h |\, <r\}.
\end{eqnarray*}
There is a positive constant $b_1$ such that
\begin{eqnarray*}
\big| B(g,r) \big| = b_1 r^{Q}
\end{eqnarray*}
where $Q=2n+2$ is the homogeneous dimension of $\mathbb H^n$.
Exactly,
\begin{eqnarray*}
b_1= \big| B(0,1) \big| = \frac{2 \pi^{n+\frac{1}{2}} \Gamma
(\frac{n}{2})} {(n+1) \Gamma (n) \Gamma (\frac{n+1}{2})},
\end{eqnarray*}
but it is not important for us.

Now we turn to the Schr\"odinger operator 
$$L= -\Delta_{\mathbb{H}^n}+V.$$ 
A nonnegative
locally $L^q$ integrable function $\,V\,$ on $\mathbb H^n$ is said
to belong to $B_q\ (1<q<\infty)$ if there exists $C>0$ such that
the reverse H\"older inequality
\begin{eqnarray*}
\left ( \frac{1}{|B|} \int_B V(g)^q \, dg \right )^{\frac{1}{q}}
\leq C \left ( \frac{1}{|B|} \int_B V(g) \, dg \right )
\end{eqnarray*}
holds for every ball $B$ in $\mathbb H^n$. Obviously, $B_{q_{_1}}
\subset B_{q_{_2}}$ if $q_{_1} > q_{_2}$. But it is important that
the $B_q$ class has a property of ``self improvement"; that is, if
$V\in B_q$, then $V\in B_{q+\varepsilon}$ for some $\varepsilon
>0$. In this article we always assume that $0 \not \equiv V\in
B_{\frac{Q}{2}}$ and then $V \in B_{q_{_0}}$ for some $q_{_0}>
\frac{Q}{2}$. Of course we may assume that $q_{_0}<Q$.

Let $\{ T_s:\; s>0 \} = \big\{ e^{s\Delta_{\mathbb H^n}}:\; s>0
\big\}$ be the heat semigroup with the convolution kernel
$H_s(g)$. The heat kernel $H_s(g)$ satisfies the estimate
\begin{eqnarray}\label{a1}
0< H_s(g) \leq C\, s^{-\frac{Q}{2}} e^{-A_0 s^{-1} |g|^2},
\end{eqnarray}
where $A_0$ is a positive constant (cf. \cite{Jerison}). An
explicit expression of $H_s(g)$ in terms of the Fourier transform
with respect to central variable was given by Hulanicki
\cite{Hulanicki}. Because $V \geq 0$ and $V \in
L^{\frac{Q}{2}}_{\mathrm{loc}}(\mathbb H^n)$, the Schr\"odinger
operator $L$ generates a $(C_0)$ contraction semigroup $\big\{
T^L_s:\; s>0 \big\} = \big\{ e^{-s L}:\; s>0 \big\}$. Let
$K^L_s(g, h)$ denote the kernel of $T^L_s$. By the Trotter product
formula (cf. \cite{Goldstein}),
\begin{eqnarray}\label{a2}
0 \leq K^L_s(g, h) \leq H_s(g,h)=H_s(h^{-1}g).
\end{eqnarray}

Let us consider the maximal functions with respect to the
semigroups $\{ T_s:\; s>0 \}$ and $\big\{ T^L_s:\; s>0 \big\}$
defined by
\begin{eqnarray*}
Mf(g) &=& \sup _{s>0} \big| T_sf(g) \big|,\\
M^Lf(g) &=& \sup _{s>0} \big| T^L_sf(g) \big|.
\end{eqnarray*}
It is well known that the maximal function $Mf$ characterizes the
Hardy space $H^1(\mathbb{H}^n)$; that is, $f \in H^1(\mathbb{H}^n)$ 
if and only if $Mf \in L^1(\mathbb{H}^n)$, and
$\big\| f \big\|_{H^1} \sim \big\| Mf \big\|_{L^1}$ (cf.
\cite{Folland-Stein}).

We define the Hardy space $H^1_L(\mathbb{H}^n)$ associated
with the Schr\"odinger operator $L$ as follows.

\begin{definition}\label{def1}
{\rm A function $f\in L^1(\mathbb{H}^n)$ is said to be in $H^1_L(\mathbb{H}^n)$
if the maximal function $M^Lf$ belongs to $L^1(\mathbb{H}^n)$.
The norm of such a function is defined by
$\big\| f \big\|_{H^1_L} = \big\|M^Lf \big\|_{L^1}$.}
\end{definition}

It is visible from $(\ref{a2})$ that the space
$H^1_L(\mathbb{H}^n)$ is larger than the usual Hardy space
$H^1(\mathbb{H}^n)$. This fact will be seen from the atomic
decompositions of $H^1_L(\mathbb{H}^n)$.

We define the auxiliary function $\rho(g, V)= \rho(g)$ by
\begin{eqnarray*}
\rho(g)= \sup_{r>0}\, \bigg \{ r:\; \frac{1}{r^{Q-2}}\int_{B(g,
r)}V(h)\, dh \leq 1 \bigg \}, \qquad g\in\mathbb H^n.
\end{eqnarray*}
This kind of auxiliary function was introduced by Shen in \cite{Shen1994}
for the potential $V$ satisfying $\max_{x\in B} V(x)\le \frac C{|B|}\int_B V(x) dx$.
Properties of the auxiliary function $\rho(g, V)$ are given
by Shen \cite{Shen} on Euclidean spaces and by Lu \cite{Lu} on
homogeneous spaces (their auxiliary function
$m(g, V)= \frac{1}{\rho(g, V)}$ in practice). It is
known that $0< \rho(g)< \infty$ for any $g \in \mathbb{H}^n$
(from Lemma \ref{lem2} in Section 3).

\begin{definition}\label{def2}\
{\rm Let $1<q \leq \infty$. A function $a\in L^q(\mathbb{H}^n)$ is called an
$H^{1,q}_L$-atom if the following conditions hold:
\begin{itemize}
\item [\rm(i)] $\mathrm{supp}\, a \subset B(g_0,r),$
\item [\rm(ii)] $\big\| a \big\|_{L^{q}} \leq \big| B(g_0,r) \big|^{\frac 1q -1},$
\item [\rm(iii)] if $\ r < \rho(g_0),$ then $\displaystyle\int_{B(g_0,r)} a(g)\, dg =0.$
\end{itemize}}
\end{definition}

\begin{theorem}\label{thm1}
Let $f \in L^1(\mathbb{H}^n)$ and $1<q \leq \infty$.
Then $f \in H^1_L(\mathbb{H}^n)$ if and only if
$\,f$ can be written as $f= \sum_j \lambda_j\, a_j$,
where $a_j$ are $H^{1,q}_L$-atoms, $\sum_j |\lambda_j|< \infty$,
and the sum converges in $H^1_L(\mathbb{H}^n)$ norm. Moreover,
\begin{eqnarray*}
\big\| f \big\|_{H^1_L} \sim \inf \bigg\{ \sum_j |\lambda_j| \bigg\},
\end{eqnarray*}
where the infimum is taken over all atomic decompositions of $\,f$
into $H^{1,q}_L$-atoms.
\end{theorem}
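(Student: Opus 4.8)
The proof of Theorem \ref{thm1} splits naturally into two inclusions, and the plan is to treat them separately.

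\textbf{From atomic decomposition to $H^1_L$.} The plan is first to verify that every $H^{1,q}_L$-atom $a$ supported in a ball $B=B(g_0,r)$ satisfies $\|M^La\|_{L^1}\leq C$ with $C$ independent of the atom. Here one distinguishes two cases according to whether $r<\rho(g_0)$ or $r\geq\rho(g_0)$. When $r\geq\rho(g_0)$, no cancellation is available, so I would bound $\|M^La\|_{L^1}$ crudely: on a fixed dilate of $B$ use the $L^q\to L^q$ boundedness of the maximal operator $M^L$ (dominated by the Hardy--Littlewood maximal operator via $(\ref{a2})$) together with H\"older and condition (ii); away from $B$, use the pointwise Gaussian bound $(\ref{a2})$ on $K^L_s$ and the size condition (ii), integrating the Gaussian tails — the key point being that $r\geq\rho(g_0)$ lets the potential-free heat kernel estimate already yield an integrable tail after using the estimates on $\rho$ from Section 3. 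When $r<\rho(g_0)$, the atom has mean zero, so I would split $M^La\leq M^L a\cdot\chi_{2B}+\sum_{k\geq1}M^La\cdot\chi_{2^{k+1}B\setminus 2^kB}$; on $2B$ argue as before, and on the annuli use the mean-zero cancellation together with a smoothness/regularity estimate for the kernel $K^L_s(g,h)$ — this is exactly the type of kernel estimate the authors announce in Section 3 — to gain a factor $r/d(g_0,h)$, summing the resulting geometric series against $|B|^{-1}\|a\|_{L^1}\leq 1$. Once atoms are uniformly controlled, the bound $\|f\|_{H^1_L}\leq\sum_j|\lambda_j|$ for convergent atomic sums follows from subadditivity of $\|M^L(\cdot)\|_{L^1}$ and completeness.

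\textbf{From $H^1_L$ to atomic decomposition.} This is the harder direction and I expect it to be the main obstacle. The plan is to use the local Hardy space machinery developed in Section 4: an $H^1_L$ function is, on balls of radius comparable to $\rho$, locally a function in a scaled local Hardy space $h^1$. Concretely, I would fix a covering of $\mathbb{H}^n$ by balls $B_k=B(g_k,\rho(g_k))$ with bounded overlap (a Whitney-type or Besicovitch covering adapted to the slowly varying function $\rho$, whose doubling-type properties come from Section 3), together with a subordinate partition of unity $\{\psi_k\}$. Then write $f=\sum_k f\psi_k$ and show each $f\psi_k$ — after subtracting a suitable multiple of a bump to restore cancellation when needed — belongs to a (rescaled) local Hardy space on $B_k$ with $\sum_k\|f\psi_k\|_{h^1}\lesssim\|f\|_{H^1_L}$; here one must pass from control of the global semigroup maximal function $M^Lf$ to control of a truncated/local maximal function on each $B_k$, using that on the scale $\rho(g_k)$ the potential term is essentially harmless (cf. the identification $H^1_L\sim h^1$ when $V\sim1$, Remark \ref{rem5}). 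Finally invoke the atomic decomposition of the local Hardy space $h^1$ (with $L^q$-atoms, as the authors deliberately set up for the general exponent $q$) to decompose each piece into atoms supported in sub-balls of $B_k$; atoms of radius $<\rho$ inherit the moment condition (iii), while those of radius comparable to $\rho(g_k)$ need no cancellation, matching Definition \ref{def2} exactly. Controlling the $\ell^1$ sum of coefficients through all these steps, and verifying convergence in $H^1_L$-norm rather than merely in $L^1$, is where the technical weight of the argument lies.

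The equivalence of norms $\|f\|_{H^1_L}\sim\inf\sum_j|\lambda_j|$ then drops out by combining the two inclusions with their respective quantitative bounds.
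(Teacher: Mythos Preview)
Your overall architecture is correct and matches the paper's, particularly for the harder direction $H^1_L\to$ atomic: the paper does exactly what you sketch, using a partition of unity $\{\xi_{(k,\alpha)}\}$ adapted to the level sets of $\rho$, showing each $\xi_{(k,\alpha)}f$ lies in the scaled local Hardy space $h^1_k$, and invoking the local atomic decomposition. The technical heart there is a commutator-type estimate (Lemma \ref{lem17}): one bounds $\sum_{(k,\alpha)}\sup_{0<s\le 4^k}\|T^L_s(\xi_{(k,\alpha)}f)-\xi_{(k,\alpha)}T^L_sf\|_{L^1}$ by $C\|f\|_{L^1}$, which together with the comparison of $T^L_s$ and $T_s$ (Lemma \ref{lem16}) converts control of $M^Lf$ into control of $\widetilde{M}_k(\xi_{(k,\alpha)}f)$. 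No ``subtracting a bump'' step appears.

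For the easier direction, your plan diverges from the paper in one substantive way. For atoms with $r<\rho(g_0)$ you propose to exploit the mean-zero condition via a \emph{spatial regularity} estimate on $K^L_s(g,h)$. Section 3 never proves such an estimate; what it supplies instead (Lemma \ref{lem7}, estimate (\ref{a8})) is the comparison $|K^L_s(g,h)-H_s(g,h)|\leq C\rho(g)^{-\delta}|h^{-1}g|^{-(Q-\delta)}$ on the scale $|h^{-1}g|\lesssim\rho(g)$. The paper uses this to replace $\widetilde M^L_k a$ by the classical local maximal function $\widetilde{M}_k a$ up to an error of size $C\|a\|_{L^1}$, so the cancellation of $a$ is then exploited against the \emph{free} heat kernel $H_s$, whose regularity is standard. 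The remaining tail (large $s$, and $g$ far from the support) is handled by the size bound (\ref{a7}); note that your phrase ``potential-free heat kernel estimate'' is misleading here --- the integrable tail comes precisely from the $\rho$-dependent factor in (\ref{a7}), not from the Gaussian alone. Finally, rather than treating the case $r\geq\rho(g_0)$ directly, the paper first further decomposes such an atom into $H^{1,q}_L$-atoms supported on balls $B(g_{(k,\alpha)},\rho(g_{(k,\alpha)}))$ with controlled $\ell^1$-coefficients, thereby reducing everything to $r\leq\rho(g_0)$.

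So your proposal is not wrong in spirit, but the step relying on H\"older-type regularity of $K^L_s$ is a genuine gap relative to what Section 3 actually provides; you would either have to prove that regularity separately or, more economically, adopt the comparison-with-$H_s$ strategy.
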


The Riesz transforms $R^L_j$ associated with the Schr\"odinger operator $L$
are defined by
\begin{eqnarray*}
R^L_j= X_j L^{-\frac{1}{2}},\qquad j=1, \cdots, 2n.
\end{eqnarray*}
As pointed out at the beginning, $R^L_j$ are bounded on
$L^p(\mathbb{H}^n)$ for $1<p\leq p_{_0}$ where
$\frac{1}{p_{_0}}=\frac{1}{q_{_0}}-\frac{1}{Q}$. 
A counterexample will be given in Section 6 to show that the above range
of $p$ is optimal. For $p=1$, we have the following weak type estimate.

\begin{theorem}\label{thm2}
The Riesz transforms $R^L_j$ are bounded from $L^1(\mathbb{H}^n)$
to $L^{1,\infty}(\mathbb{H}^n)$.
\end{theorem}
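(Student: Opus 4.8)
To prove that each $R^L_j = X_j L^{-1/2}$ is of weak type $(1,1)$, I would use the Calder\'on--Zygmund decomposition adapted to the auxiliary function $\rho$, together with the kernel estimates for the semigroup $\{T^L_s\}$ and the Riesz transforms that are established in Section 3. The starting point is the subordination formula
\begin{eqnarray*}
L^{-\frac12} = \frac{1}{\sqrt{\pi}} \int_0^\infty T^L_s\, \frac{ds}{\sqrt s},
\end{eqnarray*}
so that the kernel $R_j^L(g,h)$ of $R^L_j$ is given by $\frac{1}{\sqrt\pi}\int_0^\infty X_{j,g} K^L_s(g,h)\,\frac{ds}{\sqrt s}$. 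The key analytic input, which I expect to be proved in Section 3, is a kernel bound of the form
\begin{eqnarray*}
\big| R_j^L(g,h) \big| \le \frac{C}{|g^{-1}h|^{Q-1}} \Bigl( 1 + \frac{|g^{-1}h|}{\rho(h)} \Bigr)^{-N}
\end{eqnarray*}
for any $N$, together with a H\"older-type regularity estimate in $g$ for $|g^{-1}g'|$ small compared to $|g^{-1}h|$: something like
\begin{eqnarray*}
\big| R_j^L(g,h) - R_j^L(g',h) \big| \le C\, \frac{|g^{-1}g'|^{\delta}}{|g^{-1}h|^{Q-1+\delta}}
\end{eqnarray*}
for $|g^{-1}g'| \le \tfrac12 |g^{-1}h|$, valid for some $\delta>0$. (The regularity may need a correction factor near the singularity, so in practice one uses it only in the far region and absorbs the near region into the $L^2$ bound.)

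\textbf{Main steps.} Fix $\lambda>0$ and $f \in L^1$. First I apply the Calder\'on--Zygmund decomposition of $f$ at height $\lambda$ relative to the homogeneous metric on $\mathbb H^n$, producing $f = g + b$ with $g = f$ off $\Omega := \bigcup_k Q_k$ and $|g| \le C\lambda$, $\|g\|_{L^1} \le \|f\|_{L^1}$, while $b = \sum_k b_k$ with $\mathrm{supp}\, b_k \subset Q_k^* = B(x_k, r_k)$ (a fixed dilate of $Q_k$), $\int |b_k| \le C\lambda |Q_k|$, and $\sum_k |Q_k| \le C\lambda^{-1}\|f\|_{L^1}$. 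The good part is handled by the $L^2$-boundedness of $R^L_j$ (which follows from $\|X_j L^{-1/2} f\|_{L^2}^2 = \langle X_j L^{-1/2} f, X_j L^{-1/2} f\rangle \le \langle L^{1/2} f\cdot L^{-1/2}\rangle$, i.e. $\sum_j \|X_j L^{-1/2} f\|_{L^2}^2 = \|f\|_{L^2}^2 - \|V^{1/2} L^{-1/2} f\|_{L^2}^2 \le \|f\|_{L^2}^2$) via Chebyshev, giving $|\{|R^L_j g| > \lambda/2\}| \le C\lambda^{-2}\|g\|_{L^2}^2 \le C\lambda^{-1}\|f\|_{L^1}$. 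For the bad part, the usual modification is needed: decompose $b_k = (b_k - \beta_k) + \beta_k$ where, when $r_k < \rho(x_k)$, we subtract the mean $\beta_k = \bigl(|Q_k^*|^{-1}\int b_k\bigr)\mathbf 1_{Q_k^*}$ so that $b_k - \beta_k$ has vanishing integral, and when $r_k \ge \rho(x_k)$ we simply take $\beta_k = 0$ (no cancellation is asked of $H^{1,q}_L$-atoms at large scales, cf. Definition \ref{def2}(iii)). The pieces $\beta_k$ are bounded in $L^1$ by $C\lambda\sum|Q_k|$ and handled by Chebyshev together with the trivial bound $\|R^L_j\|_{L^2\to L^2}\le 1$ on $L^1\cap L^\infty$-type estimates; alternatively one absorbs them into the good function. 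The essential term is $\sum_k (b_k - \beta_k)$, which we estimate on the complement of $\Omega^* = \bigcup_k B(x_k, 2r_k)$. When $r_k \ge \rho(x_k)$ we do not have cancellation, but then the fast decay factor $(1 + |x_k^{-1}h|/\rho(x_k))^{-N} \le (1 + |x_k^{-1}h|/r_k)^{-N}$ in the kernel bound makes the operator behave like an integrable convolution at scale $r_k$; integrating $\int_{(Q_k^*)^c} |R_j^L(g,h)|\,dg \le C$ uniformly in $h \in Q_k^*$ gives $\int_{(\Omega^*)^c} |R^L_j b_k| \le C\lambda |Q_k|$. When $r_k < \rho(x_k)$ we use the cancellation of $b_k - \beta_k$ and the regularity estimate in the standard way: $\int_{(Q_k^*)^c}|R^L_j(b_k-\beta_k)(g)|\,dg \le \int |b_k - \beta_k|(h) \int_{|g^{-1}x_k|\ge 2r_k} |R_j^L(g,h) - R_j^L(g,x_k)|\,dg\,dh \le C\lambda|Q_k|$. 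Summing over $k$ yields $\|\sum_k R^L_j(b_k-\beta_k)\|_{L^1((\Omega^*)^c)} \le C\lambda\sum_k|Q_k| \le C\|f\|_{L^1}$, hence $|\{g \notin \Omega^* : |R^L_j b(g)| > \lambda/2\}| \le C\lambda^{-1}\|f\|_{L^1}$; combined with $|\Omega^*| \le C\sum|Q_k| \le C\lambda^{-1}\|f\|_{L^1}$ and the good-part estimate, this gives the weak $(1,1)$ bound.

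\textbf{Expected obstacle.} The routine parts are the Calder\'on--Zygmund machinery and the summation of geometric series; the real difficulty is entirely in justifying the two kernel estimates for $R_j^L(g,h)$ on the Heisenberg group, and in particular in treating the two regimes $r_k < \rho(x_k)$ and $r_k \ge \rho(x_k)$ uniformly. Unlike the classical Riesz transform, $R^L_j$ need not be a Calder\'on--Zygmund operator (the regularity in $g$ can fail globally), so the argument must be structured so that the regularity estimate is invoked only where $r_k < \rho(x_k)$, while the large-scale pieces rely solely on the exponential/polynomial decay in $|g^{-1}h|/\rho$. Producing these kernel bounds requires combining the Gaussian bound \eqref{a1} for $H_s$, the domination \eqref{a2}, the Shen-type estimates on $\rho$, and a perturbation/Duhamel argument comparing $K^L_s$ with $H_s$ and differentiating under the integral in the subordination formula --- this is precisely the content deferred to Section 3, and once those lemmas are in hand the proof of Theorem \ref{thm2} proceeds as above.
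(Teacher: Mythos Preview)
Your overall architecture --- Calder\'on--Zygmund decomposition, $L^2$ bound on the good part, and splitting the bad pieces according to whether $r_k < \rho(x_k)$ or $r_k \ge \rho(x_k)$ --- is exactly the paper's. But the treatment of the case $r_k < \rho(x_k)$ has a genuine gap.

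You rely on a H\"older regularity estimate for the kernel of $R^L_j$ itself, of the form $|R^L_j(g,h)-R^L_j(g,x_k)| \le C\,|h^{-1}x_k|^{\delta}/|g^{-1}h|^{Q+\delta}$ (your displayed inequality is stated in the first variable, but you then use it in the second). Section~3 does \emph{not} prove such an estimate, and for good reason: when $V\in B_{q_0}$ with $Q/2<q_0<Q$, the operators $R^L_j$ are \emph{not} Calder\'on--Zygmund operators (this is precisely the point of the counterexample in Section~6). A global H\"ormander-type regularity for $R^L_j$ would force $L^p$-boundedness for all $1<p<\infty$, contradicting the sharpness of the range $1<p\le p_0$. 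Likewise, the clean pointwise size bound you write, $|R^L_j(g,h)|\le C|g^{-1}h|^{-(Q-1)}(1+|g^{-1}h|/\rho(h))^{-N}$, is not what is available: formula~(\ref{a12}) carries an extra, genuinely $V$-dependent term $|h^{-1}g|^{-(Q-1)}\int_{B(g,|h^{-1}g|/2)}V(w)|g^{-1}w|^{-(Q-1)}\,dw$ that cannot be absorbed into a simple pointwise bound.

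The paper circumvents this as follows. For $r_k<\rho(g_k)$ it splits the exterior of $2B_k$ into the annulus $2r_k\le |g_k^{-1}g|<2\rho(g_k)$ and the far region $|g_k^{-1}g|\ge 2\rho(g_k)$. On the far region Lemma~\ref{lem8} (an \emph{integral} bound $\int_{|h^{-1}g|\ge A\rho(h)}|R^L_j(g,h)|\,dg\le C$, proved from (\ref{a12}) via an $L^{p_0}$ estimate and H\"older) handles $b_k$ with no cancellation needed. On the annulus one writes $R^L_j(g,h)-R_j(g,h)$ plus $R_j(g,h)-R_j(g,g_k)$: the first difference is controlled by Lemma~\ref{lem9} (again an integral estimate, $\int_{B(h,A\rho(h))}|R^L_j(g,h)-R_j(g,h)|\,dg\le C$), and only the second uses regularity --- but that is the \emph{classical} Riesz kernel $R_j$, which is a genuine CZ kernel, combined with the mean-zero property of $b_k$. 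So the regularity burden is transferred entirely to $R_j$, never to $R^L_j$.

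Two minor points: the Calder\'on--Zygmund decomposition already produces $b_k$ with $\int b_k=0$, so your auxiliary $\beta_k$ is unnecessary; and the exponent in your size bound should be $Q$, not $Q-1$ (you are applying $X_j$ to a kernel with singularity $|h^{-1}g|^{-(Q-1)}$).
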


The Riesz transforms $R^L_j$ are also bounded from
$H^1(\mathbb{H}^n)$ to $L^1(\mathbb{H}^n)$ (see Remark \ref{rem6} below).
However, these operators do not characterize the usual Hardy space
$H^1(\mathbb{H}^n)$. They characterize $H^1_L(\mathbb{H}^n)$ which
is larger than $H^1(\mathbb{H}^n)$.

\begin{theorem}\label{thm3}
A function $f \in H^1_L(\mathbb{H}^n)$ if and only if $\,f \in
L^1(\mathbb{H}^n)$ and $R^L_jf \in L^1(\mathbb{H}^n),\, j=1,
\cdots, 2n$. Moreover,
\begin{eqnarray*}
\big\| f \big\|_{H^1_L} \sim \big\| f \big\|_{L^1} +
\sum_{j=1}^{2n} \big\| R^L_jf \big\|_{L^1}.
\end{eqnarray*}
\end{theorem}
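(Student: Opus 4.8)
The plan is to prove the two directions separately, using the atomic decomposition (Theorem \ref{thm1}) for the ``only if'' direction and the weak type estimate (Theorem \ref{thm2}) together with a suitable local/global decomposition for the ``if'' direction. For the forward implication, suppose $f \in H^1_L(\mathbb{H}^n)$. By Theorem \ref{thm1} with $q = 2$ (say), write $f = \sum_j \lambda_j a_j$ with $\sum_j |\lambda_j| \lesssim \|f\|_{H^1_L}$ and each $a_j$ an $H^{1,2}_L$-atom supported in a ball $B_j = B(g_j, r_j)$. Since $R^L_j$ is bounded on $L^2$ (indeed on $L^p$ for $1 < p \le p_0$), it suffices to show the uniform bound $\|R^L_j a\|_{L^1} \le C$ for every $H^{1,2}_L$-atom $a$; then $\|R^L_j f\|_{L^1} \le \sum_j |\lambda_j| \|R^L_j a_j\|_{L^1} \lesssim \|f\|_{H^1_L}$, and $\|f\|_{L^1} \le \|M^L f\|_{L^1} = \|f\|_{H^1_L}$ is immediate from the definition (since $T^L_s f \to f$ in an appropriate sense, or more simply $|f| \le M^L f$ a.e.\ by a Lebesgue-point argument for the semigroup). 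To estimate $\|R^L_j a\|_{L^1}$, split the integral over the dilated ball $2B_j$ and its complement. On $2B_j$ use H\"older and the $L^2$-boundedness of $R^L_j$ together with condition (ii) of Definition \ref{def2}. On $(2B_j)^c$ use the pointwise kernel estimates for the kernel of $R^L_j$ established in Section 3: when $r_j < \rho(g_j)$ one exploits the cancellation (iii) of the atom against the kernel H\"older estimate, and when $r_j \ge \rho(g_j)$ one uses instead the extra decay of the kernel coming from the auxiliary function $\rho$ (the kernel of $R^L_j$ decays faster than the Calder\'on--Zygmund rate at scales beyond $\rho$), which compensates for the lack of cancellation. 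This is essentially the argument that gives $H^1_L \to L^1$ boundedness, and Remark \ref{rem6} records the analogous $H^1 \to L^1$ statement.

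For the converse, assume $f \in L^1(\mathbb{H}^n)$ with $R^L_j f \in L^1(\mathbb{H}^n)$ for all $j$; we must produce the atomic decomposition of $f$, equivalently show $M^L f \in L^1$. The strategy follows Dziuba\'nski--Zienkiewicz and uses the local Hardy space machinery of Section 4. First, one reduces to the case where $f$ is supported in a single ball $B(g_0, \rho(g_0))$ (or in a suitable bounded region), using a partition of unity subordinate to a covering of $\mathbb{H}^n$ by balls of radius comparable to $\rho$ with bounded overlap; the $H^1_L$ norm and the $L^1 + \sum \|R^L_j \cdot\|_{L^1}$ norm are both (quasi-)additive over such a covering, up to error terms that are handled by the kernel estimates. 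On such a ``local'' piece, the operator $L$ is comparable to $-\Delta_{\mathbb{H}^n}$ at the relevant scale, so the local Riesz transforms of $f$ are controlled by the $R^L_j f$ plus lower-order (bounded) terms, and one invokes the local Riesz transform characterization of the local Hardy space $h^1(\mathbb{H}^n)$ developed in Section 4 --- which, as the introduction emphasizes, is obtained not via subharmonicity (which fails on $\mathbb{H}^n$, cf.\ \cite{Christ}) but by the decomposition $f = \widetilde{f} + (f - \widetilde{f})$ with $\widetilde{f} \in h^1$ and the Riesz transforms of $f - \widetilde{f}$ controlled by the local ones, followed by the Christ--Geller characterization of $H^1$ on stratified groups. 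Having placed $f$ locally in the appropriate scaled local Hardy space, one concludes $f \in H^1_L$ with the norm bound by summing the local pieces, and the comparability $\|f\|_{H^1_L} \sim \|f\|_{L^1} + \sum_j \|R^L_j f\|_{L^1}$ follows by combining the two directions.

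The main obstacle is the converse direction, and specifically the passage from global information (the $R^L_j f$ lying in $L^1$) to local information on the pieces of a $\rho$-adapted partition of unity: multiplying $f$ by a cutoff does not commute with $R^L_j$, so one must show that the commutator $[R^L_j, \varphi]$ maps $L^1$ into $L^1$ with norm controlled uniformly over the covering, which rests delicately on the kernel estimates of Section 3 and on the Fefferman--Phong / $\rho$-function geometry (the doubling-type and slow-variation properties of $\rho$, and the decay of the semigroup and Riesz kernels beyond scale $\rho$). A secondary difficulty is that the local Hardy space characterization must be invoked at varying scales $\rho(g_0)$, so the estimates in Section 4 need to be scale-invariant (or the scaling tracked explicitly); this is why the atomic theorem was proved for general $H^{1,q}_L$-atoms rather than only $L^\infty$-atoms. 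Once these technical points are in place, the two inclusions and the norm equivalence assemble routinely.
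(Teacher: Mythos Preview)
Your outline is essentially the paper's argument, but you have misplaced the role of the weak-type estimate. In the forward direction you write ``then $\|R^L_j f\|_{L^1} \le \sum_j |\lambda_j|\, \|R^L_j a_j\|_{L^1}$'' as if the uniform bound on atoms automatically extends to the infinite sum. This is exactly the Bownik pitfall (cf.\ \cite{Bownik}, discussed just before Lemma~\ref{lem18}): since $R^L_j$ is not $L^1$-bounded, convergence of $\sum \lambda_j a_j$ in $L^1$ does not a priori give $R^L_j f = \sum \lambda_j R^L_j a_j$ in any sense that permits the triangle inequality in $L^1$. The paper closes this with Lemma~\ref{lem18}, and it is precisely \emph{here} that Theorem~\ref{thm2} enters: the weak $(1,1)$ bound forces $|R^L_j f_m| \to |R^L_j f|$ in $L^{1,\infty}$, hence a.e.\ along a subsequence, and then Fatou gives $\|R^L_j f\|_{L^1} \le \liminf \|R^L_j f_{m_k}\|_{L^1} \le C\|f\|_{H^1_L}$.

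For the converse your sketch matches the paper (Lemmas~\ref{lem19}--\ref{lem21} are your commutator and local/global comparison steps), but Theorem~\ref{thm2} is not used there at all. One step you gloss over and should make explicit is the two-stage reduction on each piece: first from $R^L_j(\xi_{(k,\alpha)}f)$ to the \emph{standard} Riesz transform $R_j(\xi_{(k,\alpha)}f)$ on $B^*_{(k,\alpha)}$ (Lemma~\ref{lem20}, via Lemma~\ref{lem9}), and then from $R_j$ to the truncated \emph{local} Riesz transform $\widetilde R^{[k]}_j$. Only after both reductions does the local-Riesz characterization of $h^1_k$ (Remark~\ref{rem3}) apply to place $\xi_{(k,\alpha)}f$ in $h^1_k$ and produce the atomic decomposition.
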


The proof of Theorem \ref{thm1} will be given at the end of Section 5, while the proofs
of Theorems \ref{thm2} and \ref{thm3} will be given in Section 6.


\section {Estimates of the kernels}

In this section we give some estimates of kernels of the
semigroup $\{ T^L_s \}$ and the Riesz transforms $R^L_j$, which
will be used in the sequel. 
The proofs of Lemmas $\ref{lem6}-\ref{lem9}$ will closely follow the 
arguments on $\mathbb{R}^n$ as presented by Shen in \cite{Shen}.
First we collect some basic facts about
the potential $V$ satisfying the reverse H\"older inequality. We
assume that $V \in B_{q_{_0}}$ for some $q_{_0}>
\frac{Q}{2}$. We may add a restriction that $q_{_0}<Q$ when
necessary. In this case, we assume the relation
$\frac{1}{p_{_0}}=\frac{1}{q_{_0}}-\frac{1}{Q}$.

\begin{lemma}\label{lem1} The measure $V(h)\,dh$ satisfies the doubling condition;
that is, there exists $C >0$ such that
\begin{equation*}
\int_{B(g,2r)} V(h)\, dh \leq C \int_{B(g,r)} V(h)\, dh
\end{equation*}
for all balls $B(g,r)$ in $\mathbb{H}^n$.
\end{lemma}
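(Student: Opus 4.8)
The plan is to deduce the doubling property from a ``sub-ball'' sharpening of the reverse H\"older inequality together with the connectedness of $\mathbb{H}^n$. First I would record the following consequence of $V\in B_{q_{_0}}$: for every ball $B'=B(h,\rho)$ and every measurable set $E\subset B'$,
\begin{equation*}
\int_E V(g)\,dg\;\le\;\Big(\int_{B'}V(g)^{q_{_0}}\,dg\Big)^{1/q_{_0}}|E|^{1/q_{_0}'}\;\le\;C\Big(\frac{|E|}{|B'|}\Big)^{1/q_{_0}'}\int_{B'}V(g)\,dg,
\end{equation*}
where the first inequality is H\"older's and the second uses the reverse H\"older inequality for $V$ and $|B'|=b_1\rho^Q$; here $C$ depends only on $V$.

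Next I would fix $\theta\in(0,1)$ so small that $C\theta^{1/q_{_0}'}\le\tfrac12$, and then $\varepsilon\in(0,1)$ with $1-(1-\varepsilon)^Q\le\theta$ — both depending only on $V$ — and observe that if $d(p,p')\le\varepsilon\rho$ then $B(p,(1-\varepsilon)\rho)\subset B(p',\rho)$ by the triangle inequality, so $E:=B(p,\rho)\setminus B(p',\rho)$ satisfies $|E|\le\theta\,|B(p,\rho)|$ and the estimate above gives $\int_E V\le\tfrac12\int_{B(p,\rho)}V$, whence $\int_{B(p',\rho)}V\ge\int_{B(p,\rho)\cap B(p',\rho)}V\ge\tfrac12\int_{B(p,\rho)}V$. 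Since $\mathbb{H}^n$ is connected and $d$ is homogeneous under the dilations and left translations recalled above, any two points $x,y$ with $d(x,y)\le 4\rho$ can be joined by a chain $x=p_0,p_1,\dots,p_k=y$ with $d(p_{i-1},p_i)\le\varepsilon\rho$ and with $k$ bounded in terms of $\varepsilon$ only, hence in terms of $V$ only. Iterating the previous inequality along such a chain, and then interchanging the roles of $x$ and $y$, I would obtain $\int_{B(y,\rho)}V\le 2^{k}\int_{B(x,\rho)}V$ whenever $d(x,y)\le 4\rho$.

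Finally I would cover $B(g,2r)$ by balls $B(y_i,r)$, $i=1,\dots,N$, with $y_i\in B(g,2r)$ and $N\le N(Q)$ (a maximal $r$-separated subset of $B(g,2r)$ works, since the pairwise disjoint balls $B(y_i,r/2)$ all lie in $B(g,3r)$, which bounds $N$ by a volume count), and conclude, using the previous step with $x=g$ and each $y=y_i$ (note $d(g,y_i)<2r\le 4r$), that
\begin{equation*}
\int_{B(g,2r)}V(h)\,dh\;\le\;\sum_{i=1}^{N}\int_{B(y_i,r)}V(h)\,dh\;\le\;N\,2^{k}\int_{B(g,r)}V(h)\,dh .
\end{equation*}
The main obstacle is the middle step: a direct application of the reverse H\"older inequality comparing $B(g,r)$ with $B(g,2r)$ only yields the trivial bound $\int_{B(g,2r)}V\ge c\int_{B(g,r)}V$, so the needed direction must be produced by transporting the $V$-mass along a chain of heavily overlapping balls, and this succeeds only because the chain can be kept of bounded length uniformly in the ball — which is precisely where the connectedness and scaling structure of $\mathbb{H}^n$ enter.
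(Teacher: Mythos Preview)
The paper does not supply its own proof of this lemma; it simply refers the reader to \cite{Lu}. Your argument is the standard route to ``reverse H\"older implies doubling'': the $A_\infty$-type estimate $\int_E V\le C(|E|/|B'|)^{1/q_0'}\int_{B'}V$ obtained from H\"older plus the reverse H\"older inequality, then the observation that sliding the center of a ball by a small fraction of its radius costs at most a fixed factor in $V$-mass, then a chaining argument and a finite cover. This is correct.

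The one point that deserves a fuller justification is the chain step. Connectedness of $\mathbb{H}^n$ together with left-invariance and dilation homogeneity does reduce the question to a single model configuration, but connectedness by itself does not yield a \emph{uniform} bound $k=k(\varepsilon)$ on the number of steps; what is really needed is quasi-convexity of the metric. On $\mathbb{H}^n$ this holds because the Kor\'anyi norm is bi-Lipschitz equivalent to the Carnot--Carath\'eodory distance, which is geodesic. Alternatively, after reducing to $x=0$, $\rho=1$, $|y|\le 4$, one can use the dilation path $s\mapsto\delta_s(y)$: a direct computation with the group law (the cross term in the $t$-coordinate vanishes) gives $\delta_s(y)^{-1}\delta_{s'}(y)=\big((s'-s)y',(s'^2-s^2)y_{2n+1}\big)$, hence $|\delta_s(y)^{-1}\delta_{s'}(y)|\le C|s-s'|^{1/2}$, and a subdivision into $k=O(\varepsilon^{-2})$ pieces produces the required chain. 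With this supplied, your proof is complete.
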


\begin{lemma}\label{lem2}
For $0<r<R<\infty,$
\begin{eqnarray*}
\frac{1}{r^{Q-2}}\int_{B(g,r)}V(h)\, dh\leq C \bigg ( \frac{r}{R}
\bigg )^{2-\frac{Q}{q_{_0}}} \frac{1}{R^{Q-2}}\int_{B(g, R)}V(h)\,
dh.
\end{eqnarray*}
\end{lemma}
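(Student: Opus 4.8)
The plan is to exploit the reverse Hölder inequality together with a Hölder-in-the-integral estimate to transfer mass information from the large ball $B(g,R)$ down to the small ball $B(g,r)$. The starting point is that $V \in B_{q_0}$, so on the ball $B(g,R)$ we have
\begin{eqnarray*}
\left(\frac{1}{|B(g,R)|}\int_{B(g,R)} V(h)^{q_0}\,dh\right)^{1/q_0}
\leq C\,\frac{1}{|B(g,R)|}\int_{B(g,R)} V(h)\,dh.
\end{eqnarray*}
First I would estimate $\int_{B(g,r)} V(h)\,dh$ by Hölder's inequality with exponents $q_0$ and $q_0'$, writing the integral over $B(g,r)$ as $\int_{B(g,r)} V\cdot 1$ and bounding it by $\big(\int_{B(g,r)} V^{q_0}\big)^{1/q_0}|B(g,r)|^{1/q_0'}$. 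Since $B(g,r)\subset B(g,R)$ and $V\ge 0$, the first factor is at most $\big(\int_{B(g,R)} V^{q_0}\big)^{1/q_0}$, which by the reverse Hölder inequality is at most $C\,|B(g,R)|^{1/q_0 - 1}\int_{B(g,R)} V(h)\,dh$.

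Next I would assemble the powers of the ball-radii. Using $|B(g,r)| = b_1 r^Q$ and $|B(g,R)| = b_1 R^Q$, the bound so far reads
\begin{eqnarray*}
\int_{B(g,r)} V(h)\,dh
\leq C\, r^{Q/q_0'}\, R^{Q/q_0 - Q}\int_{B(g,R)} V(h)\,dh.
\end{eqnarray*}
Now I divide both sides by $r^{Q-2}$ and multiply and divide by $R^{Q-2}$ to reorganize the prefactor. The exponent of $r$ becomes $Q/q_0' - (Q-2) = Q(1 - 1/q_0) - Q + 2 = 2 - Q/q_0$, and the exponent of $R$ becomes $Q/q_0 - Q + (Q-2) = Q/q_0 - 2 = -(2 - Q/q_0)$. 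Hence the prefactor is exactly $(r/R)^{2 - Q/q_0}$, which gives
\begin{eqnarray*}
\frac{1}{r^{Q-2}}\int_{B(g,r)} V(h)\,dh
\leq C\left(\frac{r}{R}\right)^{2 - Q/q_0}\frac{1}{R^{Q-2}}\int_{B(g,R)} V(h)\,dh,
\end{eqnarray*}
as claimed.

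There is no real obstacle here; the argument is a direct bookkeeping of exponents. The only points that require a word of care are: (i) the reverse Hölder constant $C$ in the $B_{q_0}$ definition is the same uniform constant for the ball $B(g,R)$ (true by hypothesis, since $V\in B_{q_0}$ means the inequality holds for \emph{every} ball), so no dependence on $g$ or $R$ enters; and (ii) the monotonicity step $\int_{B(g,r)}V^{q_0}\le\int_{B(g,R)}V^{q_0}$ uses only $B(g,r)\subset B(g,R)$ and $V\ge 0$. Since $q_0 > Q/2$ we have $2 - Q/q_0 > 0$, so the estimate is a genuine gain as $r/R\to 0$; this positivity is what makes the lemma useful later (e.g.\ for showing $\rho(g)<\infty$), but it is not needed for the proof itself.
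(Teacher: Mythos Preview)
Your proof is correct; the combination of H\"older's inequality on the small ball, monotonicity of the $L^{q_0}$-integral under inclusion, and the reverse H\"older inequality on the large ball is exactly the standard argument, and your exponent bookkeeping is accurate. The paper itself does not give a proof of this lemma but simply cites \cite{Lu} (and the same argument appears in Shen \cite{Shen}), so there is nothing further to compare.
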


\begin{lemma}\label{lem3} If $\, r= \rho(g)$, then
\begin{eqnarray*}
\frac{1}{r^{Q-2}}\int_{B(g,r)}V(h)\, dh=1.
\end{eqnarray*}
Moreover,
\begin{eqnarray*}
\frac{1}{r^{Q-2}}\int_{B(g, r)}V(h)\, dh \sim 1 \qquad
\text{if and only if} \qquad  r \sim \rho (g).
\end{eqnarray*}
\end{lemma}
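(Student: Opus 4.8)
The plan is to exploit the definition of $\rho(g)$ as a supremum together with the monotonicity and continuity in $r$ of the quantity
\begin{eqnarray*}
\psi_g(r) = \frac{1}{r^{Q-2}}\int_{B(g,r)}V(h)\, dh.
\end{eqnarray*}
First I would observe that $\psi_g$ is continuous on $(0,\infty)$: the integral $\int_{B(g,r)}V$ is continuous (indeed absolutely continuous) in $r$ since $V \in L^1_{\mathrm{loc}}$, and dividing by $r^{Q-2}$ preserves continuity. Next, I would record the two one-sided bounds that come straight from the definition of $\rho(g)$: for every $r < \rho(g)$ (more precisely, for a cofinal family of such $r$, hence by continuity for all of them) we have $\psi_g(r) \le 1$, and for every $r > \rho(g)$ we have $\psi_g(r) > 1$. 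Letting $r \uparrow \rho(g)$ in the first inequality and $r \downarrow \rho(g)$ in the second, continuity of $\psi_g$ forces $\psi_g(\rho(g)) = 1$, which is the first assertion.

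For the ``moreover'' part, one direction is immediate from the first assertion together with Lemma \ref{lem2}: if $r \sim \rho(g)$, say $C^{-1}\rho(g) \le r \le C\rho(g)$, then applying Lemma \ref{lem2} once with the pair $(r, C\rho(g))$ when $r \le \rho(g)$, and once in the reverse configuration when $r \ge \rho(g)$, bounds $\psi_g(r)$ above and below by constants times $\psi_g(\rho(g)) = 1$; hence $\psi_g(r) \sim 1$. Here I am also using the trivial monotonicity-type comparison that follows from Lemma \ref{lem2} (it gives $\psi_g(r) \le C\psi_g(R)$ for $r<R$ up to the dimensional factor $(r/R)^{2-Q/q_0}$, which is bounded when $r \sim R$). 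For the converse, suppose $\psi_g(r) \sim 1$, i.e. $c_1 \le \psi_g(r) \le c_2$ for absolute constants. If it were the case that $r \ge \lambda \rho(g)$ for a large $\lambda$, then by Lemma \ref{lem2} applied to the pair $(\rho(g), r)$ we would get $1 = \psi_g(\rho(g)) \le C(\rho(g)/r)^{2-Q/q_0}\psi_g(r) \le C c_2 \lambda^{-(2-Q/q_0)}$, which is $<1$ once $\lambda$ is large enough (note $2 - Q/q_0 > 0$ since $q_0 > Q/2$) --- a contradiction. Symmetrically, if $r \le \lambda^{-1}\rho(g)$ for large $\lambda$, Lemma \ref{lem2} applied to $(r, \rho(g))$ gives $\psi_g(r) \le C\lambda^{-(2-Q/q_0)}\psi_g(\rho(g)) = C\lambda^{-(2-Q/q_0)} < c_1$ for $\lambda$ large, again a contradiction. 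Hence $r \sim \rho(g)$.

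The only genuinely delicate point is the continuity argument used to pin down $\psi_g(\rho(g)) = 1$: one must be slightly careful that the defining supremum is attained and that the inequality $\psi_g(r) \le 1$ really does hold for \emph{all} $r < \rho(g)$ and not merely for an unbounded sequence approaching $\rho(g)$ from below --- but this follows because if $\psi_g(r_0) > 1$ for some $r_0 < \rho(g)$, then by continuity $\psi_g > 1$ on a neighborhood of $r_0$, contradicting that values of $r$ arbitrarily close to $\rho(g)$ (hence $> r_0$) satisfy $\psi_g(r) \le 1$ together with the fact, from Lemma \ref{lem2}, that $\psi_g(r_0) \le C(r_0/r)^{2-Q/q_0}\psi_g(r)$ can be made $\le 1$. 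Everything else is a routine application of the scaling estimate in Lemma \ref{lem2}. I expect this continuity-and-monotonicity bookkeeping to be the main (though mild) obstacle; the comparisons for the ``moreover'' part are then formal consequences of Lemma \ref{lem2} and the exponent positivity $2 - Q/q_0 > 0$.
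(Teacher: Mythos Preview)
The paper does not actually prove Lemma~\ref{lem3}; it simply cites \cite{Lu} for Lemmas~\ref{lem1}--\ref{lem5}. So there is no in-paper argument to compare against, and your continuity-plus-Lemma~\ref{lem2} approach is the standard one.

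Your proof of the first assertion is correct: continuity of $\psi_g$ together with the definition of $\rho(g)$ as a supremum gives $\psi_g(\rho(g))=1$ exactly as you outline. Note, however, that you only need a \emph{sequence} $r_n\uparrow\rho(g)$ with $\psi_g(r_n)\le 1$ (which the supremum definition gives directly); you do not need $\psi_g(r)\le 1$ for \emph{all} $r<\rho(g)$, and in fact that stronger claim need not hold. Your final ``delicate point'' paragraph tries to prove this unnecessary stronger statement, and the argument there does not quite work: Lemma~\ref{lem2} only yields $\psi_g(r_0)\le C(r_0/r)^{2-Q/q_0}\psi_g(r)$, and the constant $C$ may well exceed $1$, so you cannot force the right-hand side below $1$ unless $r_0\ll r$. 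Simply drop that paragraph.

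For the ``moreover'' part, your converse direction ($\psi_g(r)\sim 1\Rightarrow r\sim\rho(g)$) via Lemma~\ref{lem2} and the positivity of $2-Q/q_0$ is clean and correct. In the forward direction ($r\sim\rho(g)\Rightarrow\psi_g(r)\sim 1$) there is a small gap: Lemma~\ref{lem2} is a one-sided estimate, so it gives you the upper bound on $\psi_g(r)$ when $r\le\rho(g)$ and the lower bound when $r\ge\rho(g)$, but not the remaining two inequalities. Those follow instead from the doubling property (Lemma~\ref{lem1}): if $r\sim\rho(g)$ then $\int_{B(g,r)}V\sim\int_{B(g,\rho(g))}V$ and $r^{Q-2}\sim\rho(g)^{Q-2}$, hence $\psi_g(r)\sim\psi_g(\rho(g))=1$. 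Once you invoke Lemma~\ref{lem1} here, the argument is complete.
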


\begin{lemma}\label{lem4}
There exists $l_0>0$ such that, for any $g$ and $h$ in $\mathbb H^n$,
\begin{eqnarray*}
\frac{1}{C} \bigg ( 1+ \frac{|h^{-1} g|}{\rho(g)} \bigg )^{-l_0}
\leq \frac{\rho(h)}{\rho(g)} \leq C \bigg (1+ \frac{|h^{-1}
g|}{\rho(g)} \bigg )^{\frac{l_0}{l_0+1}}.
\end{eqnarray*}
In particular, $\rho(h) \sim \rho(g)$ if $\,|h^{-1} g|< C\,
\rho(g)$.
\end{lemma}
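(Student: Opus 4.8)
The plan is to establish the \emph{lower} bound $\rho(h)\ge C^{-1}\rho(g)\big(1+|h^{-1}g|/\rho(g)\big)^{-l_0}$ directly, and then to deduce the upper bound from it by interchanging the roles of $g$ and $h$ (legitimate since $|g^{-1}h|=|h^{-1}g|$) followed by an elementary algebraic manipulation; the last assertion is then immediate. Throughout write $R=|h^{-1}g|=d(g,h)$, $r=\rho(g)$, and, for a point $g$ and a radius $\rho>0$, set $\phi_g(\rho)=\rho^{-(Q-2)}\int_{B(g,\rho)}V$. I will use repeatedly that $\phi_g(\rho(g))=1$ (Lemma \ref{lem3}) and that, by the definition of $\rho$, $\rho(g)\ge\rho$ whenever $\phi_g(\rho)\le 1$.

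The first ingredient is a polynomial upper bound for $\phi_g$ at large scales: there are constants $C>0$ and $\gamma\ge 0$, depending only on $Q$ and the doubling constant of $V(h)\,dh$, such that $\phi_g(\rho)\le C\big(1+\rho/\rho(g)\big)^{\gamma}$ for every $g$ and every $\rho>0$. For $\rho\le\rho(g)$ this is immediate from Lemma \ref{lem2} together with $\phi_g(\rho(g))=1$. For $\rho\ge\rho(g)$ one iterates the doubling inequality of Lemma \ref{lem1} roughly $\log_2(\rho/\rho(g))$ times, starting from $\int_{B(g,\rho(g))}V=\rho(g)^{Q-2}$ (Lemma \ref{lem3}), and divides by $\rho^{Q-2}$; the number of iterations accounts for the exponent $\gamma$.

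For the lower bound, the crucial observation — and the step I expect to be the main obstacle — is that one should \emph{not} estimate $\phi_h$ at a small radius, where the inclusion $B(h,s)\subset B(g,s+R)$ loses far too much, but rather at the ``safe'' scale $r+R$, at which the balls $B(h,r+R)$ and $B(g,r+R)$ are comparable up to a bounded dilation of the radius. Indeed $B(h,r+R)\subset B(g,r+2R)\subset B(g,2(r+R))$, so the doubling property of $V(h)\,dh$ gives $\phi_h(r+R)\le C\,\phi_g(r+R)$, and the growth estimate above then yields $\phi_h(r+R)\le \Lambda:=C'\big(1+R/r\big)^{\gamma}$. Now apply Lemma \ref{lem2}, with $\beta=2-Q/q_{_0}>0$, to push this bound down to smaller scales: $\phi_h(s)\le C''\,(s/(r+R))^{\beta}\,\Lambda$ for $s<r+R$. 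Taking $s_\ast=(r+R)(C''\Lambda)^{-1/\beta}$ makes the right-hand side equal to $1$, so $\phi_h(s_\ast)\le 1$ and hence $\rho(h)\ge s_\ast$. A direct computation, using $r+R=r(1+R/r)$, gives $s_\ast=c\,\rho(g)\big(1+R/\rho(g)\big)^{1-\gamma/\beta}$; since $1-\gamma/\beta\le 1$ and $1+R/\rho(g)\ge 1$, choosing any $l_0\ge\max\{1,\gamma/\beta-1\}$ yields $\rho(h)\ge c\,\rho(g)\big(1+R/\rho(g)\big)^{-l_0}$. It is precisely this maneuver — bounding $\phi_h$ at one scale and then dropping the exponent by a power of $\beta$ — that produces the asymmetry between the exponents $-l_0$ and $l_0/(l_0+1)$ in the statement.

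To obtain the upper bound, apply the lower bound just proved with $g$ and $h$ exchanged: $\rho(g)\ge c\,\rho(h)\big(1+R/\rho(h)\big)^{-l_0}$, that is, $\rho(h)/\rho(g)\le c^{-1}\big(1+R/\rho(h)\big)^{l_0}$. Writing $u=\rho(h)/\rho(g)$ and $\sigma=R/\rho(g)$, this becomes $u^{\,l_0+1}\le c^{-1}(u+\sigma)^{l_0}$; splitting into the cases $u\ge\sigma$ (which forces $u$ to be bounded by a constant) and $u<\sigma$ (which forces $u\le C\sigma^{\,l_0/(l_0+1)}$) gives $u\le C\big(1+\sigma\big)^{\,l_0/(l_0+1)}$, i.e.\ the claimed upper bound. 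Finally, if $|h^{-1}g|<C\rho(g)$ then $1+R/\rho(g)$ lies between two absolute constants, so the two inequalities together give $\rho(h)\sim\rho(g)$.
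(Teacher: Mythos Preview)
The paper does not prove Lemma~\ref{lem4} but defers to \cite{Lu} (and ultimately to Shen \cite{Shen} in the Euclidean case); your argument is correct and is essentially the standard one found there: bound $\phi_h$ at the large scale $\rho(g)+|h^{-1}g|$ via ball inclusion and doubling, use Lemma~\ref{lem2} to descend to a scale where $\phi_h\le 1$, and then obtain the second inequality from the first by symmetry and the elementary self-improvement $u^{\,l_0+1}\le C(u+\sigma)^{l_0}\Rightarrow u\le C'(1+\sigma)^{l_0/(l_0+1)}$. Your polynomial growth estimate for $\phi_g$ is just the second inequality in Lemma~\ref{lem5} (which follows from Lemmas~\ref{lem1} and~\ref{lem3} and does not require Lemma~\ref{lem4}), so there is no circularity.
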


\begin{lemma}\label{lem5} There exists $l_1>0$ such that, for any $g \in \mathbb H^n$,
\begin{eqnarray*}
\int_{B(g, R)}\frac{V(h)}{|h^{-1}g|^{Q-2} }\, dh \leq
\frac{C}{R^{Q-2}}\int_{B(g, R)}V(h)\, dh \leq C \bigg ( 1+
\frac{R}{\rho(g)} \bigg )^{l_1}.
\end{eqnarray*}
\end{lemma}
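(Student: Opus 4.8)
The statement has two inequalities. The second one, $\frac{1}{R^{Q-2}}\int_{B(g,R)}V(h)\,dh \leq C\big(1+\frac{R}{\rho(g)}\big)^{l_1}$, is the easier of the two and I would dispose of it first. If $R \leq \rho(g)$, then by the very definition of $\rho(g)$ as a supremum, $\frac{1}{R^{Q-2}}\int_{B(g,R)}V(h)\,dh \leq 1$, which is bounded by the right-hand side. If $R > \rho(g)$, I apply Lemma \ref{lem2} in reverse: taking the small radius to be $\rho(g)$ and the large radius to be $R$, Lemma \ref{lem2} gives $\frac{1}{\rho(g)^{Q-2}}\int_{B(g,\rho(g))}V \leq C(\rho(g)/R)^{2-Q/q_0}\cdot\frac{1}{R^{Q-2}}\int_{B(g,R)}V$. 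Since the left side equals $1$ by Lemma \ref{lem3}, rearranging yields $\frac{1}{R^{Q-2}}\int_{B(g,R)}V \geq C^{-1}(R/\rho(g))^{2-Q/q_0}$, which is the wrong direction — so instead I should use the doubling property of $V(h)\,dh$ (Lemma \ref{lem1}) directly: covering $B(g,R)$ by $B(g,2^k\rho(g))$ with $2^k \sim R/\rho(g)$ and iterating Lemma \ref{lem1} a number of times comparable to $\log_2(R/\rho(g))$ gives $\int_{B(g,R)}V \leq C^{\,k}\int_{B(g,\rho(g))}V = C^{\,k}\rho(g)^{Q-2}$, and $C^{\,k} \sim (R/\rho(g))^{\log_2 C}$, so one may take $l_1 = \max\{\log_2 C,\, 0\}$ after absorbing the case $R\le\rho(g)$ into the same bound.

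For the first inequality, $\int_{B(g,R)}\frac{V(h)}{|h^{-1}g|^{Q-2}}\,dh \leq \frac{C}{R^{Q-2}}\int_{B(g,R)}V(h)\,dh$, the natural approach is a dyadic annular decomposition. Write $B(g,R)$ as the union of the annuli $A_j = \{h: 2^{-j-1}R \leq |h^{-1}g| < 2^{-j}R\}$ for $j = 0,1,2,\dots$, together with the possible point $\{g\}$ (which has measure zero). On $A_j$ the kernel $|h^{-1}g|^{-(Q-2)}$ is comparable to $(2^{-j}R)^{-(Q-2)}$, so
\begin{eqnarray*}
\int_{B(g,R)}\frac{V(h)}{|h^{-1}g|^{Q-2}}\,dh
\;\leq\; C\sum_{j=0}^{\infty} (2^{-j}R)^{-(Q-2)}\int_{B(g,2^{-j}R)}V(h)\,dh.
\end{eqnarray*}
Now I invoke Lemma \ref{lem2} with the pair of radii $r = 2^{-j}R$ and $R$ itself: each term is bounded by $C\,2^{-j(2-Q/q_0)}\cdot R^{-(Q-2)}\int_{B(g,R)}V(h)\,dh$. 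Since $q_0 > Q/2$ we have $2 - Q/q_0 > 0$, so the geometric series $\sum_j 2^{-j(2-Q/q_0)}$ converges to a finite constant, and the first inequality follows.

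The main obstacle — really the only place requiring care — is getting the exponent and the direction right in the application of Lemma \ref{lem2}, and making sure the summability condition $2 - Q/q_0 > 0$ is genuinely used (this is exactly where the hypothesis $q_0 > Q/2$ enters, and it is why one cannot merely assume $V \in B_{Q/2}$ without the self-improvement to $B_{q_0}$ noted in Section 2). A secondary bookkeeping point is that the annular sum starts at $j=0$ and the annuli exhaust $B(g,R)\setminus\{g\}$; since $V \in L^1_{\mathrm{loc}}$ and the singularity $|h^{-1}g|^{-(Q-2)}$ is locally integrable (as $Q-2 < Q$), there is no convergence issue near $g$, and the tail of the series controls the contribution of all small annuli at once. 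Everything else is routine comparison of $|h^{-1}g|$ to the dyadic scale on each annulus.
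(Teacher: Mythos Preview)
The paper does not prove Lemma~\ref{lem5} itself; it defers to \cite{Lu} for the proofs of Lemmas~\ref{lem1}--\ref{lem5}. Your argument is essentially the standard one found in that literature (and in Shen \cite{Shen} on $\mathbb{R}^n$), and it is correct in outline, with one small slip worth fixing.

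For the first inequality, your dyadic annular decomposition combined with Lemma~\ref{lem2} is exactly the expected argument and is fine as written; the point that $2-Q/q_0>0$ is precisely what makes the geometric series converge is well identified.

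For the second inequality, your treatment of the case $R>\rho(g)$ via iterated doubling (Lemma~\ref{lem1}) plus Lemma~\ref{lem3} is correct; you tacitly use $\rho(g)^{Q-2}/R^{Q-2}\le 1$ when passing from $C^k\rho(g)^{Q-2}/R^{Q-2}$ to $(R/\rho(g))^{\log_2 C}$, but that is harmless. The slip is in the case $R\le\rho(g)$: the assertion that $\frac{1}{R^{Q-2}}\int_{B(g,R)}V\le 1$ ``by the very definition of $\rho(g)$ as a supremum'' is not literally justified, because the defining set $\{r:\frac{1}{r^{Q-2}}\int_{B(g,r)}V\le 1\}$ need not be an interval, so $R<\rho(g)$ does not automatically place $R$ in that set. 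The clean fix is to apply Lemma~\ref{lem2} with the pair $(R,\rho(g))$ together with Lemma~\ref{lem3}, giving $\frac{1}{R^{Q-2}}\int_{B(g,R)}V\le C\,(R/\rho(g))^{2-Q/q_0}\le C$, which is all you need.
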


For the proofs of Lemmas \ref{lem1}$-$\ref{lem5}, we refer readers to
\cite{Lu}.

Now we turn to the estimates of kernels. Let $\Gamma(g,h, \tau)$
denote the fundamental solution for the operator $-\Delta_{\mathbb
H^n}+i\tau$, where $\tau \in \mathbb R$. For any $l>0$, there
exists $C_l>0$ such that
\begin{eqnarray}
\big| \Gamma (g, h, \tau) \big| &\leq& \frac{C_l } {\big (
1+|h^{-1}g|\, |\tau|^{\frac{1}{2}} \big )^l} \frac{1}{|h^{-1}g|^{Q-2} },\label{a3}\\
\big| \nabla_{{\mathbb H^n},g} \Gamma (g, h, \tau) \big| &\leq&
\frac{C_l } { \big (1+|h^{-1}g|\, |\tau|^{\frac{1}{2}} \big )^l
}\frac{1}{|h^{-1}g|^{Q-1} },\label{a4}
\end{eqnarray}
where $\nabla_{{\mathbb H^n},g}$ denote the gradient for
variable $g$. The estimate $(\ref{a4})$ still holds for $\nabla_{{\mathbb
H^n},h}$ instead of $\nabla_{{\mathbb H^n},g}$. The estimates
$(\ref{a3})$ and $(\ref{a4})$ are easily reduced from the
corresponding estimates of heat kernel (cf. \cite{Jerison}). We
remark that the explicit expression of $\Gamma(g, h)= \Gamma(g, h,
0)$ is obtained by Folland \cite{Folland}:
\begin{eqnarray*}
\Gamma (g, h)= \frac{2^{n-2} \Gamma (\frac{n}{2})^2}{\pi^{n+1}}
\frac{1}{|h^{-1}g|^{Q-2}}.
\end{eqnarray*}
Let $\Gamma^L (g, h, \tau)$ denote the fundamental solution for
the operator $L+i\tau$, where $\tau \in \mathbb R$. For any $l>0$,
there exists $C_l>0$ such that
\begin{eqnarray*}
\big| \Gamma^L (g, h, \tau) \big| \leq \frac{C_l}{\big
(1+|h^{-1}g|\, |\tau|^\frac{1}{2} \big )^l \big ( 1+|h^{-1}g|
(\rho(g)^{-1} \big )^l} \frac{1}{|h^{-1}g|^{Q-2}}
\end{eqnarray*}
(cf. \cite[Theorem 4.8]{Lu}). Since $\Gamma^L (g, h, \tau)=
\Gamma^L (h, g, -\tau)$, we have
\begin{eqnarray}\label{a5}
\big| \Gamma^L (g, h, \tau) \big| \leq \frac{C_l}{\big
(1+|h^{-1}g|\, |\tau|^\frac{1}{2} \big )^l \big ( 1+|h^{-1}g|
(\rho(g)^{-1}+ \rho(h)^{-1}) \big )^l} \frac{1}{|h^{-1}g|^{Q-2}}.
\end{eqnarray}

\begin{lemma}\label{lem6}
For any $l>0$, there exists $C_l>0$ such that if $\,|h^{-1}g|\leq
\rho(g)$, then
\begin{eqnarray*}
\big| \Gamma^L(g, h, \tau)- \Gamma (g, h, \tau) \big| \leq
\frac{C_l}{\big ( 1+|h^{-1}g|\, |\tau|^{\frac{1}{2}}\big )^l}
\frac{1}{\rho(g)^{\delta} |h^{-1}g|^{Q-2- \delta}},
\end{eqnarray*}
where $\delta = 2-\frac{Q}{q_{_0}} >0$.
\end{lemma}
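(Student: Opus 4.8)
The plan is to use the perturbation formula (resolvent identity) relating $\Gamma^L$ and $\Gamma$. Writing $\Gamma^L = \Gamma^L(\cdot,\cdot,\tau)$ and $\Gamma = \Gamma(\cdot,\cdot,\tau)$ for the fundamental solutions of $L+i\tau = -\Delta_{\mathbb H^n}+V+i\tau$ and of $-\Delta_{\mathbb H^n}+i\tau$, one has
\begin{eqnarray*}
\Gamma^L(g,h,\tau) - \Gamma(g,h,\tau) = -\int_{\mathbb H^n} \Gamma^L(g,u,\tau)\, V(u)\, \Gamma(u,h,\tau)\, du.
\end{eqnarray*}
First I would justify this identity rigorously (it is the standard second-resolvent formula, valid since $V\in B_{q_0}\subset L^{Q/2}_{\mathrm{loc}}$ and both operators generate contraction semigroups). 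Then the whole lemma reduces to estimating the right-hand integral under the hypothesis $|h^{-1}g|\le \rho(g)$, using the pointwise bound \eqref{a3} for $\Gamma$, the bound \eqref{a5} for $\Gamma^L$, and the properties of $\rho$ and $V$ from Lemmas \ref{lem1}--\ref{lem5}.

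The key steps, in order, are as follows. Set $R=|h^{-1}g|$. I would first discard the oscillation factors by noting that it suffices to prove the estimate for $\tau=0$ — the factor $\big(1+R|\tau|^{1/2}\big)^{-l}$ on the left is produced by keeping the corresponding factor from \eqref{a5} for $\Gamma^L$ (with $u$ near $g$) and from \eqref{a3} for $\Gamma$ (with $u$ near $h$), at the cost of enlarging $l$; since Lemma \ref{lem4} gives $\rho(u)\sim\rho(g)$ on the relevant region, these factors are comparable to $\big(1+R|\tau|^{1/2}\big)^{-l}$ up to constants. Second, with $\tau=0$ I would split the $u$-integral into three pieces: $B(g,R/2)$, $B(h,R/2)$, and the complement. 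On the first region $|u^{-1}h|\sim R$, so $\Gamma(u,h)\lesssim R^{-(Q-2)}$ and the contribution is bounded by $R^{-(Q-2)}\int_{B(g,R/2)} \Gamma^L(g,u)\,V(u)\,du$; here I would use the decay factor $\big(1+|u^{-1}g|\rho(g)^{-1}\big)^{-l}$ in \eqref{a5} is harmless ($\le 1$) and apply Lemma \ref{lem5} to bound $\int_{B(g,R/2)} V(u)\,|u^{-1}g|^{-(Q-2)}\,du \lesssim R^{-(Q-2)}\int_{B(g,R)}V \lesssim \big(1+R/\rho(g)\big)^{l_1}$, then absorb this into the desired $\rho(g)^{-\delta}R^{-(Q-2)+\delta}$ using Lemma \ref{lem2} (which converts the $B_{q_0}$-average at scale $R$ into one at scale $\rho(g)$, where it is $\sim 1$ by Lemma \ref{lem3}) together with the hypothesis $R\le\rho(g)$. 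The second region is symmetric, using $\Gamma^L(g,u)\lesssim |u^{-1}g|^{-(Q-2)}\sim R^{-(Q-2)}$ and the same Lemma \ref{lem5}/\ref{lem2} argument for $\int_{B(h,R/2)} V(u)\,|u^{-1}h|^{-(Q-2)}\,du$, noting $\rho(h)\sim\rho(g)$ by Lemma \ref{lem4}. For the third region, $|u^{-1}g|\ge R/2$ and $|u^{-1}h|\ge R/2$, I would use the full decay $\big(1+|u^{-1}g|\rho(g)^{-1}\big)^{-l}$ from \eqref{a5} with $l$ large to make the integral $\int_{|u^{-1}g|\ge R/2} \big(1+|u^{-1}g|/\rho(g)\big)^{-l}\,|u^{-1}g|^{-(Q-2)}\,|u^{-1}h|^{-(Q-2)}\,V(u)\,du$ converge; a dyadic decomposition in $|u^{-1}g|$, Lemma \ref{lem5} on each annulus, and summability of a geometric-type series in $2^{-k\delta}$ (using $l>l_1$) then yield the bound $\rho(g)^{-\delta}R^{-(Q-2)+\delta}$.

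The main obstacle I expect is the bookkeeping in the third region: one must choose $l$ large enough (relative to $l_0$ and $l_1$ from Lemmas \ref{lem4} and \ref{lem5}) that the $\rho$-decay of $\Gamma^L$ beats the growth of the local $V$-mass, while simultaneously tracking how $\rho(u)$ varies over a region where $|u^{-1}g|$ can be much larger than $\rho(g)$ — there Lemma \ref{lem4} only gives a polynomial comparison, not $\rho(u)\sim\rho(g)$. Handling the two-sided singularity $|u^{-1}g|^{-(Q-2)}|u^{-1}h|^{-(Q-2)}$ away from both poles, and extracting precisely the exponent $\delta=2-Q/q_0$ rather than something weaker, is the delicate point; it is exactly here that the $B_{q_0}$ hypothesis (through Lemma \ref{lem2}) is used in an essential, quantitative way. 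The oscillation factor is by comparison routine, since on each relevant region the arguments $|u^{-1}g|$ or $|u^{-1}h|$ are comparable to $R=|h^{-1}g|$, so no cancellation in $\tau$ is needed.
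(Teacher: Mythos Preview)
Your proposal is correct and follows essentially the same route as the paper. The paper uses the resolvent identity in the form
\[
\Gamma^L(g,h,\tau)-\Gamma(g,h,\tau) = -\int_{\mathbb H^n}\Gamma(g,w,\tau)\,V(w)\,\Gamma^L(w,h,\tau)\,dw,
\]
i.e.\ with the roles of $\Gamma$ and $\Gamma^L$ swapped relative to yours; this is harmless since both forms of the second resolvent formula hold, and by Lemma~\ref{lem4} one has $\rho(h)\sim\rho(g)$ under the hypothesis $|h^{-1}g|\le\rho(g)$. The paper then performs the same three-region split $I_1+I_2+I_3$, carries the factor $(1+R|\tau|^{1/2})^{-l}$ through each piece rather than reducing to $\tau=0$ first (your reduction is equally valid), and for the far region $I_3$ splits once more at $|h^{-1}w|=\rho(h)$, using H\"older and the $B_{q_0}$ condition directly on the inner shell and Lemma~\ref{lem5} on the outer shell, in place of your purely dyadic treatment. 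Your concern about tracking $\rho(u)$ in the far region is unnecessary: since \eqref{a5} already contains the factor $(1+|u^{-1}g|\rho(g)^{-1})^{-l}$, you may use $\rho(g)$ throughout and never need to compare it to $\rho(u)$.
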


\begin{proof} Note that
\begin{eqnarray*}-\Delta_{\mathbb H^n, g} \big( \Gamma^L(g, h, \tau)-\Gamma (g, h,
\tau) \big) +i\tau \big( \Gamma^L(g, h, \tau)-\Gamma (g, h, \tau)
\big)= -V(g)\Gamma^L(g, h, \tau),
\end{eqnarray*}
where $-\Delta_{\mathbb H^n, g}$ denotes the sub-Laplacian for
variable $g$. Since $\Gamma (g, h, \tau)$ is the fundamental
solution for $-\Delta_{\mathbb H^n }+i\tau$, we have
\begin{eqnarray}\label{a6}
\Gamma^L(g, h, \tau) -\Gamma (g, h, \tau)= -\int_{\mathbb H^n}
\Gamma (g, w, \tau)V(w)\Gamma^L(w, h, \tau)\, dw.
\end{eqnarray}
Let $R=|h^{-1}g| \leq \rho (g)$. By $(\ref{a3})$ and $(\ref{a5})$,
\begin{eqnarray*}
\big| \Gamma^L(g, h, \tau) -\Gamma(g, h, \tau) \big| &\leq&
\int_{\mathbb H^n} \frac{C_l}{\big ( 1+|g^{-1}
w|\, |\tau|^{\frac{1}{2}}\big )^l |g^{-1}w|^{Q-2}} \\
& & \qquad \cdot\frac{V(w)\, dw}{\big ( 1+|h^{-1}w|\,
|\tau|^{\frac{1}{2}}\big )^l \big ( 1+|h^{-1}w|\, \rho(h)^{-1}
\big)^l |h^{-1} w|^{Q-2}}\\
&=& \int_{|g^{-1}w|< \frac{R}{2}} + \int_{|h^{-1}w|< \frac{R}{2}}
+ \int_{|g^{-1}w|\geq \frac{R}{2},\, |h^{-1}w|\geq \frac{R}{2} }\\
&=& I_1+I_2+I_3.
\end{eqnarray*}
By Lemma \ref{lem5} and Lemma \ref{lem2}, we have
\begin{eqnarray*}
I_1 &\leq& \frac{C_l}{\big ( 1+ R\, |\tau|^{\frac{1}{2}}\big )^l
R^{Q-2}} \int_{B(g, \frac{R}{2})} \frac{V(w)\, dw}{|g^{-1}w|^{Q-2}}\\
&\leq& \frac{C_l}{\big ( 1+ R\, |\tau|^{\frac{1}{2}}\big )^l
R^{Q-2}} \frac{1}{R^{Q-2}} \int_{B(g, \frac{R}{2})} V(w)\, dw\\
&\leq& \frac{C_l}{\big ( 1+ R\, |\tau|^{\frac{1}{2}}\big )^l
R^{Q-2}} \bigg ( \frac{R}{\rho(g)} \bigg )^{2- \frac{Q}{q_{_0}}}.
\end{eqnarray*}
Similarly,
\begin{eqnarray*}
I_2 \leq \frac{C_l}{\big ( 1+ R\, |\tau|^{\frac{1}{2}}\big )^l
R^{Q-2}} \bigg ( \frac{R}{\rho(g)} \bigg )^{2- \frac{Q}{q_{_0}}}.
\end{eqnarray*}
Note that $|g^{-1}w| \sim |h^{-1}w|$ when $|g^{-1}w|\geq
\frac{R}{2}$ and $|h^{-1}w|\geq \frac{R}{2}$. It yields
\begin{eqnarray*}
I_3 &\leq& \frac{C_l}{\big ( 1+ R\, |\tau|^{\frac{1}{2}}\big )^l}
\int_{|h^{-1}w|\geq \frac{R}{2}} \frac{V(w)\, dw}{\big (
1+|h^{-1}w|\, \rho(h)^{-1} \big )^l |h^{-1}w|^{2(Q-2)}}\\
&\leq& \frac{C_l}{\big ( 1+ R\, |\tau|^{\frac{1}{2}}\big )^l}
\bigg ( \int_{\rho(h)>|h^{-1}w|\geq \frac{R}{2}} \frac{V(w)\,
dw}{|h^{-1}w|^{2(Q-2)}} + \rho(h)^l  \int_{|h^{-1}w|\geq \rho(h)}
\frac{V(w)\, dw}{|h^{-1}w|^{2(Q-2)+l}} \bigg ).
\end{eqnarray*}
We may assume $\rho(h)> \frac{R}{2}$. Otherwise,
$\rho(g) \sim \rho(h) \sim |h^{-1}g|$ and Lemma \ref{lem6} is
obviously true. By H\"older's inequality, $B_{q_{_0}}$ condition
and Lemma \ref{lem3}, we obtain
\begin{eqnarray*}
&&\int_{\rho(h)>|h^{-1}w|\geq \frac{R}{2}} \frac{V(w)\,dw}
  {|h^{-1}w|^{2(Q-2)}}\\
&&\qquad\leq C \bigg ( \int_{B(h, \rho(h))}
  V(w)^{q_{_0}}\, dw\bigg )^\frac{1}{q_{_0}} \bigg (
  \int_{\frac{R}{2}}^{\rho(h)} t^{-2(Q-2)q'_{_0}
  +Q-1}\, dt \bigg )^\frac{1}{q'_{_0}}\\
&&\qquad\leq C\, \rho(h)^{\frac{Q}{q_{_0}}-2} R^{-2(Q-2)+
  \frac{Q}{q'_{_0}}}\\
&&\qquad = \frac{C}{R^{Q-2}} \bigg ( \frac{R}{\rho(h)}
  \bigg )^{2- \frac{Q}{q_{_0}}}.
\end{eqnarray*}
Using Lemma \ref{lem5} and taking $l$ sufficiently large, we
obtain
\begin{eqnarray*}
\rho(h)^l  \int_{|h^{-1}w|\geq \rho(h)} \frac{V(w)\,
dw}{|h^{-1}w|^{2(Q-2)+l}} &\leq& C_l\, \rho(h)^l
\sum_{j=1}^{\infty}
(2^j \rho(h))^{-2(Q-2)-l} \int_{B(h, 2^j \rho(h))} V(w)\, dw\\
&\leq& \frac{C_l}{\rho(h)^{Q-2}} \sum_{j=1}^{\infty}
2^{-(l-l_1+Q-2)j} \\
&\leq& \frac{C_l}{\rho(h)^{Q-2}} \\
&\leq& \frac{C_l}{R^{Q-2}} \bigg (
\frac{R}{\rho(h)} \bigg )^{2- \frac{Q}{q_{_0}}}.
\end{eqnarray*}
By Lemma \ref{lem4}, $\rho(h) \sim \rho(g)$ when $|h^{-1}g|\leq
\rho(g)$. Lemma \ref{lem6} is proved.
\end{proof}

\begin{lemma}\label{lem7}
For any $l>0$, there exists $C_l>0$ such that
\begin{eqnarray}\label{a7}
K^L_s(g,h) \leq \frac{C_l}{\big ( 1+|h^{-1}g| (\rho(g)^{-1}+
\rho(h)^{-1}) \big )^l} \frac{1}{|h^{-1}g|^{Q}}.
\end{eqnarray}
Let $\,A \geq 1$ be a fixed constant. If $\,|h^{-1}g|\leq A\,
\rho(g)$, then
\begin{eqnarray}\label{a8}
\big| K^L_s(g,h)-H_s(g,h) \big| \leq \frac{C}{\rho(g)^{\delta}
|h^{-1}g|^{Q- \delta}},
\end{eqnarray}
where $\delta = 2-\frac{Q}{q_{_0}} >0$.
\end{lemma}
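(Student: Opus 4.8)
The plan is to deduce both estimates from the bounds on fundamental solutions in $(\ref{a5})$ and Lemma \ref{lem6}, via the Fourier representation of the heat kernel by the resolvent kernel. Fix $g\ne h$ and put $R=|h^{-1}g|$. Since on the level of kernels $(L+i\tau)^{-1}=\int_0^\infty e^{-is\tau}e^{-sL}\,ds$, one has
\[
\Gamma^L(g,h,\tau)=\int_0^\infty e^{-is\tau}K^L_s(g,h)\,ds,\qquad \tau\in\mathbb R,
\]
so that $\Gamma^L(g,h,\cdot)$ is the Fourier transform of $s\mapsto K^L_s(g,h)\mathbf 1_{(0,\infty)}(s)$. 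By $(\ref{a2})$ and $(\ref{a1})$ this function is continuous and dominated by $Cs^{-Q/2}e^{-A_0R^2/s}$, hence lies in $L^1(\mathbb R,ds)$ since $Q>2$; and by $(\ref{a5})$, taking the exponent larger than $2$, the function $\tau\mapsto\Gamma^L(g,h,\tau)$ lies in $L^1(\mathbb R,d\tau)$. Fourier inversion then yields, for every $s>0$,
\[
K^L_s(g,h)=\frac1{2\pi}\int_{-\infty}^{\infty}e^{is\tau}\,\Gamma^L(g,h,\tau)\,d\tau,
\]
and likewise $H_s(g,h)=\frac1{2\pi}\int_{-\infty}^{\infty}e^{is\tau}\,\Gamma(g,h,\tau)\,d\tau$ for the free heat kernel. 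A little care is needed here, because the resolvent identity at purely imaginary spectral parameter is only conditionally convergent; one passes to the limit from $i\tau+\varepsilon$, $\varepsilon\downarrow0$, using the integrability just recorded.

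Granting this representation, $(\ref{a7})$ is immediate. We may assume $l>2$. Then $(\ref{a5})$ gives
\[
K^L_s(g,h)\le\frac1{2\pi}\int_{-\infty}^{\infty}\big|\Gamma^L(g,h,\tau)\big|\,d\tau\le\frac{C_l}{\big(1+R(\rho(g)^{-1}+\rho(h)^{-1})\big)^l}\,\frac1{R^{Q-2}}\int_{-\infty}^{\infty}\frac{d\tau}{(1+R|\tau|^{1/2})^l},
\]
and the substitution $\tau\mapsto R^{-2}\tau$ shows $\int_{\mathbb R}(1+R|\tau|^{1/2})^{-l}\,d\tau=C_lR^{-2}$ since $l>2$; combining the two powers of $R$ produces the factor $R^{-Q}$ asserted in $(\ref{a7})$.

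For $(\ref{a8})$ I would first dispose of the range $\rho(g)<R\le A\rho(g)$, where Lemma \ref{lem4} gives $R\sim\rho(g)$: then $(\ref{a2})$ and $(\ref{a1})$ together with $\sup_{s>0}s^{-Q/2}e^{-A_0R^2/s}\le CR^{-Q}$ give $\big|K^L_s(g,h)-H_s(g,h)\big|\le 2H_s(g,h)\le CR^{-Q}\le C\rho(g)^{-\delta}R^{-(Q-\delta)}$. In the remaining range $R\le\rho(g)$ I would subtract the two Fourier representations,
\[
K^L_s(g,h)-H_s(g,h)=\frac1{2\pi}\int_{-\infty}^{\infty}e^{is\tau}\big(\Gamma^L(g,h,\tau)-\Gamma(g,h,\tau)\big)\,d\tau,
\]
insert the bound of Lemma \ref{lem6} with exponent $l>2$, and integrate in $\tau$ exactly as above; this produces the factor $\big(\int_{\mathbb R}(1+R|\tau|^{1/2})^{-l}\,d\tau\big)\cdot\rho(g)^{-\delta}R^{-(Q-2-\delta)}\sim R^{-2}\rho(g)^{-\delta}R^{-(Q-2-\delta)}=\rho(g)^{-\delta}R^{-(Q-\delta)}$, which is $(\ref{a8})$.

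The only genuinely delicate point is the justification of the Fourier representation of $K^L_s$ (and of $H_s$): the interchange of the $s$- and $\tau$-integrations and the conditional convergence of the resolvent integral at imaginary spectral parameter. Once that is in hand, both estimates reduce to the elementary one-dimensional computation $\int_{\mathbb R}(1+R|\tau|^{1/2})^{-l}\,d\tau\sim R^{-2}$ for $l>2$, combined with $(\ref{a5})$ and Lemma \ref{lem6} respectively.
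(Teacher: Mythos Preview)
Your proof is correct and follows essentially the same route as the paper: both derive the Fourier representation $K^L_s=\frac1{2\pi}\int e^{is\tau}\Gamma^L(\cdot,\cdot,\tau)\,d\tau$ and then obtain $(\ref{a7})$ by integrating $(\ref{a5})$ in $\tau$, and $(\ref{a8})$ by integrating Lemma~\ref{lem6} for $R\le\rho(g)$ together with the trivial bound $|K^L_s-H_s|\le 2H_s\le CR^{-Q}$ for $\rho(g)<R\le A\rho(g)$. You simply make explicit the justification of Fourier inversion and the computation $\int_{\mathbb R}(1+R|\tau|^{1/2})^{-l}\,d\tau\sim R^{-2}$ that the paper leaves to the reader.
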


\begin{proof}
Note that
\begin{eqnarray*}
K^L_s(g,h)&=& \frac{1}{2 \pi} \int_{-\infty}^{\infty} e^{is \tau}
\Gamma^L(g, h, \tau)\, d\tau,\\
H_s(g,h)&=& \frac{1}{2 \pi} \int_{-\infty}^{\infty} e^{is \tau}
\Gamma (g, h, \tau)\, d\tau.
\end{eqnarray*}
Then $(\ref{a7})$ follows from $(\ref{a5})$ by integration. By the same
way, $(\ref{a8})$ follows from Lemma \ref{lem6} when $A=1$. Since
\begin{eqnarray*}
\big| K^L_s(g,h)-H_s(g,h) \big| \leq 2 H_s(g,h) \leq
\frac{C}{|h^{-1}g|^{Q}},
\end{eqnarray*}
$(\ref{a8})$ still holds for $\,A >1$.
\end{proof}

Let
\begin{eqnarray*}
R_j= X_j (-\Delta_{\mathbb{H}^n})^{-\frac{1}{2}},\qquad j=1,
\cdots, 2n,
\end{eqnarray*}
be the usual Riesz transforms with the convolution kernels
$R_j(g)$. We denote the kernel of $R^L_j$ by $R^L_j(g,h)$ and
write $R_j(g,h)=R_j(h^{-1}g)$. Let
\begin{eqnarray*}
\mathcal{R}^L=(R^L_1, \cdots, R^L_{2n})= \nabla_{\mathbb
H^n}L^{-\frac{1}{2}}, \qquad \mathcal{R}=(R_1, \cdots, R_{2n})=
\nabla_{\mathbb H^n}(-\Delta_{\mathbb{H}^n})^{-\frac{1}{2}}.
\end{eqnarray*}
The kernels of $\mathcal{R}^L$ and $\mathcal{R}$ are denoted by
$\mathcal{R}^L(g,h)$ and $\mathcal{R}(g,h)$, respectively. By
functional calculus, we have
\begin{eqnarray}\label{a9}
L^{-\frac{1}{2}}= \frac{1}{2\pi} \int_{\mathbb
R}(-i\tau)^{-\frac{1}{2}}(L+i\tau)^{-1}\, d\tau.
\end{eqnarray}
Thus,
\begin{eqnarray}\label{a10}
\mathcal{R}^L(g,h)= \frac{1}{2\pi} \int_{\mathbb
R}(-i\tau)^{-\frac{1}{2}}\nabla_{\mathbb H^n,g}\Gamma^L (g, h,
\tau)\, d\tau.
\end{eqnarray}
Similarly,
\begin{eqnarray}\label{a11}
\mathcal{R}(g,h)= \frac{1}{2\pi} \int_{\mathbb
R}(-i\tau)^{-\frac{1}{2}}\nabla_{\mathbb H^n,g}\Gamma (g, h,
\tau)\, d\tau.
\end{eqnarray}

\begin{lemma}\label{lem8}
Let $A>0$ be a fixed constant. Then
\begin{eqnarray*}
\int_{|h^{-1}g| \geq A\, \rho(h)} \big| R^L_j(g, h) \big|\, dg
\leq C.
\end{eqnarray*}
\end{lemma}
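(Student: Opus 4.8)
The goal is to bound the $L^1$-mass of the kernel $R^L_j(g,h)$ over the region where $g$ is far from $h$ on the scale of $\rho(h)$. The natural starting point is the functional-calculus representation $(\ref{a10})$,
$$\mathcal{R}^L(g,h)= \frac{1}{2\pi} \int_{\mathbb R}(-i\tau)^{-\frac{1}{2}}\nabla_{\mathbb H^n,g}\Gamma^L (g, h,\tau)\, d\tau,$$
combined with the decay estimate $(\ref{a4})$-type bound for $\nabla_{\mathbb H^n,g}\Gamma^L$. However, $(\ref{a4})$ and $(\ref{a5})$ are stated separately for $\nabla\Gamma$ and for $\Gamma^L$; what one needs here is a gradient estimate for $\Gamma^L$ itself, namely
$$\big| \nabla_{\mathbb H^n,g}\Gamma^L (g, h, \tau) \big| \leq \frac{C_l}{\big(1+|h^{-1}g|\,|\tau|^{1/2}\big)^l \big(1+|h^{-1}g|\rho(h)^{-1}\big)^l}\,\frac{1}{|h^{-1}g|^{Q-1}},$$
which follows from $(\ref{a4})$ together with the perturbation identity $(\ref{a6})$ (differentiated in $g$) and the kernel bounds already at hand, exactly as in Shen \cite{Shen}. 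I would first record this gradient bound (it is essentially Lemma~\ref{lem6}'s differentiated analogue), then feed it into $(\ref{a10})$.

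The second step is to integrate $(\ref{a10})$ in $\tau$. Splitting the $\tau$-integral at $|\tau|^{1/2}=|h^{-1}g|^{-1}$ and using $|(-i\tau)^{-1/2}|=|\tau|^{-1/2}$ together with the decay factor $(1+|h^{-1}g|\,|\tau|^{1/2})^{-l}$ for $l\ge 2$, the $\tau$-integral produces an extra factor $|h^{-1}g|^{-1}$. This yields the pointwise estimate
$$\big| R^L_j(g,h) \big| \leq \frac{C_l}{\big(1+|h^{-1}g|\rho(h)^{-1}\big)^l}\,\frac{1}{|h^{-1}g|^{Q}}$$
for every $l>0$ — the Riesz-transform analogue of $(\ref{a7})$. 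I would isolate this as the key intermediate inequality.

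The final step is a routine annular decomposition. Writing the region $\{|h^{-1}g|\ge A\rho(h)\}$ as $\bigcup_{k\ge 0}\{2^kA\rho(h)\le |h^{-1}g|<2^{k+1}A\rho(h)\}$, on the $k$-th annulus the volume is $\sim (2^kA\rho(h))^{Q}$ while the kernel is bounded by $C_l\,(2^kA)^{-l}\rho(h)^{-Q}\cdot 2^{-kQ}$, using the display above with the decay factor $(1+|h^{-1}g|\rho(h)^{-1})^{-l}\le C(2^kA)^{-l}$. Hence the $k$-th piece contributes $\lesssim (2^kA)^{-l+0}$; taking $l>0$ (say $l=1$) and summing the geometric series in $k$ gives the bound $C$, independent of $g$ and $h$ — and in fact of $A\ge 1$, since the sum starts with $A^{-l}\le 1$.

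The main obstacle is Step~1: one must legitimately upgrade the separate bounds $(\ref{a4})$ and $(\ref{a5})$ to a \emph{joint} gradient estimate for $\Gamma^L$ carrying \emph{both} the oscillatory decay in $|\tau|^{1/2}|h^{-1}g|$ and the Schrödinger decay in $|h^{-1}g|/\rho(h)$, with the correct singularity $|h^{-1}g|^{1-Q}$. This is done by differentiating the resolvent identity $(\ref{a6})$ in $g$, estimating $\nabla_g\Gamma$ near the diagonal via $(\ref{a4})$ and $\Gamma^L$ away from it via $(\ref{a5})$, and splitting the $w$-integral into the pieces $|g^{-1}w|<|h^{-1}g|/2$, $|h^{-1}w|<|h^{-1}g|/2$, and the complementary region, invoking Lemmas~\ref{lem1}-\ref{lem5} (doubling, the scaling Lemma~\ref{lem2}, and the $\rho$-comparison Lemma~\ref{lem4}) precisely as in the proof of Lemma~\ref{lem6}. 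Once this gradient bound is in place, Steps~2 and~3 are mechanical.
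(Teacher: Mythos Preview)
Your Step~1 claims the clean gradient bound
\[
\big| \nabla_{\mathbb H^n,g}\Gamma^L (g, h, \tau) \big| \leq \frac{C_l}{\big(1+|h^{-1}g|\,|\tau|^{1/2}\big)^l \big(1+|h^{-1}g|\rho(h)^{-1}\big)^l}\,\frac{1}{|h^{-1}g|^{Q-1}},
\]
but this is too strong under the standing hypothesis $V\in B_{q_0}$ with $q_0<Q$, and it does not follow from the argument you sketch. Differentiating $(\ref{a6})$ in $g$ gives
\[
\nabla_g\Gamma^L(g,h,\tau)=\nabla_g\Gamma(g,h,\tau)-\int \nabla_g\Gamma(g,w,\tau)\,V(w)\,\Gamma^L(w,h,\tau)\,dw,
\]
and the \emph{first} term $\nabla_g\Gamma(g,h,\tau)$, controlled only by $(\ref{a4})$, carries no $\rho(h)^{-1}$ decay whatsoever. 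Since the free Riesz kernel $|R_j(h^{-1}g)|\sim|h^{-1}g|^{-Q}$ is not integrable over $\{|h^{-1}g|\ge A\rho(h)\}$, you cannot absorb this term. In Shen's work the clean bound you cite holds only when $V\in B_q$ with $q\ge Q$; below that threshold the correct gradient estimate retains an additional $V$-dependent term.

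The paper obtains exactly this weaker bound, namely $(\ref{a12})$:
\[
\big| R^L_j(g,h) \big|\le \frac{C_l}{(1+|h^{-1}g|\rho(h)^{-1})^l}\Bigl(\frac{1}{|h^{-1}g|^{Q-1}}\int_{B(g,\frac{|h^{-1}g|}{2})}\frac{V(w)\,dw}{|g^{-1}w|^{Q-1}}+\frac{1}{|h^{-1}g|^Q}\Bigr),
\]
derived not by differentiating $(\ref{a6})$ but via a Caccioppoli-type representation: one writes $u=\Gamma^L(\cdot,h,\tau)$, cuts off on a ball $B(g_0,2R)$ with $R\sim|h^{-1}g_0|$, and represents $u\phi$ through the free fundamental solution. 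The $\rho(h)$-decay then comes from $(\ref{a5})$ applied to $u$ itself. The extra fractional-integral term in $(\ref{a12})$ cannot be discarded pointwise; instead the paper estimates the $L^{p_0}$ norm of $R^L_j(\cdot,h)$ on each dyadic annulus $\{2^{k-1}r\le|h^{-1}g|<2^kr\}$ using the $L^{q_0}\to L^{p_0}$ boundedness of the Riesz potential together with the $B_{q_0}$ condition and Lemma~\ref{lem5}, and then passes to $L^1$ by H\"older. Your dyadic Step~3 survives in spirit, but it must be run in $L^{p_0}$ rather than $L^\infty$, and it relies on $(\ref{a12})$ rather than on the clean size bound you posited.
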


\begin{proof}
Let us fix $g_0$ and $h$ such that $R=\frac{|h^{-1}g_0|}{4}>0$.
Let $u(g)=\Gamma^L(g, h, \tau)$. Then $\,u\,$ satisfies the
equation $-\Delta_{\mathbb H^n}u+(V+i\tau)u=0$ in $B(g_0, 2R)$.
Take $\phi\in C^\infty_c(B(g_0, 2R))$ such that $\phi \equiv 1$ on
$B(g_0, R)$, $0 \leq \phi \leq 1$, $|\nabla_{\mathbb H^n} \phi|
\leq \frac{C}{R}$, and $|\nabla^2_{\mathbb H^n} \phi| \leq
\frac{C}{R^2}$. For $g'\in B(g_0, R)$, we have
\begin{eqnarray*}
u(g')&=& \int_{\mathbb H^n}\Gamma(g', g,
\tau)(-\Delta_{\mathbb H^n}+i\tau) (u\phi)(g)\, dg\\
&=&\int_{\mathbb H^n}\Gamma(g', g, \tau) \big( -V(g)
u(g)\phi(g)-2\nabla_{\mathbb H^n}u(g)\cdot\nabla_{\mathbb
H^n}\phi(g)-u(g) \Delta_{\mathbb H^n}\phi(g)\big)\, dg\\
&=&\int_{\mathbb H^n}\Gamma(g', g, \tau) \big( -V(g)u(g)\phi(g)
+ u(g)\Delta_{\mathbb H^n}\phi(g)\big )\, dg\\
& & +\,2\int_{\mathbb H^n} u(g)\nabla_{\mathbb
H^n,g} \Gamma(g', g, \tau)\cdot\nabla_{\mathbb H^n}\phi(g)\, dg.
\end{eqnarray*}
By $(\ref{a4})$,
\begin{eqnarray*}
\big| \nabla_{\mathbb H^n}u(g_0) \big| &\leq& C\int_{B(g_0, 2R)}
\frac{V(g)|u(g)|}{|g^{-1}_0 g|^{Q-1}}\, dg
+ \frac{C}{R^{Q+1}} \int_{B(g_0, 2R)}|u(g)|\, dg\\
&\leq& C \sup_{B(g_0, 2R)}|u(g)| \bigg( \int_{B(g_0, 2R)}
\frac{V(g)\, dg}{|g^{-1}_0 g|^{Q-1}} + \frac{1}{R} \bigg).
\end{eqnarray*}
Since $|h^{-1}g| \sim R$ for any $g \in B(g_0, 2R)$, by $(\ref{a5})$,
\begin{equation*}
\big| \nabla_{\mathbb H^n, g}\Gamma^L (g_0, h, \tau) \big| \leq
\frac{C_l}{(1+R\, |\tau|^{\frac{1}{2}})^l \big ( 1+R\,
\rho(h)^{-1} \big )^l} \bigg( \frac{1}{R^{Q-2}}\int_{B(g_0, 2R)}
\frac{V(g)\, dg}{|g^{-1}_0 g|^{Q-1}} + \frac{1}{R^{Q-1}}\bigg).
\end{equation*}
In view of (\ref{a10}), by integration we get
\begin{equation*}
\big| \mathcal{R}^L(g_0, h) \big| \leq \frac{C_l}{\big (1+R\,
\rho(h)^{-1} \big )^l}\bigg( \frac{1}{R^{Q-1}}\int_{B(g_0, 2R)}
\frac{V(g)\, dg}{|g^{-1}_0 g|^{Q-1}}+\frac{1}{R^{Q}}\bigg),
\end{equation*}
which means
\begin{equation}\label{a12}
\big| R^L_j(g,h) \big|
\leq \frac{C_l}{\big (1+ |h^{-1}g|\,
\rho(h)^{-1} \big )^l} \bigg( \frac{1}{|h^{-1}g|^{Q-1}}
\int_{B \big( g,\frac{\,|h^{-1}g|}{2} \big)}\frac{V(w)\, dw}
{|g^{-1}w|^{Q-1}} + \frac{1}{|h^{-1}g|^{Q}} \bigg).
\end{equation}

Set $r= A\, \rho(h)$. By $(\ref{a12})$ and the boundedness of
fractional integrals (cf. \cite[Proposition 6.2]{Folland-Stein}),
for $k\geq 1$ and $\frac{1}{p_{_0}}=\frac{1}{q_{_0}}-\frac{1}{Q}$, we get
\begin{eqnarray*}
&&\bigg( \int_{2^{k-1}r \leq |h^{-1}g|<2^k r} \big | R^L_j(g, h)
\big |^{p_{_0}}\, dg \bigg)^{\frac{1}{p_{_0}}}\\
&&\qquad \leq C_l\, 2^{-kl} \left ( \frac{1}{(2^k r)^{Q-1}} \bigg(
\int_{|h^{-1}g|<2^{k+1} r} V(g)^{q_{_0}}\, dg \bigg
)^{\frac{1}{q_{_0}}}+ (2^kr)^{\frac{Q}{p_{_0}}-Q}\right )\\
&&\qquad \leq C_l\, 2^{-kl}\, (2^kr)^{\frac{Q}{p_{_0}}-Q} \bigg(
\frac{1}{(2^{k+1} r)^{Q-2}} \int_{|h^{-1}
g|<2^{k+1} r} V(g)\, dg + 1 \bigg)\\
&&\qquad \leq C_l\, 2^{-kl}\, (2^kr)^{\frac{Q}{p_{_0}}-Q} \big (
2^{k\,l_1} + 1 \big ) \leq C\, 2^{-k}\, (2^k r)^{-\frac{Q}{p'_{_0}}}
\end{eqnarray*}
provided $l$ large enough, where we used the $B_{q_{_0}}$
condition for the second inequality and Lemma \ref{lem5} for the
third inequality .

Then by H\"{o}lder's inequality,
\begin{eqnarray*}
\int_{|h^{-1}g|\geq r} \big| R^L_j(g, h) \big|\, dg
&\leq& C\, \sum^\infty_{k=1} \bigg( \int_{2^{k-1}r \leq |h^{-1}g|<2^k r}
\big| R^L_j(g, h) \big|^{p_{_0}}\, dg
\bigg)^{\frac{1}{p_{_0}}} (2^k r)^{\frac{Q}{p'_{_0}}}\\
&\leq & C\, \sum^\infty_{k=1} 2^{-k} = C;
\end{eqnarray*}
the lemma is proved.
\end{proof}

\begin{lemma}\label{lem9} Let $A>0$ be a fixed constant. Then
\begin{eqnarray*}
\int_{B(h, A \rho(h))} \big| R^L_j(g, h)- R_j(g, h) \big|\, dg
\leq C .
\end{eqnarray*}
\end{lemma}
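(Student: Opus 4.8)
The plan is to reduce the lemma to a pointwise estimate for the difference of the two kernels on $B(h,A\rho(h))$, of the same shape as $(\ref{a12})$ except that the non‑integrable diagonal term $|h^{-1}g|^{-Q}$ is improved to $\rho(g)^{-\delta}|h^{-1}g|^{-Q+\delta}$, where $\delta=2-\frac{Q}{q_{_0}}>0$, while the $V$‑dependent term of $(\ref{a12})$ survives unchanged. Precisely, the claim is that for $g\in B(h,A\rho(h))$, $g\neq h$,
\begin{equation*}
\big| R^L_j(g,h)-R_j(g,h) \big| \leq \frac{C}{|h^{-1}g|^{Q-1}} \int_{B(g,\,|h^{-1}g|/2)} \frac{V(w)\,dw}{|g^{-1}w|^{Q-1}} + \frac{C}{\rho(g)^{\delta}\,|h^{-1}g|^{Q-\delta}},
\end{equation*}
the constant $C$ allowed to depend on $A$. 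Granting this, integrate over $B(h,A\rho(h))$: by Lemma \ref{lem4} we have $\rho(g)\sim\rho(h)$ on this ball, so the second term contributes at most $C\rho(h)^{-\delta}\int_{B(h,A\rho(h))}|h^{-1}g|^{-Q+\delta}\,dg\leq C$ because $\delta>0$; for the first term I would apply Fubini, using that $|g^{-1}w|<|h^{-1}g|/2$ forces $|h^{-1}g|\sim|h^{-1}w|$ and hence restricts $w$ to $B(h,CA\rho(h))$, and that the resulting inner $g$‑integral is $\leq C|h^{-1}w|^{-Q+2}$, so the first term is bounded by $C\int_{B(h,CA\rho(h))}V(w)\,|h^{-1}w|^{-Q+2}\,dw$, which is $\leq C$ by Lemma \ref{lem5}.

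To prove the pointwise estimate I would repeat the proof of Lemma \ref{lem8} with $u(g)=\Gamma^L(g,h,\tau)-\Gamma(g,h,\tau)$ in place of $\Gamma^L(g,h,\tau)$. Fixing $g_0$ and setting $R=|h^{-1}g_0|/4>0$, the function $u$ solves $-\Delta_{\mathbb H^n,g}u+i\tau u=-V(g)\Gamma^L(g,h,\tau)$ in $B(g_0,2R)$ (compare $(\ref{a6})$); the same cutoff $\phi$ and representation formula then give
\begin{equation*}
\big|\nabla_{\mathbb H^n}u(g_0)\big| \leq C\int_{B(g_0,2R)}\frac{V(g)\,\big|\Gamma^L(g,h,\tau)\big|}{|g_0^{-1}g|^{Q-1}}\,dg + \frac{C}{R^{Q+1}}\int_{B(g_0,2R)}\big|\Gamma^L(g,h,\tau)-\Gamma(g,h,\tau)\big|\,dg .
\end{equation*}
This is exactly the bound used for $(\ref{a12})$, except that the second integrand is $|\Gamma^L-\Gamma|$ rather than $|\Gamma^L|$; that is where the improvement comes from. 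On $B(g_0,2R)$ one has $|h^{-1}g|\sim R$, so $(\ref{a5})$ still bounds $|\Gamma^L(g,h,\tau)|$ by $C_l(1+R|\tau|^{1/2})^{-l}R^{-Q+2}$, while Lemma \ref{lem6} bounds $|\Gamma^L(g,h,\tau)-\Gamma(g,h,\tau)|$ by $C_l(1+R|\tau|^{1/2})^{-l}\rho(g)^{-\delta}R^{-Q+2+\delta}$; here I use Lemma \ref{lem4} to see that $\rho(g)\sim\rho(h)$ and that, because $|h^{-1}g|\leq C_0\rho(g)$ with $C_0=C_0(A)$, the conclusion of Lemma \ref{lem6} remains valid (in the residual range where $|h^{-1}g|$ is comparable to $\rho(g)$ it follows at once from $|\Gamma^L-\Gamma|\leq|\Gamma^L|+|\Gamma|$ together with $(\ref{a3})$ and $(\ref{a5})$). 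Feeding these bounds in, invoking $(\ref{a10})$ and $(\ref{a11})$, and integrating in $\tau$ (which contributes a factor comparable to $R^{-1}$) produces the claimed pointwise estimate, since $R\sim|h^{-1}g_0|$ and $B(g_0,2R)=B\big(g_0,|h^{-1}g_0|/2\big)$.

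The main obstacle is the pointwise estimate, and within it the $\delta$‑gain in the diagonal term, which hinges on inserting Lemma \ref{lem6} (not the plain size bound $(\ref{a5})$) into the Lemma \ref{lem8} machinery and on careful bookkeeping of the powers of $\rho$ and $R$. A second point needing care is that the $V$‑term must be handled by the Fubini rearrangement above and not by a crude pointwise/H\"older estimate: the latter would require $q_{_0}>Q$, whereas the Fubini argument uses $V\in B_{q_{_0}}$, $q_{_0}>\frac Q2$, only through Lemma \ref{lem5}.
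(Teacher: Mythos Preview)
Your proof is correct and reaches the same pointwise inequality as the paper's $(\ref{a13})$ (with $\rho(g)^{-\delta}|h^{-1}g|^{-Q+\delta}$ in place of the paper's equivalent $r^{-\delta}|h^{-1}g|^{-Q+\delta}$), but by a different path at both stages. For the pointwise bound, the paper differentiates the resolvent identity $(\ref{a6})$ directly and splits the resulting $w$--integral into the three regions $|g^{-1}w|<R$, $|h^{-1}w|<R$, and the complement, estimating each piece $J_1,J_2,J_3$ by hand; you instead recycle the localized representation formula from the proof of Lemma~\ref{lem8}, applied to $u=\Gamma^L-\Gamma$, and feed in Lemma~\ref{lem6} for $\sup_{B(g_0,2R)}|u|$. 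Your route is cleaner in that the $\delta$--gain comes for free from Lemma~\ref{lem6}, whereas the paper has to redo a splitting very similar to the one already carried out in the proof of Lemma~\ref{lem6}. For the integration step, the paper passes through $L^{p_{_0}}$ bounds on dyadic annuli via the boundedness of fractional integrals and then uses H\"older; your Fubini rearrangement, reducing the $V$--term to $\int_{B(h,C\rho(h))}V(w)\,|h^{-1}w|^{2-Q}\,dw$ and invoking Lemma~\ref{lem5}, is more elementary and avoids the $p_{_0}$ machinery entirely. Both arguments use only $q_{_0}>\frac{Q}{2}$.
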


\begin{proof}
Set $r= A\, \rho(h)$. Suppose $|h^{-1}g| \leq r$ and let
$R=\frac{|h^{-1}g|}{4}$. By $(\ref{a4})$, $(\ref{a5})$ and $(\ref{a6})$,
\begin{eqnarray*}
&& \big| \nabla_{\mathbb{H}^n, g} \Gamma^L(g, h,\tau)-
\nabla_{\mathbb{H}^n, g} \Gamma(g, h,\tau) \big|\\
&&\qquad \leq \int_{\mathbb{H}^n} \big| \nabla_{\mathbb{H}^n,
g} \Gamma(g, w,\tau) \big|\, V(w)\, \big| \Gamma^L(w, h,\tau) \big|\, dw\\
&&\qquad \leq \int_{\mathbb{H}^n} \frac{C_l} {\big (
1+|g^{-1}w|\, |\tau|^{\frac{1}{2}} \big )^l |g^{-1}w|^{Q-1}} \\
&&\qquad \qquad \quad \cdot \frac{V(w)\, dw} {\big( 1+
|h^{-1}w|\, |\tau|^{\frac{1}{2}} \big)^l
\big( 1+ |h^{-1}w|\, \rho(h)^{-1} \big)^l |h^{-1}w|^{Q-2}}\\
&&\qquad = \int_{|g^{-1}w|<R}+ \int_{|h^{-1}w| <R}+
\int_{|h^{-1}w| \geq R,\, |g^{-1}w| \geq R }\\
&&\qquad = J_1+J_2+J_3.
\end{eqnarray*}
It is easy to see that
\begin{equation*}
J_1\ \ \leq \ \ \frac{C_l}{\big( 1+R\,
|\tau|^{\frac{1}{2}}\big)^l} \frac{1}{R^{Q-2}} \int_{B(g, R)}
\frac{V(w)\, dw}{|g^{-1}w|^{Q-1}}
\end{equation*}
and
\begin{eqnarray*}
J_2 &\leq& \frac{C_l} {\big( 1+R\, |\tau|^{\frac{1}{2}} \big)^l}
\frac{1}{R^{Q-1}} \int_{B(h, R)} \frac{V(w)\, dw}{|h^{-1}w|^{Q-2}}\\
&\leq& \frac{C_l} {\big( 1+R\, |\tau|^{\frac{1}{2}} \big)^l}
\frac{1}{R^{2Q-3}} \int_{B(h, R)} V(w)\, dw\\
&\leq& \frac{C_l} {\big(
1+R\, |\tau|^{\frac{1}{2}} \big)^l} \frac{1}{R^{Q-1}} \bigg(
\frac{R}{r} \bigg)^{2-\frac{Q}{q_{_0}}},
\end{eqnarray*}
where we used Lemma \ref{lem5} for the second inequality and
Lemma \ref{lem2} together with Lemma \ref{lem3} for the last
inequality.

Note that $|g^{-1}w| \sim |h^{-1}w|$ when $|g^{-1}w| \geq R$ and
$|h^{-1}w|\geq R$. We have
\begin{eqnarray*}
J_3 &\leq& \frac{C_l}{\big( 1+ R\, |\tau|^{\frac{1}{2}} \big)^l}
\int_{|h^{-1}w|\geq R} \frac{V(w)\,
dw}{\big( 1+|h^{-1}w|\, \rho(h)^{-1} \big)^l |h^{-1}w|^{2Q-3}}\\
&\leq& \frac{C_l}{\big( 1+ R\, |\tau|^{\frac{1}{2}} \big)^l}
\bigg( \int_{r>|h^{-1}w|\geq R} \frac{V(w)\, dw}{|h^{-1}w|^{2Q-3}}
+ r^l \int_{|h^{-1}w|\geq r} \frac{V(w)\, dw}{|h^{-1}w|^{2Q-3+l}}
\bigg).
\end{eqnarray*}
It follows from the H\"{o}lder inequality and $B_{q_{_0}}$
condition that
\begin{eqnarray*}
\int_{r>|h^{-1}w|\geq R} \frac{V(w)\, dw}{|h^{-1}w|^{2Q-3}} &\leq&
C \bigg( \int_{B(h,r)} V(w)^{q_{_0}}\, dw \bigg)^\frac{1}{q_{_0}}
\bigg( \int_R^r t^{(-2Q+3)q'_{_0}
+Q-1}\, dt \bigg)^\frac{1}{q'_{_0}}\\
&\leq& C\, r^{\frac{Q}{q_{_0}}-2} R^{-2Q+3+ \frac{Q}{q'_{_0}}}\\
&=& \frac{C}{R^{Q-1}} \bigg ( \frac{R}{r}\bigg )^{2-\frac{Q}{q_{_0}}}.
\end{eqnarray*}
Using Lemma \ref{lem5} and taking $l$ sufficiently large, we
obtain
\begin{eqnarray*}
r^l \int_{|h^{-1}w|\geq r} \frac{V(w)\, dw}{|h^{-1}w|^{2Q-3+l}}
&\leq& C\, r^l \sum_{j=1}^{\infty}
(2^j r)^{-2Q+3-l} \int_{B(h, 2^jr)} V(w)\, dw\\
&\leq& \frac{C}{r^{Q-1}} \sum_{j=1}^{\infty} 2^{-(l-l_1+Q-1)j}\\
&\leq& \frac{C}{R^{Q-1}} \bigg( \frac{R}{r}
\bigg)^{2-\frac{Q}{q_{_0}}}.
\end{eqnarray*}
The above three estimates give
\begin{equation*}
J_3 \leq \frac{C_l} {\big( 1+R\, |\tau|^{\frac{1}{2}} \big)^l}
\frac{1}{R^{Q-1}} \bigg ( \frac{R}{r}\bigg )^{2-\frac{Q}{q_{_0}}}.
\end{equation*}
Therefore, for $|h^{-1}g|<r \sim \rho(h)$,
\begin{eqnarray*}
&& \big|\nabla_{\mathbb{H}^n, g}\Gamma^L(g, h,\tau)-
\nabla_{\mathbb{H}^n, g}\Gamma(g, h,\tau) \big|\\
&& \leq \frac{C_l}{\big( 1+ |h^{-1}g|\, |\tau|^{\frac{1}{2}}
\big)^l} \left( \frac{1}{|h^{-1}g|^{Q-2}} \int_{B \big( g,
\frac{|h^{-1}g|}{4} \big)} \frac{V(w)\, dw}{|g^{-1}w|^{Q-1}} +
\frac{1}{|h^{-1}g|^{Q-1}} \bigg( \frac{|h^{-1}g|}{r}
\bigg)^{2-\frac{Q}{q_{_0}}} \right).
\end{eqnarray*}
In view of $(\ref{a10})$ and $(\ref{a11})$, by integration we
obtain
\begin{equation}\label{a13}
\big| R^L_j(g, h)- R_j(g, h) \big| \leq \frac{C}{|h^{-1}g|^{Q-1}}
\int_{B \big( g, \frac{|h^{-1}g|}{4} \big)} \frac{V(w)\,
dw}{|g^{-1}w|^{Q-1}} + \frac{C}{|h^{-1}g|^{Q}} \bigg (
\frac{|h^{-1}g|}{r}\bigg )^{2-\frac{Q}{q_{_0}}}.
\end{equation}
It follows from the boundedness of fractional integrals and
(\ref{a13}) that, for $k\leq 0$,
\begin{eqnarray*}
&& \bigg( \int_{2^{k-1}r \leq |h^{-1} g|<2^k r} \big| R^L_j(g, h)- R_j(g, h)
\big|^{p_{_0}}\, dg \bigg)^{\frac{1}{p_{_0}}}\\
&&\qquad \leq \frac{C}{(2^k r)^{Q-1}} \bigg( \int_{B(h, 2r)}
V(g)^{q_{_0}}\, dg \bigg)^{\frac{1}{q_{_0}}} + C\,
2^{k(2-\frac{Q}{q_{_0}})}(2^k r)^{\frac{Q}{p_{_0}}-Q}\\
&&\qquad \leq C\,
2^{k(2-\frac{Q}{q_{_0}})}(2^k r)^{-\frac{Q}{p'_{_0}}}.
\end{eqnarray*}
By H\"{o}lder's inequality, we obtain
\begin{eqnarray*}
&&\int_{|g^{-1}h|<r} \big| R^L_j(g, h)- R_j(g, h) \big|\, dg\\
&&\qquad \leq \sum^0_{k=-\infty} \bigg( \int_{2^{k-1}r \leq
|g^{-1}_0 h|< 2^k r} \big| R^L_j(g, h)- R_j(g, h) \big|^{p_{_0}}\,
dg \bigg)^{\frac{1}{p_{_0}}} (2^k r)^{\frac{Q}{p'_{_0}}}\\
&&\qquad \leq C\, \sum^0_{k=-\infty} 2^{k(2-\frac{Q}{q_{_0}})} =
C.
\end{eqnarray*}
The proof of Lemma \ref{lem9} is completed.
\end{proof}


\section {Local Hardy spaces}

An $H^1_L$-function is locally equal to a function
in a certain scaled local Hardy space. Before dealing with
$H^1_L(\mathbb{H}^n)$, we first discuss local Hardy
spaces on the Heisenberg group. The theory of local Hardy
spaces on $\mathbb{R}^n$ was studied by Goldberg \cite{Goldberg}.
It is sure that the local version of Hardy spaces can be extended
to more general setting such as homogeneous groups.
However, there is no reference about it as far as the authors know.

We recall some results of Hardy spaces on the Heisenberg
group (cf. \cite{Folland-Stein}).
Let $\mathscr {S}(\mathbb{H}^n)$ denote the Schwartz class and
$\mathscr {S}'(\mathbb{H}^n)$ be the space of tempered
distributions. For $f \in \mathscr {S}'(\mathbb{H}^n)$ and $\phi
\in \mathscr {S}(\mathbb{H}^n)$, we define the nontangential
maximal function $M_{\phi}f$ and the radial maximal function
$M^+_{\phi}f$ of $f$ with respect to $\phi$ by
\begin{eqnarray*}
M_{\phi}f(g) &=& \sup _{|g^{-1}h| < r < \infty} \big| f \ast \phi_r (h) \big|,\\
M^+_{\phi}f(g) &=& \sup _{r>0} \big| f \ast \phi_r (g) \big|,
\end{eqnarray*}
where
\begin{eqnarray*}
\phi_r (g)= r^{-Q} \phi \big( \delta_{\frac{1}{r}}g \big).
\end{eqnarray*}

Denote by $\{ Y_j:\; j=1, \cdots, 2n+1 \}$  the basis for the
right-invariant vector fields on $\mathbb H^n$ corresponding to
$\{ X_j:\; j=1, \cdots, 2n+1 \}$; that is,
\begin{eqnarray*}
Y_{2n+1}= X_{2n+1},\ \
Y_j= \frac{\partial}{\partial x_j}-2x_{n+j}
\frac{\partial}{\partial t},\ \ Y_{n+j}=\frac{\partial}{\partial
x_{n+j}}+2x_j \frac{\partial}{\partial t},\ \ j=1, \cdots, n.
\end{eqnarray*}
Let $N \in \mathbb{N}$ and $\phi \in \mathscr {S}(\mathbb{H}^n)$.
We define a seminorm on $\mathscr {S}(\mathbb{H}^n)$ by
\begin{eqnarray*}
\big\| \phi \big\|_{(N)} = \sup_{g \in \mathbb{H}^n \atop |I| \leq N} (1+
|g|)^{(N+1)(Q+1)} \big| Y^I \phi (g) \big|,
\end{eqnarray*}
where
\begin{eqnarray*}
Y=(Y_1, \cdots, Y_{2n+1}),\ \ I=(i_1, \cdots, i_{2n+1}),\ \
Y^I= Y^{i_1}_1 \cdots Y^{i_{2n+1}}_{2n+1},\ \ \text{and}\ \
|I|= \sum_{j=1}^{2n+1} i_j .
\end{eqnarray*}
For $f \in \mathscr {S}'(\mathbb{H}^n)$, we define the
nontangential grand maximal function $M_{(N)}f$ and the radial
grand maximal function $M^+_{(N)}f$ by
\begin{eqnarray*}
M_{(N)}f(g) &=& \sup _{\phi \in \mathscr {S} \atop \|\phi\|_{(N)} \leq 1} M_{\phi}f (g),\\
M^+_{(N)}f(g) &=& \sup _{\phi \in \mathscr {S} \atop \|\phi\|_{(N)}
\leq 1} M^+_{\phi}f (g).
\end{eqnarray*}
For $0<p \leq 1$, we set $N_p= [Q(\frac{1}{p} -1)] +1$. The Hardy
space $H^p(\mathbb{H}^n)$ is defined by
\begin{eqnarray*}
H^p(\mathbb{H}^n)= \big\{ f \in \mathscr {S}'(\mathbb{H}^n):\;
M_{(N_p)}f \in L^p(\mathbb{H}^n) \big\}
\end{eqnarray*}
with
\begin{eqnarray*}
\big\| f \big\|_{H^p} = \big\| M_{(N_p)}f \big\|_{L^p}.
\end{eqnarray*}

We say that a function $\phi \in \mathscr {S}(\mathbb{H}^n)$ is a
{\it commutative approximate identity} if $\phi$ satisfies
\begin{eqnarray*}
\int_{\mathbb{H}^n} \phi(g)\, dg =1\qquad \text{and}\qquad \phi_r
\ast \phi_s = \phi_s \ast \phi_r\quad \text{for all}\quad r,s >0.
\end{eqnarray*}
We note that $\phi_r \ast \phi_s = \phi_s \ast \phi_r$ if
$\phi(x,t)$ is radial or polyradial (cf. \cite{Folland-Stein})
with respect to $x$ .

\begin{proposition}[{\cite[Chapter 4]{Folland-Stein}}]\label{pro1}
Suppose $f \in \mathscr {S}'(\mathbb{H}^n)$ and $0<p \leq 1$. Let
$\phi$ be a commutative approximate identity and $N \geq N_p$
be fixed. Then
\begin{eqnarray*}
\big\| M_{(N)}f \big\|_{L^p} \sim \big\| M^+_{(N)}f \big\|_{L^p} \sim \big\| M_{\phi}f
\big\|_{L^p} \sim \big\| M^+_{\phi}f \big\|_{L^p}.
\end{eqnarray*}
\end{proposition}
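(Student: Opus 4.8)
The plan is to prove that the four quantities are comparable by reducing the whole statement to the single nontrivial estimate
$$\big\|M_{(N)}f\big\|_{L^p}\le C\big\|M^+_\phi f\big\|_{L^p},\qquad N\ge N_p .$$
The elementary half costs nothing: restricting a nontangential supremum to $h=g$ gives $M^+_\phi f\le M_\phi f$ and $M^+_{(N)}f\le M_{(N)}f$ pointwise, while $\phi/\|\phi\|_{(N)}$ is an admissible test function, so $M_\phi f\le\|\phi\|_{(N)}M_{(N)}f$ and $M^+_\phi f\le\|\phi\|_{(N)}M^+_{(N)}f$. Hence once the displayed inequality is known, the chains $M^+_\phi f\le M_\phi f\le C\,M_{(N)}f\le C'\,M^+_\phi f$ and $M^+_\phi f\le C\,M^+_{(N)}f\le C\,M_{(N)}f\le C'\,M^+_\phi f$ pin all four $L^p$ norms together.

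\emph{Step 1: a tangential maximal function.} For $K>0$ introduce
$$M^{**}_{\phi,K}f(g)=\sup_{r>0,\ h\in\mathbb H^n}\big|f\ast\phi_r(h)\big|\Big(1+\frac{|h^{-1}g|}{r}\Big)^{-K},$$
so that $M_\phi f\le M^{**}_{\phi,K}f$ trivially. The key analytic fact is the pointwise majorization
$$M^{**}_{\phi,K}f(g)\le C\Big(\mathcal M\big[(M^+_\phi f)^{Q/K}\big](g)\Big)^{K/Q}$$
valid whenever $K>Q/p$, where $\mathcal M$ is the uncentered Hardy--Littlewood maximal operator on the space of homogeneous type $(\mathbb H^n,d,dg)$. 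Granting this, the $L^{pK/Q}$-boundedness of $\mathcal M$ (legitimate since $pK/Q>1$) yields $\|M^{**}_{\phi,K}f\|_{L^p}\le C\|M^+_\phi f\|_{L^p}$. To prove the pointwise bound one uses that $(h,r)\mapsto f\ast\phi_r(h)$ is smooth with $X_j(f\ast\phi_r)(h)=r^{-1}\big(f\ast(X_j\phi)_r\big)(h)$ and $r\,\partial_r(f\ast\phi_r)(h)=\big(f\ast\widetilde\phi_r\big)(h)$ for a suitable $\widetilde\phi\in\mathscr S$; this controls the oscillation of $f\ast\phi_r$ over a ball of radius comparable to $r$ by a similar tangential maximal function of a derivative of $\phi$, and the standard dichotomy — either $M^+_\phi f$ is comparably large on a fixed fraction of $B(h,cr)$, or the oscillation estimate forces $|f\ast\phi_r(h)|$ to be small — closes the argument, the exponent $Q/K$ entering through the volume growth $|B(g,r)|\sim r^Q$. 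One should first run this for tempered distributions of finite order, for which $f\ast\phi_r(h)\to0$ as $r\to\infty$ or $|h|\to\infty$, and remove the restriction by approximation; one also records here that $M_{(N)}f<\infty$ a.e. once $M^+_\phi f\in L^p$.

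\emph{Step 2: synthesis via a reproducing formula.} It remains to show $M_{(N)}f(g)\le C\,M^{**}_{\phi,K}f(g)$ pointwise for a suitable $K$. Here the hypothesis that $\phi$ is a commutative approximate identity is used: since $\int_{\mathbb H^n}\phi=1$ and $\phi_r\ast\phi_s=\phi_s\ast\phi_r$, the family $\{\phi_s\}$ behaves like a truncated Calder\'on resolution of the identity, and one obtains for every $\psi\in\mathscr S(\mathbb H^n)$ a decomposition
$$\psi=\sum_{k=0}^{\infty}\eta^{(k)}\ast\phi_{2^{-k}},\qquad \big\|\eta^{(k)}\big\|_{(M)}\le C_{M}\,2^{-k\varepsilon}\,\|\psi\|_{(M')},$$
where $\varepsilon$ may be taken as large as desired at the cost of enlarging $M'$. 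Dilating, $f\ast\psi_r=\sum_k(f\ast\phi_{2^{-k}r})\ast\eta^{(k)}_r$, and for $|h^{-1}g|<r$ each summand is bounded, using $|(f\ast\phi_{2^{-k}r})(u)|\le M^{**}_{\phi,K}f(g)\,(1+2^k|u^{-1}g|/r)^K$, the rapid decay of $\eta^{(k)}$, and $|u^{-1}g|\lesssim r$ on the effective support, by $C\,2^{kK}\|\eta^{(k)}\|_{(M)}\,M^{**}_{\phi,K}f(g)$. Choosing first $K>Q/p$ (as demanded by Step 1) and then $\varepsilon>K$ (that is, $M'$ large), the series $\sum_k 2^{k(K-\varepsilon)}$ converges, and taking the supremum over $\psi$ with $\|\psi\|_{(N)}\le1$ gives $M_{(N)}f(g)\le C\,M^{**}_{\phi,K}f(g)$; this is where the calibration $N_p=[Q(\frac{1}{p}-1)]+1$ enters, ensuring that admissible test functions are smooth enough to feed the reproducing formula with a decay $\varepsilon$ exceeding the required $K$. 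Combining Steps 1 and 2 gives the displayed reduction, hence the proposition.

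\emph{Main obstacle.} The crux is the pointwise inequality of Step 1 comparing the tangential maximal function with the Hardy--Littlewood maximal function of a power of the radial one: this is the only genuinely ``hard analysis'' point, requiring the derivative identities for $f\ast\phi_r$, the a priori decay of $f\ast\phi_r$ for finite-order distributions, and a Whitney-type covering adapted to the homogeneous metric, and it is where the restriction $K>Q/p$ (equivalently the specific value of $N_p$) is forced. The reproducing-formula step is more mechanical, but on $\mathbb H^n$ one must be careful in verifying the seminorm estimates for the pieces $\eta^{(k)}$, since translations and dilations do not commute as they do on $\mathbb R^n$.
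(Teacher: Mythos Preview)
The paper does not supply a proof of this proposition at all: it is quoted verbatim from \cite[Chapter~4]{Folland-Stein} and used as a black box. Your sketch is a faithful outline of the Folland--Stein argument itself (radial $\Rightarrow$ tangential via the Hardy--Littlewood maximal function of a power, then grand $\Rightarrow$ tangential via a Calder\'on-type reproducing formula exploiting commutativity of $\phi_r\ast\phi_s$), so there is nothing to compare---you have simply unpacked the cited reference.

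One caution on Step~2: the decomposition $\psi=\sum_k\eta^{(k)}\ast\phi_{2^{-k}}$ with geometric seminorm decay is not proved in quite this form in \cite{Folland-Stein}; there the passage from $M^+_\phi$ to the grand maximal function goes through an intermediate ``tangential grand maximal function'' and uses the commutativity of $\phi$ in a somewhat different way (writing $f\ast\psi_r=(f\ast\phi_s)\ast(\phi_{s})^{-1}\ast\psi_r$ in a suitable sense and controlling the second factor). Your formulation is morally equivalent and would work, but if you intend to fill in the details you should either follow the actual Folland--Stein route or be prepared to justify the seminorm estimate $\|\eta^{(k)}\|_{(M)}\le C_M 2^{-k\varepsilon}\|\psi\|_{(M')}$ carefully, since on a noncommutative group the Fourier-analytic shortcuts from $\mathbb R^n$ are unavailable.
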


Proposition \ref{pro1} tells us that all these maximal functions
give equivalent characterizations of the Hardy space
$H^p(\mathbb{H}^n)$. Specifically, the heat kernel $H_s(g)=
\varphi_{\!_{\sqrt{s}}}(g)$ where $\varphi(g) = H_1(g)$ is a
commutative approximate identity. Therefore the radial heat
maximal function $Mf= M^+_{\varphi}f$ characterizes the Hardy
space $H^p(\mathbb{H}^n)$.

A significant fact of the Hardy space $H^p(\mathbb{H}^n)$ is that
an $H^p$ distribution admits an atomic decomposition. Let $0<p
\leq 1 <q \leq \infty$. A function $a\in L^q(\mathbb{H}^n)$ is
called an $H^{p,q}$-atom if the following conditions hold:
\begin{itemize}
\item [\rm(i)] $\mathrm{supp}\, a \subset B(g_0,r),$
\item [\rm(ii)] $\big\| a \big\|_{L^{q}}
                \leq \big| B(g_0,r) \big|^{\frac{1}{q}-\frac{1}{p}},$
\item [\rm(iii)] $\displaystyle\int_{B(g_0,r)} a(g) g^I\, dg =0\ $ for $\ d(I) < N_p,$
\end{itemize}
where $g^I$ denotes the monomial
\begin{eqnarray*}
(x,t)^I= x^{i_1}_1 \cdots x^{i_{2n}}_{2n}t^{i_{2n+1}}_{2n+1}
\end{eqnarray*}
and $d(I)$ is the homogeneous degree of $g^I$ defined by
\begin{eqnarray*}
d(I)= 2 i_{2n+1} + \sum_{j=1}^{2n} i_j.
\end{eqnarray*}

\begin{proposition}[{\cite[Chapter 3]{Folland-Stein}}]\label{pro2}
Let $0<p \leq 1 <q \leq \infty$. A distribution
$f \in H^p(\mathbb{H}^n)$ if and only if $\,f$ can be written as
$f= \sum_j \lambda_j\, a_j$, where $a_j$ are $H^{p,q}$-atoms,
$\sum_j |\lambda_j|^p < \infty$, and the sum converges in the
sense of distributions. Moreover,
\begin{eqnarray*}
\big\| f \big\|^p_{H^p} \sim \inf \bigg \{ \sum_j |\lambda_j|^p \bigg \},
\end{eqnarray*}
where the infimum is taken over all atomic decompositions of $\,f$
into $H^{p,q}$-atoms.
\end{proposition}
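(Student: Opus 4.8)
I would establish the two implications separately, following the classical Calder\'on--Zygmund--Latter scheme adapted to the stratified group $\mathbb H^n$, using Proposition \ref{pro1} to pass freely among the various maximal functions.

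For the implication that atoms generate $H^p$, the plan is to prove a uniform bound $\big\| M_{(N_p)}a \big\|_{L^p} \le C$, with $C$ depending only on $p,q,\mathbb H^n$, for every $H^{p,q}$-atom $a$ supported in $B_0 = B(g_0,r)$. Over the dilate $2B_0$ one uses $M_{(N_p)}a \le C\,M_{\mathrm{HL}}a$ pointwise, the $L^q$-boundedness of the Hardy--Littlewood maximal operator ($q>1$), H\"older's inequality, and the size condition (ii) to get $\int_{2B_0}(M_{(N_p)}a)^p \le C$. Off $2B_0$ one uses the cancellation (iii): expanding $\phi_s$ in its left-invariant Taylor polynomial of homogeneous degree $N_p$ about $g_0$ and invoking $\int a(w)\,w^I\,dw = 0$ for $d(I)<N_p$ yields the pointwise decay $M_{(N_p)}a(g)\le C\,r^{\,Q(1-\frac1p)+N_p}\,|g_0^{-1}g|^{-(Q+N_p)}$; since $N_p = [Q(\tfrac1p-1)]+1$ forces $p(Q+N_p)>Q$, this is $p$-integrable over $\mathbb H^n\setminus 2B_0$ with bound $C$. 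For $f=\sum_j\lambda_j a_j$ with $\sum_j|\lambda_j|^p<\infty$, subadditivity of $t\mapsto t^p$ on $[0,\infty)$ gives $(M_{(N_p)}f)^p\le\sum_j|\lambda_j|^p(M_{(N_p)}a_j)^p$, hence $\|f\|_{H^p}^p\le C\sum_j|\lambda_j|^p$ and, in particular, convergence of $\sum_j\lambda_j a_j$ in $\mathscr S'(\mathbb H^n)$.

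For the converse, I would run a Calder\'on--Zygmund decomposition of $f\in H^p$ at each dyadic height. Writing $m = M_{(N_p)}f$ and $\Omega_k=\{g:\ m(g)>2^k\}$ (open, of finite measure), take a Whitney decomposition $\Omega_k=\bigcup_i B_{k,i}$ into balls of bounded overlap with radius comparable to $\mathrm{dist}(B_{k,i},\Omega_k^{\,c})$, together with a subordinate partition of unity $\{\eta_{k,i}\}$, $\sum_i\eta_{k,i}=\chi_{\Omega_k}$, $|Y^I\eta_{k,i}|\le C\,r_{k,i}^{-d(I)}$. Split $f=g_k+\sum_i b_{k,i}$ with $b_{k,i}=(f-P_{k,i})\eta_{k,i}$, where $P_{k,i}$ is the unique polynomial of homogeneous degree $<N_p$ making $\int b_{k,i}(g)\,g^I\,dg=0$ for all $d(I)<N_p$; this $P_{k,i}$ exists and is uniformly controlled because the relevant Gram matrix, built from $\{\eta_{k,i}\,dg\}$ and the monomial basis, becomes a fixed positive-definite matrix after translating to $g_0$ and rescaling by $\delta_{r_{k,i}}$, and each pairing $\langle f\eta_{k,i},g^I\rangle$ makes sense since $\eta_{k,i}g^I$ is a legitimate test function. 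Telescoping in $k$ gives $f=\sum_k(g_{k+1}-g_k)$ in $\mathscr S'$, and a regrouping exhibits $g_{k+1}-g_k=\sum_i A_{k,i}$ with each $A_{k,i}$ supported in a fixed dilate of $B_{k,i}$, with vanishing $g^I$-moments for $d(I)<N_p$, and with $\|A_{k,i}\|_{L^\infty}\le C\,2^k$; setting $\lambda_{k,i}=C\,2^k|B_{k,i}|^{1/p}$ makes $\lambda_{k,i}^{-1}A_{k,i}$ an $H^{p,\infty}$-atom, hence an $H^{p,q}$-atom, so that $f=\sum_{k,i}\lambda_{k,i}\,(\lambda_{k,i}^{-1}A_{k,i})$. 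The norm bound then follows from $\|g_k\|_{L^\infty}\le C\,2^k$ and $\sum_i|B_{k,i}|\le C\,|\Omega_k|$ via the layer-cake formula: $\sum_{k,i}|\lambda_{k,i}|^p\le C\sum_k 2^{kp}|\Omega_k|\le C\int_{\mathbb H^n}m^p=C\,\|f\|_{H^p}^p$.

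The heart of the matter --- and where ``by the same arguments'' as in the Euclidean case hides genuine work --- is the distributional Calder\'on--Zygmund step: proving that $g_k$ is an honest $L^\infty$ function with $\|g_k\|_{L^\infty}\le C\,2^k$, obtaining the uniform $L^\infty$ estimates on the atomic pieces $A_{k,i}$, and controlling the polynomial corrections $P_{k,i}$, all while $f$ is only a tempered distribution. On $\mathbb H^n$ this requires replacing Euclidean Taylor expansion throughout by the left-invariant group Taylor expansion of Folland--Stein and its remainder estimates, and keeping careful track of how the homogeneous degree $d(I)$ interacts with the dilations $\delta_r$; the maximal-function comparisons, H\"older's inequality, the Whitney covering and the partition of unity are then routine.
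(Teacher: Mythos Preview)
The paper does not supply a proof of this proposition: it is stated as a citation to \cite[Chapter 3]{Folland-Stein} and used as a black box. Your proposal correctly outlines the standard Calder\'on--Zygmund--Latter argument as carried out by Folland and Stein in that reference, so in that sense your approach \emph{is} the paper's approach by transitivity.

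Your sketch is sound in both directions. A couple of minor points worth flagging for a full write-up: in the easy direction, the pointwise inequality $(M_{(N_p)}f)^p\le\sum_j|\lambda_j|^p(M_{(N_p)}a_j)^p$ assumes you already know the series converges in $\mathscr S'$, so strictly speaking you should first argue convergence for finite sums and pass to the limit (or invoke the completeness of $H^p$ as Folland--Stein do); and in the hard direction, the claim $\|g_k\|_{L^\infty}\le C\,2^k$ hinges on showing that each $P_{k,i}$ satisfies $\sup_{B_{k,i}}|P_{k,i}|\le C\,2^k$, which requires testing $f$ against $\eta_{k,i}\phi_{r_{k,i}}(\cdot^{-1}g)$ for a nearby $g\in\Omega_k^c$ and unwinding the Gram matrix --- you have identified this as the crux, and it is. None of this is a gap; it is exactly the content of Folland--Stein, Chapter 3.
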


Now we go to the local version of Hardy spaces. We define the
local maximal functions $\widetilde{M}f$'s by taking supremum over
$0<r \leq 1$ instead of $0<r< \infty$ as follows.
\begin{align*}
  &\widetilde{M}_{\phi}f(g)=\sup_{|g^{-1}h|<r\le 1} \big| f*\phi_r(h) \big|,
 &&\widetilde{M}^+_{\phi}f(g)=\sup_{0<r\le 1} \big| f*\phi_r(g) \big|,\\
  &\widetilde{M}_{(N)}f(g)=\sup_{\phi\in\mathscr S \atop \|\phi\|_{(N)}\le 1}
    \widetilde{M}_{\phi}f(g),
 &&\widetilde{M}^+_{(N)}f(g)=\sup_{\phi\in\mathscr S \atop \|\phi\|_{(N)}\le 1}
    \widetilde{M}^+_{\phi}f(g).
\end{align*}

\begin{definition}\label{def3}
{\rm Let $0<p \leq 1$. The local Hardy space $h^p(\mathbb{H}^n)$ is
defined by
\begin{eqnarray*}
h^p(\mathbb{H}^n)= \big\{ f \in \mathscr {S}'(\mathbb{H}^n):\;
\widetilde{M}_{(N_p)}f \in L^p(\mathbb{H}^n) \big\}
\end{eqnarray*}
with
\begin{eqnarray*}
\big\| f \big\|_{h^p} = \big\| \widetilde{M}_{(N_p)}f \big\|_{L^p}.
\end{eqnarray*}}
\end{definition}

Similar to Proposition \ref{pro1}, by the same argument as in
\cite{Folland-Stein}, we have

\begin{proposition}\label{pro3}
Suppose $f \in \mathscr {S}'(\mathbb{H}^n)$ and $\,0<p \leq 1$. Let
$\phi$ be a commutative approximate identity and $N \geq N_p$
be fixed. Then
\begin{eqnarray*}
\big\| \widetilde{M}_{(N)}f \big\|_{L^p} \sim \big\| \widetilde{M}^+_{(N)}f
\big\|_{L^p} \sim \big\| \widetilde{M}_{\phi}f \big\|_{L^p} \sim \big\|
\widetilde{M}^+_{\phi}f \big\|_{L^p}.
\end{eqnarray*}
\end{proposition}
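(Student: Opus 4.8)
The statement to prove is Proposition \ref{pro3}, which asserts the equivalence of four maximal-function norms for the local Hardy space $h^p(\mathbb{H}^n)$. The plan is to follow the classical Fefferman--Stein / Folland--Stein machinery for $H^p$, adapting each step to the truncated scale $0<r\le 1$.

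First I would establish the two trivial inequalities. Since the supremum defining $\widetilde M^+_\phi f$ is over a subset of the parameter space used for $\widetilde M_\phi f$, and since $\widetilde M^+_{(N)}f\le \widetilde M_{(N)}f$ by the same pointwise reasoning, we immediately get
\begin{eqnarray*}
\big\| \widetilde M^+_\phi f \big\|_{L^p} \le \big\| \widetilde M_\phi f \big\|_{L^p},
\qquad
\big\| \widetilde M^+_{(N)} f \big\|_{L^p} \le \big\| \widetilde M_{(N)} f \big\|_{L^p}.
\end{eqnarray*}
Likewise, for any fixed commutative approximate identity $\phi$ normalized so that $\|\phi\|_{(N)}\le 1$ (after multiplying by a harmless constant), we have $\widetilde M_\phi f \le \widetilde M_{(N)}f$ and $\widetilde M^+_\phi f \le \widetilde M^+_{(N)}f$ pointwise. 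So the real content is to close the loop: bound $\widetilde M_{(N)}f$ (the largest) in terms of $\widetilde M^+_\phi f$ (the smallest) in $L^p$.

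The core step is the grand-maximal-function estimate. Here I would invoke the argument of \cite[Chapter 4]{Folland-Stein} essentially verbatim, with one modification: all dilations $\phi_r$, $\psi_s$ appearing are restricted to $r,s\le 1$, so no new difficulty arises from large scales, and in fact the truncation only makes the sums that appear \emph{shorter}. The skeleton is: (a) given a commutative approximate identity $\phi$, show that for any $\psi\in\mathscr S$ one can write $\psi = \int_0^1 \psi^{(s)}*\phi_s\,\frac{ds}{s} + \psi^{(0)}*\phi_1$ (a Calder\'on-type reproducing formula truncated at scale $1$), where the auxiliary Schwartz functions $\psi^{(s)}$ have seminorms $\|\psi^{(s)}\|_{(N)}$ decaying like a positive power of $s$; (b) deduce a pointwise domination of $\widetilde M_\psi f$, uniformly over $\|\psi\|_{(N)}\le 1$, by a vector-valued maximal expression in $M^+_\phi f$ with a tangential/nontangential correction; (c) control the nontangential enlargement by the Hardy--Littlewood maximal operator composed with a power, i.e.\ $\widetilde M_{(N)}f \le C\big(\mathcal M\big[(\widetilde M^+_\phi f)^{p_0}\big]\big)^{1/p_0}$ for some $p_0<p$, where $\mathcal M$ is the (uncentered) Hardy--Littlewood maximal function on the space of homogeneous type $(\mathbb H^n, d, dg)$; (d) apply the $L^{p/p_0}$ boundedness of $\mathcal M$ (legitimate since $p/p_0>1$) to conclude $\|\widetilde M_{(N)}f\|_{L^p}\le C\|\widetilde M^+_\phi f\|_{L^p}$. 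Combining this with the trivial inequalities above and the fact that every commutative approximate identity is admissible in the grand maximal supremum gives the four-fold equivalence.

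The main obstacle is step (c)--(d): one must be careful that the good-$\lambda$ / maximal-function domination is genuinely local, i.e.\ that truncating the reproducing formula at scale $1$ does not destroy the decay estimates on the $\psi^{(s)}$ and does not introduce boundary terms that fail to be controlled by $M^+_\phi f$. The term $\psi^{(0)}*\phi_1$ is harmless since it is just a single fixed-scale convolution, bounded pointwise by $\|\psi^{(0)}\|_{(N)}\,\widetilde M^+_\phi f(g)$ after absorbing constants; the integral term is handled exactly as in the global case because the integrand is supported in $s\in(0,1]$ anyway. Since the Heisenberg group with the homogeneous norm $d(g,h)=|g^{-1}h|$ and Haar measure is a space of homogeneous type with the dilation structure making $\mathscr S(\mathbb H^n)$ well behaved under $\phi\mapsto\phi_r$, every ingredient of the Folland--Stein proof transfers, and we obtain Proposition \ref{pro3}.
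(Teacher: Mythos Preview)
Your proposal is correct and follows essentially the same approach as the paper, which simply states that Proposition~\ref{pro3} holds ``by the same argument as in \cite{Folland-Stein}'' without giving any details. You have supplied a reasonable sketch of how that Folland--Stein machinery adapts to the truncated scale $0<r\le 1$, which is exactly what the paper intends.
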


All these local maximal functions give equivalent
characterizations of the local Hardy space $h^p(\mathbb{H}^n)$.

\begin{lemma}\label{lem10} Let $0<p \leq 1$ and $f \in
h^p(\mathbb{H}^n)$. If $\psi \in \mathscr {S}(\mathbb{H}^n)$
satisfies
\begin{eqnarray*}
\int_{\mathbb{H}^n} \psi(g)\, dg =1 \quad \text{and} \quad
\int_{\mathbb{H}^n} \psi(g) g^I\, dg =0 \quad \text{for} \quad
0<d(I)< m= (N_p+1)(Q+2)
\end{eqnarray*}
(the cancellation conditions of $\psi$ can be relaxed to an
extent, but it is not important for us), then $f - f \ast \psi \in
H^p(\mathbb{H}^n)$ and there exists $C>0$ such that
\begin{eqnarray*}
\big\| f - f \ast \psi \big\|_{H^p} \leq C\, \big\| f \big\|_{h^p}.
\end{eqnarray*}
\end{lemma}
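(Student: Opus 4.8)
\textbf{Proof proposal for Lemma \ref{lem10}.}
The plan is to reduce the local statement to the global maximal function characterization of $H^p$ by estimating the nontangential grand maximal function $M_{(N_p)}(f-f\ast\psi)$ pointwise. Write $h=f-f\ast\psi$. For a test function $\phi$ with $\|\phi\|_{(N_p)}\le 1$ and a dilation parameter $r>0$, we must control $h\ast\phi_r(g)=f\ast\phi_r(g)-f\ast(\psi\ast\phi_r)(g)$. The key idea is a standard splitting into small scales ($r\le 1$) and large scales ($r>1$): for $r\le 1$ we use that $\phi_r$ and $\psi\ast\phi_r$ are (after rescaling) admissible test functions of controlled $\|\cdot\|_{(N_p)}$-seminorm, so that $|h\ast\phi_r(g)|\le C\,\widetilde M_{(N_p)}f(g)$; for $r>1$ we use the cancellation conditions on $\psi$ (vanishing moments up to order $m=(N_p+1)(Q+2)$) together with a Taylor expansion of $\phi_r$ to gain decay in $r$, reducing again to the local maximal function at unit scale.

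First I would make the small-scale estimate precise. For $0<r\le 1$, the function $\phi_r$ is literally an admissible bump at scale $r$, so $|f\ast\phi_r(g)|\le \widetilde M^+_{(N_p)}f(g)\le C\,\widetilde M_{(N_p)}f(g)$ by Proposition \ref{pro3}. For the term $f\ast(\psi\ast\phi_r)$, I would observe that $\psi\ast\phi_r$ is a Schwartz function essentially concentrated at scale $1$ (the larger of $r$ and $1$), and write $\psi\ast\phi_r=\eta_1$ where $\eta$ is a Schwartz function with $\|\eta\|_{(N_p)}\le C$ uniformly in $r\le 1$ (this uses that convolution of Schwartz functions is continuous in the relevant seminorms and that dilating down by $r\le 1$ only improves the decay constants). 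Hence $|f\ast(\psi\ast\phi_r)(g)|\le C\,\widetilde M_{(N_p)}f(g)$ as well. Combining, $\sup_{0<r\le 1}|h\ast\phi_r(h')|$ over the nontangential cone is $\le C\,\widetilde M_{(N_p)}f(g)$.

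Next, the large-scale estimate $r>1$, which is where the moment conditions on $\psi$ enter. Here I would write $h\ast\phi_r=f\ast\Phi_r$ where $\Phi_r=\phi_r-\psi\ast\phi_r$, and exploit that $\int\psi=1$ and $\psi$ kills monomials of degree $0<d(I)<m$. Expanding $\phi_r$ in a Taylor polynomial of homogeneous degree $<m$ around the relevant point and using the vanishing moments of $\psi$, the difference $\phi_r-\psi\ast\phi_r$ picks up a factor $r^{-m}$ times a bump that is still essentially at scale $r$; more precisely $\Phi_r$ can be decomposed (e.g. via a dyadic annular decomposition of the support of $\psi$ relative to scale $r$) as a sum $\sum_k 2^{-k m}\,(\text{bump at scale }\sim r)$ with summable coefficients, so that $|f\ast\Phi_r(g)|\le C\,r^{-\epsilon}\,M^+_{(N_p)}f(g)$ for some $\epsilon>0$ after also controlling the tail of $\psi$. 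Since on $\{r>1\}$ the factor $r^{-\epsilon}$ is bounded and $M^+_{(N_p)}f\le C\,\widetilde M_{(N_p)}f + (\text{contribution already absorbed})$—in fact here one only needs that $f\in h^p$ to get $f\ast\phi_r$ decaying appropriately for large $r$, or alternatively that $\psi\ast\phi_r$ and $\phi_r$ individually are handled by the large-scale decay of Schwartz bumps against a tempered distribution that already lies in $h^p$. Either way, the outcome is $M_{(N_p)}h(g)\le C\,\widetilde M_{(N_p)}f(g)$ pointwise, whence $\|h\|_{H^p}=\|M_{(N_p)}h\|_{L^p}\le C\,\|\widetilde M_{(N_p)}f\|_{L^p}=C\,\|f\|_{h^p}$.

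The main obstacle I expect is the large-scale term: making rigorous the claim that the moment cancellation of $\psi$ up to order $m$ yields enough decay in $r$ to overcome the fact that $M_{(N_p)}$ takes a supremum over all $r$, including $r\to\infty$. On a stratified group one cannot simply Taylor expand in the Euclidean sense; one must use the group Taylor formula with left-invariant derivatives $Y^I$ and the homogeneous degree $d(I)$, and keep careful track of how the non-commutativity affects the remainder estimate. The choice $m=(N_p+1)(Q+2)$ is presumably exactly what is needed so that, after accounting for the $(Q+1)$-type weights in the seminorm $\|\cdot\|_{(N_p)}$ and the dimensional factor from integrating the bump, the resulting series in $k$ converges and the net power of $r$ is negative. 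I would treat this carefully but expect the rest (the small-scale estimate and the passage from the pointwise bound to the $L^p$ bound) to be routine given Proposition \ref{pro3}.
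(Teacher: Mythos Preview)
Your overall strategy matches the paper's: split into scales $r\le 1$ and $r>1$, handle the small scales by recognizing $\phi_r$ and $\psi*\phi_r$ as admissible test functions at scales $\le 1$, and handle the large scales using the moment cancellation of $\psi$ together with the group Taylor formula. The paper works with the radial maximal function $M^+_\phi$ for a fixed commutative approximate identity $\phi$ rather than the grand maximal function, but this is inessential in view of Propositions~\ref{pro1} and~\ref{pro3}.

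There is, however, a genuine gap in your large-scale argument. You describe $\Phi_r=\phi_r-\psi*\phi_r$ as ``$r^{-m}$ times a bump at scale $\sim r$'' and conclude $|f*\Phi_r(g)|\le C\,r^{-\epsilon}\,M^+_{(N_p)}f(g)$ with the \emph{global} grand maximal function on the right. But we only know $f\in h^p$, not $f\in H^p$, so $M^+_{(N_p)}f$ is not known to lie in $L^p$ and this bound is circular; your subsequent remarks (``$f*\phi_r$ decaying appropriately for large $r$'', ``large-scale decay of Schwartz bumps against a tempered distribution in $h^p$'') do not repair it, since membership in $h^p$ alone gives no control of $f*\phi_r$ for $r>1$.

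The correct observation---and this is exactly what the paper does---is that for $r>1$ the kernel $\Phi_r$, \emph{undilated}, is itself an admissible test function at scale $1$. The Taylor remainder estimate combined with the moment conditions on $\psi$ yields
\[
\sup_{r>1,\ |I|\le N_p}\big|X^I\Phi_r(g)\big|\le C\,(1+|g|)^{-m},
\]
and since $m=(N_p+1)(Q+2)>(N_p+1)(Q+1)$ this is precisely the statement $\|\Phi_r\|_{(N_p)}\le C$ uniformly in $r>1$. Hence $|f*\Phi_r(g)|\le C\,\widetilde M^+_{(N_p)}f(g)$ directly, with the \emph{local} maximal function on the right. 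The point is not to extract a negative power of $r$ against a scale-$r$ bump, but to recognize $\Phi_r$ as a uniformly bounded scale-$1$ bump.
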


\begin{proof}
Let $\phi$ be a commutative approximate identity. We have
\begin{eqnarray}
M^+_{\phi}(f - f \ast \psi)(g)
&\leq& \widetilde{M}^+_{\phi}f(g)
      + \sup_{0<r \leq 1}\big| f \ast (\psi \ast \phi_r) (g) \big| \nonumber\\
& & +\, \sup_{1<r< \infty} \big| f \ast (\phi_r - \psi \ast \phi_r) (g) \big|. \label{a14}
\end{eqnarray}
We will prove that there exists $C>0$ such that
\begin{eqnarray}
\sup _{0<r \leq 1} \big| f \ast (\psi \ast \phi_r) (g) \big|
&\leq& C\, \widetilde{M}^+_{(N_p)}f (g),\label{a15}\\
\sup _{1<r< \infty } \big | f \ast (\phi_r - \psi \ast \phi_r) (g)
\big | &\leq& C\, \widetilde{M}^+_{(N_p)}f (g). \label{a16}
\end{eqnarray}
Then Lemma \ref{lem10} follows from $(\ref{a14})$, $(\ref{a15})$,
$(\ref{a16})$ and Proposition \ref{pro3}.

By \cite[Proposition 1.47]{Folland-Stein}, the estimate $(\ref{a15})$ follows from
\begin{eqnarray*} \sup_{0<r \leq 1} \big\|
\psi \ast \phi_r \big\|_{(N_p)} \leq C\, \big\| \psi \big\|_{(N_p +1)} \big\| \phi
\big\|_{(N_p +1)}.
\end{eqnarray*}
Note that
\begin{eqnarray*}
Y^I= \sum_{|J| \leq |I|\atop d(J) \geq d(I)} P_{I\!J} X^J,
\end{eqnarray*}
where $P_{I\!J}$ are homogeneous polynomials of degree
$d(J)-d(I)$. Hence
\begin{eqnarray*}
\big\| \phi \big\|_{(N)} \leq C \sup_{g \in \mathbb{H}^n\atop |I| \leq N}
(1+ |g|)^{(N+1)(Q+2)} \big| X^I \phi (g) \big|.
\end{eqnarray*}
To obtain the estimate $(\ref{a16})$, we only need to show
\begin{eqnarray}\label{a17}
\sup_{1<r < \infty\atop |I| \leq N_p} \big | X^I (\phi_r - \psi \ast
\phi_r) (g) \big| \leq C\, (1+ |g|)^{-m}.
\end{eqnarray}
For $f= X^I \phi$, we use the following Taylor inequality
(cf. \cite[Corollary 1.44]{Folland-Stein})
\begin{eqnarray*}
\big| f(hg) - P_g(h) \big| \leq C\, |h|^{m} \sup _{|h_1| \leq
b^{m} |h|\atop d(J)=m} \big| Y^Jf(h_1g) \big|,
\end{eqnarray*}
where $P_g(h)$ is the right Taylor polynomial of $f$ at the point
$g$ of degree $m-1$ and $b>1$ is a constant.
From the cancellation conditions of $\psi$ we get
\begin{eqnarray*}
\big| X^I (\phi_r - \psi \ast \phi_r) (g) \big|
&=& \bigg| \int_{\mathbb{H}^n} \psi (h) r^{- d(I)} \big( (X^I
\phi)_r (g) - (X^I \phi)_r (h^{-1}g) \big)\, dh \bigg|\\
&\leq& C \int_{\mathbb{H}^n} \big| \psi (h) \big|\, |h|^{m}\, r^{-d(I)-m}
\sup _{|h_1| \leq b^{m} |h|\atop d(J)=m} \big| (Y^JX^I
\phi)_r (h_1g) \big|\, dh\\
&\leq& C \bigg ( \int_{|h| \leq \frac{1}{2} b^{-m}|g|} +
\int_{|h| > \frac{1}{2} b^{-m} |g|} \bigg ).
\end{eqnarray*}
For $|h| \leq \frac{1}{2} b^{-m}|g|$,
$$r^{- d(I) -m} \sup_{|h_1| \leq b^{m} |h|\atop d(J)=m} \big|
  (Y^JX^I \phi)_r(h_1g) \big|
  \leq C\, (1+ |g|)^{-d(I)-m-Q}  \leq C\, (1+ |g|)^{-m}.$$
Therefore
\begin{eqnarray*}
&&\int_{|h| \leq \frac{1}{2} b^{-m}|g|} \big| \psi(h) \big|\, |h|^{m}\,
r^{-d(I)-m} \sup _{|h_1| \leq b^{m} |h|\atop d(J)=m} \big| (Y^JX^I
\phi)_r (h_1g) \big|\, dh \\
&&\qquad \leq C\, (1+ |g|)^{-m} \int_{|h| \leq \frac{1}{2} b^{-m}|g|}
\big| \psi(h) \big|\, |h|^{m}\, dh\\
&&\qquad\leq C\, (1+ |g|)^{-m}.
\end{eqnarray*}
On the other hand,
\begin{eqnarray*}
&&\int_{|h|> \frac{1}{2} b^{-m} |g|} \big| \psi(h) \big|\, |h|^{m}\,
r^{-d(I)-m} \sup _{|h_1| \leq b^{m} |h|\atop d(J)=m} \big| (Y^JX^I
\phi)_r (h_1g) \big|\, dh \\
&&\qquad\leq C \int_{|h| > \frac{1}{2} b^{-m} |g|} \big| \psi (h) \big|\,|h|^{m}\, dh\\
&&\qquad\leq C\, (1+ |g|)^{-m}.
\end{eqnarray*}
Thus we obtain $(\ref{a17})$ and complete the proof of Lemma
\ref{lem10}.
\end{proof}

Although the ball $B(g,r)$ is the left translation by $g$ of
$B(0,r)$, the shape of $B(g,r)$, as a set of points in
$\mathbb{R}^{2n+1}$, much varies with the position of the center
$g$. For $g=(x,t)$ and $h=(y,s)$, let $d_E(g,h)= \big( |x-y|^2+|t-s|^2
\big)^{\frac{1}{2}}$ denote the Euclidean distance between two points
$g$ and $h$. Given a large positive number $A$, there exist points
$g$ and $h$ such that $d(g,h)=1$ and $d_E(g,h) \geq A$, and vice
versa. However, we still have the following covering lemma for $\mathbb{H}^n$.

\begin{lemma}\label{lem11}
Let $\alpha=(\alpha_1, \alpha_2, \cdots, \alpha_{2n+1})\in \mathbb{Z}^{2n+1}$,
$g_{\alpha}= \delta_{\frac{1}{2 \sqrt{Q}}}(\alpha_1, \alpha_2, \cdots, \alpha_{2n+1})$,
and $B_{\alpha} = B(g_{\alpha}, \frac{1}{2})$. 
Then $\mathbb{H}^n= \bigcup_{\alpha}\, B_{\alpha}$ and
$\{ B_{\alpha} \}$ has finite overlaps property; that is,
$1 \leq \sum_{\alpha} \chi_{_{B_\alpha}} (g) \leq C$.
\end{lemma}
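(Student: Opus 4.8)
\emph{Plan of proof.} Write $r=\frac{1}{2\sqrt Q}$, so that $g_\alpha=\delta_r(\alpha)=(r\alpha',\,r^2\alpha_{2n+1})$ with $\alpha'=(\alpha_1,\dots,\alpha_{2n})$ and $r^2=\frac1{4Q}$. The two assertions are proved separately: the covering $\mathbb H^n=\bigcup_\alpha B_\alpha$ (equivalently the lower bound $\sum_\alpha\chi_{_{B_\alpha}}\ge 1$) by an explicit choice of the index $\alpha$, and the finite overlap (the upper bound) by a volume‑packing argument based on the separation of the centers.

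\emph{Covering.} Fix $g=(x,t)\in\mathbb H^n$. The horizontal parts of the centers run over the lattice $r\mathbb Z^{2n}$ with spacing $r$, so we may choose $\alpha_1,\dots,\alpha_{2n}\in\mathbb Z$ such that, with $a:=r\alpha'$, one has $|x_i-a_i|\le r/2$ for $i=1,\dots,2n$; hence $|x-a|^2\le 2n\,(r/2)^2=\frac{n}{8Q}$. A direct computation with the group law gives
\[
g_\alpha^{-1}g=\Bigl(x-a,\ t-r^2\alpha_{2n+1}+2\sum_{j=1}^n(a_jx_{n+j}-a_{n+j}x_j)\Bigr).
\]
Now $a$ is fixed; set $s:=t+2\sum_{j=1}^n(a_jx_{n+j}-a_{n+j}x_j)$ and choose $\alpha_{2n+1}\in\mathbb Z$ with $|s-r^2\alpha_{2n+1}|\le r^2/2=\frac1{8Q}$, which is possible since $r^2\mathbb Z$ has spacing $r^2$. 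Then
\[
|g_\alpha^{-1}g|^4=|x-a|^4+|s-r^2\alpha_{2n+1}|^2\le\Bigl(\frac{n}{8Q}\Bigr)^2+\Bigl(\frac1{8Q}\Bigr)^2=\frac{n^2+1}{64Q^2}.
\]
Since $Q=2n+2$ we have $n^2+1\le(n+1)^2<16(n+1)^2=4Q^2$, so $|g_\alpha^{-1}g|^4<\frac1{16}$, i.e. $g\in B(g_\alpha,\tfrac12)=B_\alpha$. This proves $\mathbb H^n=\bigcup_\alpha B_\alpha$ and the lower bound.

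\emph{Separation and finite overlap.} The dilation $\delta_r$ is an automorphism of $\mathbb H^n$ and $|\delta_r h|=r|h|$; hence for $\alpha\ne\beta$, $g_\alpha^{-1}g_\beta=\delta_r(\alpha^{-1}\beta)$, where $\alpha^{-1}\beta$ is again a nonzero point of $\mathbb H^n$ with integer coordinates (its horizontal part lies in $\mathbb Z^{2n}$ and its central coordinate in $\mathbb Z$, by the group law). Any nonzero $\gamma\in\mathbb Z^{2n+1}$ satisfies $|\gamma|\ge1$: if its horizontal part is nonzero, its Euclidean norm is $\ge1$; otherwise its central coordinate is a nonzero integer. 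Thus $d(g_\alpha,g_\beta)=|g_\alpha^{-1}g_\beta|\ge r$ for all $\alpha\ne\beta$. For the overlap, note $g\in B_\alpha$ iff $g_\alpha\in B(g,\tfrac12)$; by the separation just shown, the balls $B(g_\alpha,r/2)$ with $g_\alpha\in B(g,\tfrac12)$ are pairwise disjoint and all lie in $B(g,\tfrac12+\tfrac r2)$. Comparing volumes via $|B(w,\rho)|=b_1\rho^Q$ yields
\[
\#\{\alpha:\ g\in B_\alpha\}\le\frac{b_1\bigl(\tfrac12+\tfrac r2\bigr)^Q}{b_1(r/2)^Q}=\Bigl(1+\frac1r\Bigr)^Q=(1+2\sqrt Q)^Q=:C,
\]
a constant depending only on $Q$, which gives the upper bound and completes the proof.

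\emph{Main obstacle.} The only delicate point is the covering step: because of the twist in the Heisenberg group law one cannot pick the $2n+1$ coordinates of $\alpha$ independently — the horizontal ones must be fixed first, and only then is the needed central coordinate determined, after which one verifies the elementary inequality $n^2+1<4Q^2$. The separation and packing arguments are then routine.
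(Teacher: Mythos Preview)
Your proof is correct and follows the same strategy as the paper's proof, only spelled out in full detail: the paper just asserts that for any $g$ some $g_\alpha$ satisfies $|g_\alpha^{-1}g|<\tfrac12$ and that distinct centers satisfy $|g_\alpha^{-1}g_{\alpha'}|\ge \tfrac{1}{2\sqrt Q}$, leaving both computations and the passage from separation to bounded overlap to the reader. Your explicit choice of $\alpha$ (horizontal coordinates first, then the central one) and the volume-packing bound are exactly the routine verifications the paper omits.
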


\begin{proof}
Lemma \ref{lem11} is almost obvious. In fact, for any $g \in
\mathbb{H}^n$, there exists some $g_{\alpha}$ such that
$|g^{-1}_{\alpha}g| < \frac{1}{2}$. On the other hand, for any
$g_{\alpha} \neq g_{\alpha'}$, $|g^{-1}_{\alpha} g_{\alpha'}| \geq
\frac{1}{2 \sqrt{Q}}$.
\end{proof}

Now we give the atomic decomposition of $h^p(\mathbb{H}^n)$.

\begin{definition}\label{def4}
{\rm Let $0<p \leq 1 <q \leq \infty$. A function $a\in L^q(\mathbb{H}^n)$ is called an
$h^{p,q}$-atom if the following conditions hold:
\begin{itemize}
\item [\rm(i)] $\mathrm{supp}\, a \subset B(g_0,r),$
\item [\rm(ii)] $\big\| a \big\|_{L^{q}} \leq \big| B(g_0,r) \big|^{\frac{1}{q}-\frac{1}{p}},$
\item [\rm(iii)] if $\ r < \frac{1}{2},$
                 then $\displaystyle\int_{B(g_0,r)} a(g) g^I\, dg =0\ $ for $\ d(I) < N_p.$
\end{itemize}}
\end{definition}

\begin{theorem}\label{thm4}
Let $0<p \leq 1 <q \leq \infty$. A distribution
$f \in h^p(\mathbb{H}^n)$ if and only if $\,f$ can be written as
$f= \sum_j \lambda_j\, a_j$ converging in the sense of
distributions and in $h^p(\mathbb{H}^n)$ norm, where $a_j$ are $h^{p,q}$-atoms and
$\sum_j |\lambda_j|^p < \infty$. Moreover,
\begin{eqnarray*}
\big\| f \big\|^p_{h^p} \sim \inf \bigg \{ \sum_j |\lambda_j|^p \bigg \},
\end{eqnarray*}
where the infimum is taken over all atomic decompositions of $\,f$
into $h^{p,q}$-atoms.
\end{theorem}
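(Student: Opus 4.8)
The plan is to prove Theorem~\ref{thm4} in two directions, modeling the argument on the classical atomic decomposition of $H^p$ (Proposition~\ref{pro2}) but paying attention to the local nature of the maximal function, which allows large balls ($r\ge \frac12$) to carry atoms without any cancellation condition.

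\textbf{The easy direction: atoms are in $h^p$.} First I would show that each $h^{p,q}$-atom $a$ supported in $B(g_0,r)$ satisfies $\|\widetilde M^+_{(N_p)} a\|_{L^p}\le C$ with $C$ independent of the atom. If $r<\frac12$, the atom has the full vanishing moments for $d(I)<N_p$, so it is (a fixed multiple of) an ordinary $H^{p,q}$-atom and the bound follows from Proposition~\ref{pro2} (indeed $\widetilde M^+_{(N_p)}a\le M^+_{(N_p)}a$ pointwise). If $r\ge\frac12$, there is no cancellation, but now the maximal function only uses dilations $\phi_t$ with $t\le 1$: for $g$ with $d(g,g_0)\le 2r$ one estimates $\widetilde M^+_{(N_p)}a(g)\le C\,M_{HL}a(g)$ (Hardy--Littlewood maximal), and $\|M_{HL}a\|_{L^p}$ over a ball of radius $\sim r$ is controlled using $\|a\|_{L^q}\le|B(g_0,r)|^{1/q-1/p}$ and Hölder; for $g$ with $d(g,g_0)>2r$ the support of $\phi_t(g^{-1}\cdot)$ with $t\le1$ never meets $B(g_0,r)$ once $d(g,g_0)$ is large (since $r\ge\frac12$, the relevant region is bounded), so the contribution is again an integral of $|a|$ against a rapidly decaying kernel and is summable. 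Then $\|\sum_j\lambda_j a_j\|_{h^p}^p\le\sum_j|\lambda_j|^p\|\widetilde M^+_{(N_p)}a_j\|_{L^p}^p$ by $p$-subadditivity, and convergence in $\mathscr S'$ is immediate.

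\textbf{The hard direction: $f\in h^p$ decomposes into atoms.} Here I would use Lemma~\ref{lem10} with a fixed Schwartz $\psi$ having $\int\psi=1$ and vanishing moments up to order $m=(N_p+1)(Q+2)$: write $f=f\ast\psi+(f-f\ast\psi)$. By Lemma~\ref{lem10}, $f-f\ast\psi\in H^p(\mathbb H^n)$ with $\|f-f\ast\psi\|_{H^p}\le C\|f\|_{h^p}$, so Proposition~\ref{pro2} gives $f-f\ast\psi=\sum_j\lambda_j a_j$ with $H^{p,q}$-atoms $a_j$ and $\sum|\lambda_j|^p\le C\|f\|_{h^p}^p$. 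Every $H^{p,q}$-atom is in particular an $h^{p,q}$-atom (conditions (i),(ii) are identical; (iii) is only stronger), so this piece is already atomically decomposed. It remains to handle $f\ast\psi$. Since $\psi\in\mathscr S$, $f\ast\psi$ is a smooth (slowly growing) function, and one has the pointwise bound $|f\ast\psi(g)|\le C\,\widetilde M^+_{(N_p)}f(g)$ (because $\psi=\psi_1$ is an admissible test function up to normalization of its seminorm), hence $f\ast\psi\in L^p$ with $\|f\ast\psi\|_{L^p}\le C\|f\|_{h^p}$; more is true---$Y^I(f\ast\psi)=f\ast(Y^I\psi)$ is likewise controlled by $\widetilde M^+_{(N_p)}f$, so $f\ast\psi$ lies in an $L^p$-Sobolev-type class. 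Now apply the covering Lemma~\ref{lem11}: pick a smooth partition of unity $\{\eta_\alpha\}$ subordinate to $\{B_\alpha=B(g_\alpha,\frac12)\}$ (with $\sum_\alpha\eta_\alpha\equiv1$, $\mathrm{supp}\,\eta_\alpha\subset B_\alpha$, bounded derivatives uniformly in $\alpha$, which exists by left-translating a fixed bump as the $g_\alpha$ are a lattice) and set $b_\alpha=(f\ast\psi)\,\eta_\alpha$. Each $b_\alpha$ is supported in the ball $B_\alpha$ of radius exactly $\frac12$, so \emph{no moment condition is required}; we only need $\|b_\alpha\|_{L^q}\le\lambda_\alpha|B_\alpha|^{1/q-1/p}$ with $\sum_\alpha\lambda_\alpha^p<\infty$. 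For $q=\infty$ one can bound $\|b_\alpha\|_{L^\infty}\le\sup_{B_\alpha}|f\ast\psi|$ and use the finite-overlap estimate $\sum_\alpha(\sup_{B_\alpha}|f\ast\psi|)^p\le C\int(\widetilde M^+_{(N_p)}f)^p$; for general $q$ one interpolates via $\|b_\alpha\|_{L^q}^q\le\|b_\alpha\|_{L^\infty}^{q-1}\|b_\alpha\|_{L^1}$ or argues directly with the $L^q$-Sobolev control. Setting $\lambda_\alpha=C(\int_{B_\alpha}(\widetilde M^+_{(N_p)}f)^q)^{1/q}|B_\alpha|^{1/p-1/q}$ (up to constants), the finite overlap of $\{B_\alpha\}$ gives $\sum_\alpha\lambda_\alpha^p\le C\|f\|_{h^p}^p$, and $a_\alpha=b_\alpha/\lambda_\alpha$ is a normalized $h^{p,q}$-atom. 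Combining, $f=\sum_j\lambda_ja_j+\sum_\alpha\lambda_\alpha a_\alpha$ is the desired decomposition, with control of $\sum|\lambda|^p$ by $\|f\|_{h^p}^p$; the convergence in $h^p$-norm follows from the easy direction applied to tails.

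\textbf{Main obstacle.} The delicate point is the treatment of the ``low-frequency'' part $f\ast\psi$: one must verify that multiplying the smooth function $f\ast\psi$ by the cutoffs $\eta_\alpha$ produces functions whose $L^q$ norms are square-summable in the $p$-sense against $\|f\|_{h^p}$, using only the finite-overlap property and the pointwise domination of $f\ast\psi$ (and its invariant derivatives) by $\widetilde M^+_{(N_p)}f$. The uniformity in $\alpha$---of the partition of unity, of the seminorm $\|\psi\|_{(N_p)}$, and of the constants in passing between $L^\infty$ and $L^q$ on balls of fixed radius---is what makes the argument work, and checking it carefully (especially for $q<\infty$, where one needs an honest $L^q$ bound rather than just an $L^\infty$ one) is the part that requires the most care. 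Everything else is a routine adaptation of Folland--Stein's proof of Proposition~\ref{pro2} together with Lemma~\ref{lem10}.
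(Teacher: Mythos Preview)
Your proof follows the paper's exactly: split $f=(f-f\ast\psi)+f\ast\psi$ via Lemma~\ref{lem10}, decompose the first piece in $H^p$ by Proposition~\ref{pro2}, and chop the second piece against the balls $B_\alpha$ of Lemma~\ref{lem11} (the paper uses the non-smooth $\xi_\alpha=\chi_\alpha/\sum_\beta\chi_\beta$ rather than a smooth $\eta_\alpha$, and for large-ball atoms it invokes Proposition~\ref{pro3} with a compactly supported $\phi$ instead of your direct decay estimate). One correction: take $\lambda_\alpha=|B_\alpha|^{1/p-1/q}\|(f\ast\psi)\xi_\alpha\|_{L^q}$ rather than your formula involving $(\int_{B_\alpha}(\widetilde M^+_{(N_p)}f)^q)^{1/q}$ (which may be infinite since $\widetilde M^+_{(N_p)}f$ is only in $L^p$), and control $\sum_\alpha|\lambda_\alpha|^p$ via the \emph{nontangential} bound $\sup_{B_\alpha}|f\ast\psi|\le\inf_{B_\alpha}\widetilde M_\psi f$, which gives $|B_\alpha|^{1-p/q}\|(f\ast\psi)\xi_\alpha\|_{L^q}^p\le\int_{B_\alpha}(\widetilde M_\psi f)^p$.
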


\begin{proof}
Let $f \in h^p(\mathbb{H}^n)$ and $\psi$ satisfy the conditions in 
Lemma \ref{lem10}. Thus, $f - f \ast \psi \in H^p(\mathbb{H}^n)$ and
$\| f - f \ast \psi \|_{H^p} \leq C\, \|f\|_{h^p}$. 
By the atomic decomposition of $H^p(\mathbb{H}^n)$, 
$f - f \ast \psi = \sum_j \lambda_j\, a_j$, where $a_j$ are $H^{p,q}$-atoms
and $\sum_j |\lambda_j|^p \le C\, \| f - f \ast \psi \|_{H^p}^p$. 
It is clear that an $H^{p,q}$-atom is also an $h^{p,q}$-atom.

Let $\{ B_{\alpha}\}$ be the collection of balls given in Lemma
\ref{lem11} and $\chi_{\alpha}$ denote the characteristic function
of $B_{\alpha}$. Set
\begin{eqnarray}\label{a18}
\xi_{\alpha}(g)= \frac{\chi_{\alpha}(g)}{\sum_{\alpha}
\chi_{\alpha}(g)}
\end{eqnarray}
and write
\begin{eqnarray*}
(f \ast \psi)(g)\, \xi_{\alpha}(g)= \lambda_{\alpha}\,a_{\alpha}(g),
\end{eqnarray*}
where
\begin{eqnarray*}
\lambda_{\alpha}= |B_{\alpha}|^{\frac 1p - \frac 1q}\, \big\| (f \ast \psi)
\xi_{\alpha} \big\|_{L^q} .
\end{eqnarray*}
Then all $a_{\alpha}$ are $h^{p,q}$-atoms and $f \ast \psi =
\sum_{\alpha} \lambda_{\alpha}\, a_{\alpha}$. Moreover,
\begin{eqnarray*}
\sum_{\alpha} |\lambda_{\alpha}|^p = \sum_{\alpha} |B_{\alpha}|^{1-\frac pq} \big\|
(f \ast \psi) \xi_{\alpha} \big\|^p_{L^q} \leq \sum_{\alpha}
\int_{B_{\alpha}} \big ( \widetilde{M}_{\psi}f (g) \big )^p\, dg
\leq  C\, \big\| f \big\|^p_{h^p}.
\end{eqnarray*}

Conversely, let $a$ be an $h^{p,q}$-atom supported on a ball
$B(g_0, r)$. If $r < \frac{1}{2}$, then $a$ is also an
$H^{p,q}$-atom and $\big\| a \big\|^p_{h^p} \leq \big\| a
\big\|^p_{H^p} \leq C$ by Proposition \ref{pro2}. If $r \geq
\frac{1}{2}$, we choose a commutative approximate identity $\phi$
such that $\text{supp}\, \phi \subset B(0, \frac{1}{2})$. Then
$\widetilde{M}^+_{\phi}a$ is supported in $B(g_0, 2r)$ and $\big\|
\widetilde{M}^+_{\phi}a \big\|_{L^{q}} \leq C\, \big\| a
\big\|_{L^{q}}$. By Proposition \ref{pro3},
\begin{eqnarray*}
\big\| a \big\|^p_{h^p} \leq C\, \big\| \widetilde{M}^+_{\phi}a
\big\|^p_{L^p} \leq C\, \big| B(g_0, 2r) \big|^{1-\frac{p}{q}}\,
\big\| \widetilde{M}^+_{\phi}a \big\|^p_{L^q} \leq C\, \big|
B(g_0, r) \big|^{1-\frac{p}{q}}\, \big\| a \big\|^p_{L^{q}} \leq
C\, .
\end{eqnarray*}
If $f= \sum_j \lambda_j\, a_j$, then we have
\begin{eqnarray*}
\big\| f \big\|^p_{h^p} \leq \sum_j |\lambda_j|^p \big\| a_j \big\|^p_{h^p}
\leq C  \sum_j |\lambda_j|^p.
\end{eqnarray*}
The proof of Theorem \ref{thm4} is completed.
\end{proof}

\begin{remark}\label{rem1}
{\rm The restriction $r< \frac{1}{2}$ in the condition (iii) of
Definition \ref{def4} is not essential. By a dilation argument, it
is easy to know that Theorem \ref{thm4} still holds if we replace
$\frac{1}{2}$ by a fixed positive constant $A$.}
\end{remark}

\begin{remark}\label{rem2}
{\rm We should pay attention to the $h^{p,q}$-atoms appearing in the
atomic decomposition of $\,f\in h^p(\mathbb{H}^n)$ without the cancellation
conditions, which come from $f \ast \psi$. 
If $\,f \in h^p(\mathbb{H}^n)$ is supported in a ball $B(g_0, 1)$, 
we can choose $\psi$ satisfying the conditions in Lemma \ref{lem10} and
$\mathrm{supp}\, \psi \subset B(0, 1)$. Then $f \ast \psi$ itself
is multiple of $h^{p,q}$-atom supported on the ball $B(g_0, 2)$.}
\end{remark}

We are mainly interested in the case $p=1$. The Hardy space
$H^1(\mathbb{H}^n)$ can be characterized by the Riesz transforms $R_j$
(cf. \cite{Christ}). That is, $f \in H^1(\mathbb{H}^n)$ if and only if
$f \in L^1(\mathbb{H}^n)$ and $R_jf \in L^1(\mathbb{H}^n),\, j=1,
\cdots, 2n$. Moreover,
\begin{eqnarray}\label{a19}
\big\| f \big\|_{H^1} \sim \big\| f \big\|_{L^1} + \sum_{j=1}^{2n} \big\| R_jf
\big\|_{L^1}.
\end{eqnarray}
The local Hardy space $h^1(\mathbb{H}^n)$ has a similar
characterization. On the Euclidean spaces the local Riesz
transforms characterization of $h^1(\mathbb{R}^n)$ is obtained via
subharmonicity (cf. \cite{Goldberg}). However, this kind of approach
fails on the Heisenberg group as pointed out in \cite{Christ}, so we
will use a different method. Let $\zeta \in
C^{\infty}(\mathbb{H}^n)$ satisfy $0 \leq \zeta (g) \leq 1$,
$\zeta (g)=1$ for $|g| < \frac{1}{2}$ and $\zeta (g)=0$ for $|g| >
1$. We also assume that $\zeta$ is radial; that is, $\zeta (g)$
depends only on $|g|$. Set $R^{[0]}_j(g)= \zeta (g)\, R_j(g)$ and
$R^{\infty}_j(g)= \big ( 1-\zeta (g) \big )\, R_j(g)$. Then we
define the local Riesz transforms by $\widetilde{R}_jf= f \ast
R^{[0]}_j,\, j=1, \cdots, 2n$.

\begin{theorem}\label{thm5}
A function $f \in h^1(\mathbb{H}^n)$ if and only if $f \in
L^1(\mathbb{H}^n)$ and $\widetilde{R}_jf \in L^1(\mathbb{H}^n),\,
j=1, \cdots, 2n$. Moreover,
\begin{eqnarray*}
\big\| f \big\|_{h^1} \sim \big\| f \big\|_{L^1} + \sum_{j=1}^{2n} \big\|
\widetilde{R}_jf \big\|_{L^1}.
\end{eqnarray*}
\end{theorem}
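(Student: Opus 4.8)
The plan is to prove the two directions of the equivalence separately, using Lemma~\ref{lem10} as the bridge between $h^1(\mathbb{H}^n)$ and $H^1(\mathbb{H}^n)$ together with the Riesz transform characterization $(\ref{a19})$ of $H^1(\mathbb{H}^n)$ due to Christ and Geller. For the easy direction, suppose $f\in h^1(\mathbb{H}^n)$. Then $f\in L^1(\mathbb{H}^n)$ (since the local grand maximal function dominates $|f|$ in an averaged sense, or directly from the atomic decomposition of Theorem~\ref{thm4}), and $\widetilde{R}_jf=f\ast R^{[0]}_j$ where $R^{[0]}_j=\zeta R_j$ is a compactly supported Calder\'on--Zygmund-type kernel. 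One checks $\widetilde{R}_jf\in L^1$ by testing on $h^{1,q}$-atoms: for an atom $a$ supported in $B(g_0,r)$, if $r<\tfrac12$ the cancellation condition lets one use the standard $H^1\to L^1$ estimate for $R_j$ (the truncation by $\zeta$ only helps, since $R^{[0]}_j$ still satisfies the size and H\"older bounds near the origin), while if $r\ge\tfrac12$ one uses $\|\widetilde{R}_jf\|_{L^1(B(g_0,Cr))}\le C\|a\|_{L^1}$ from the compact support of $R^{[0]}_j$ and $\|\widetilde{R}_jf\|_{L^1(B(g_0,Cr)^c)}$ is controlled because $R^{[0]}_j$ has compact support so the convolution is again supported near $B(g_0,r)$. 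Summing over the atomic decomposition gives $\sum_j\|\widetilde{R}_jf\|_{L^1}\le C\|f\|_{h^1}$.

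For the converse --- the substantial direction --- suppose $f\in L^1(\mathbb{H}^n)$ with $\widetilde{R}_jf\in L^1(\mathbb{H}^n)$ for all $j$, and we must produce $f\in h^1$ with the norm bound. Following the strategy announced in the introduction, I would decompose $f=\widetilde f+(f-\widetilde f)$ where $\widetilde f$ is chosen to lie in $h^1(\mathbb{H}^n)$ with controlled norm and such that the \emph{global} Riesz transforms $R_j(f-\widetilde f)$ are controlled by the \emph{local} Riesz transforms $\widetilde R_jf$; then $(\ref{a19})$ gives $f-\widetilde f\in H^1\subset h^1$ and we are done. Concretely, using the covering $\{B_\alpha\}=\{B(g_\alpha,\tfrac12)\}$ of Lemma~\ref{lem11} with partition of unity $\{\xi_\alpha\}$ as in $(\ref{a18})$, one sets $\widetilde f=\sum_\alpha (\text{correction term})$ built from the pieces $f\xi_\alpha$: roughly, $\widetilde f$ collects the ``long-range'' part $f\ast R^\infty_j$ data, or equivalently one writes $R_jf=\widetilde R_jf+f\ast R^\infty_j$ and must absorb the tail $f\ast R^\infty_j$. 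Since $R^\infty_j=(1-\zeta)R_j$ is supported in $\{|g|\ge\tfrac12\}$ and there $R_j(g)$ is smooth with $|R_j(g)|\lesssim|g|^{-Q}$ and rapidly controllable derivatives, the operator $f\mapsto f\ast R^\infty_j$ maps $L^1$ boundedly into itself on each scale and, more importantly, maps $L^1$ functions into $h^1$ modulo acceptable errors: a single $L^1$-normalized bump at unit scale has $f\ast R^\infty_j$ decaying like a tail that is itself an $h^1$ molecule. Thus one shows $\|f\ast R^\infty_j\|_{L^1}\le C\|f\|_{L^1}$ and builds $\widetilde f$ so that $\|\widetilde f\|_{h^1}\le C(\|f\|_{L^1}+\sum_j\|\widetilde R_jf\|_{L^1})$ and $R_j(f-\widetilde f)=\widetilde R_jf+(\text{term already in }L^1)$, hence $f-\widetilde f\in L^1$ with $R_j(f-\widetilde f)\in L^1$, so $f-\widetilde f\in H^1(\mathbb{H}^n)$ by $(\ref{a19})$ with $\|f-\widetilde f\|_{H^1}\le C(\|f\|_{L^1}+\sum_j\|\widetilde R_jf\|_{L^1})$.

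The main obstacle is constructing the splitting $f=\widetilde f+(f-\widetilde f)$ rigorously: one needs $\widetilde f$ manifestly in $h^1$ (e.g.\ as an explicit sum of $h^{1,q}$-atoms/molecules with summable coefficients), and simultaneously needs the algebraic identity relating $R_j(f-\widetilde f)$ to $\widetilde R_jf$ to hold with only $L^1$ errors. The delicate point is that convolution with the non-compactly-supported smooth tail $R^\infty_j$ does not obviously preserve $h^1$, so one must exploit the decay and smoothness of $R^\infty_j$ to see that $f\ast R^\infty_j$, when $f$ is $L^1$-normalized and localized at unit scale, is a fixed multiple of an $h^1$-molecule (square-integrable, appropriate decay, and — crucially at scales $\ge\tfrac12$ no moment condition is required for $h^1$-atoms/molecules). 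Handling the interaction between overlapping patches $B_\alpha$ via the finite-overlap property of Lemma~\ref{lem11}, and keeping track that the vanishing-moment conditions are only needed at small scales, is where the technical care concentrates. Once the molecule estimates at unit scale are in hand, summing over $\alpha$ using Lemma~\ref{lem11} and invoking $(\ref{a19})$ closes the argument, and the equivalence of norms follows by combining the two inequalities.
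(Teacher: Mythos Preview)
Your overall architecture for the hard direction is right --- decompose $f=\widetilde f+(f-\widetilde f)$, put $\widetilde f\in h^1$ by hand, and show $f-\widetilde f\in H^1$ via $(\ref{a19})$ --- but there is a genuine gap in how you propose to control the tail. You assert that ``one shows $\|f\ast R^\infty_j\|_{L^1}\le C\|f\|_{L^1}$''. This is false: $R^\infty_j(g)=(1-\zeta(g))R_j(g)$ is supported in $\{|g|\ge\tfrac12\}$ and satisfies $|R^\infty_j(g)|\sim |g|^{-Q}$ there, so $R^\infty_j\notin L^1(\mathbb H^n)$, and convolution with it does not map $L^1$ to $L^1$. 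Your subsequent talk of $f\ast R^\infty_j$ being an ``$h^1$-molecule'' is also beside the point; what is needed is that $(f-\widetilde f)\ast R^\infty_j\in L^1$, not that anything lands in $h^1$.

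The missing idea is an explicit choice of $\widetilde f$ that builds in cancellation. The paper takes $f_\alpha=f\,\xi_\alpha$, sets $\lambda_\alpha=\int f_\alpha$, and defines $\widetilde f=\sum_\alpha \lambda_\alpha a_\alpha$ with $a_\alpha=|B^{**}_\alpha|^{-1}\chi_{B^{**}_\alpha}$. Then $\widetilde f\in h^1$ trivially (each $a_\alpha$ is an $h^{1,\infty}$-atom at unit scale, no moment needed), and the crucial point is that each $f_\alpha-\lambda_\alpha a_\alpha$ has \emph{mean zero}. This cancellation, combined with the smoothness estimate $|R^\infty_j(hg)-R^\infty_j(g)|\le C|h|\,|g|^{-Q-1}$ for $|g|$ large, gives $|(f_\alpha-\lambda_\alpha a_\alpha)\ast R^\infty_j(g)|\le C\|f_\alpha\|_{L^1}\,|g_\alpha^{-1}g|^{-Q-1}$ away from $B^{**}_\alpha$, which \emph{is} integrable. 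Summing over $\alpha$ with finite overlap yields $\|(f-\widetilde f)\ast R^\infty_j\|_{L^1}\le C\|f\|_{L^1}$. One then writes $R_j(f-\widetilde f)=\widetilde R_jf-\widetilde R_j\widetilde f+(f-\widetilde f)\ast R^\infty_j$ and checks directly that $\|\widetilde R_j\widetilde f\|_{L^1}\le C\|f\|_{L^1}$ (here the specific form of $a_\alpha$ matters). Your proposal never isolates this mean-zero mechanism, and without it the tail estimate cannot be closed.

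For the easy direction, your atomic-testing approach is workable but you should note that uniform bounds on atoms do not automatically pass to all of $h^1$ (the Bownik phenomenon); you would need a Lemma~\ref{lem18}-type argument using the weak $(1,1)$ boundedness of $\widetilde R_j$. The paper bypasses this entirely by using Lemma~\ref{lem10}: with $\psi$ as there, $\widetilde R_jf=R_j(f-f\ast\psi)+f\ast\phi$ where $\phi=\psi\ast R_j-R^\infty_j\in L^1$ (the singularities cancel and one gets $|\phi(g)|\le C(1+|g|)^{-Q-1}$), so $\|\widetilde R_jf\|_{L^1}\le C\|f-f\ast\psi\|_{H^1}+C\|f\|_{L^1}\le C\|f\|_{h^1}$ directly.
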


\begin{proof}
Suppose $f \in L^1(\mathbb{H}^n)$ and $\widetilde{R}_jf \in
L^1(\mathbb{H}^n),\, j=1, \cdots, 2n$. Let $\{ B_{\alpha}\} =
\big\{ B(g_{\alpha}, \frac{1}{2}) \big\}$ be the collection of balls
given in Lemma \ref{lem11}. Let $B^{**}_{\alpha}=B(g_{\alpha}, 2)$
and $\chi^{**}_{\alpha}$ is the characteristic function of
$B^{**}_{\alpha}$. Write
\begin{eqnarray*}
f_{\alpha}(g)= f(g) \xi_{\alpha} (g),
\end{eqnarray*}
where $\xi_{\alpha}$ is defined by $(\ref{a18})$. Also set
\begin{eqnarray*}
\widetilde{f}(g)= \sum_{\alpha} \lambda_{\alpha} a_{\alpha}(g),
\end{eqnarray*}
where
\begin{eqnarray*}
a_{\alpha}(g)= \frac{1}{| B^{**}_{\alpha} |}
\chi^{**}_{\alpha}(g)\quad \text{and}\quad \lambda_{\alpha}=
\int_{B_{\alpha}} f_{\alpha}(g)\, dg.
\end{eqnarray*}
It is obvious that all $a_{\alpha}$ are $h^{1, \infty}$-atoms and
\begin{eqnarray*}
\sum_{\alpha} |\lambda_{\alpha}| \leq \sum_{\alpha}
\int_{B_{\alpha}} \big| f(g) \big|\, dg \leq C\, \big\| f \big\|_{L^1}.
\end{eqnarray*}
By Theorem \ref{thm4}, $\widetilde{f} \in h^1(\mathbb{H}^n)$ and
\begin{eqnarray*}
\big\| \widetilde{f} \big\|_{h^1} \leq C\, \big\| f \big\|_{L^1}.
\end{eqnarray*}
We will prove $f- \widetilde{f} \in H^1(\mathbb{H}^n)$ and
\begin{eqnarray}\label{a20}
\big\| f- \widetilde{f} \big\|_{H^1} \leq C\, \bigg( \big\| f \big\|_{L^1} +
\sum_{j=1}^{2n} \big\| \widetilde{R}_jf \big\|_{L^1} \bigg),
\end{eqnarray}
which imply $f \in h^1(\mathbb{H}^n)$ and
\begin{eqnarray*}
\big\| f \big\|_{h^1} \leq \big\| f- \widetilde{f} \big\|_{H^1} + \big\| \widetilde{f}
\big\|_{h^1} \leq C\, \bigg( \big\| f \big\|_{L^1} + \sum_{j=1}^{2n} \big\|
\widetilde{R}_jf \big\|_{L^1} \bigg).
\end{eqnarray*}
Write
\begin{eqnarray}\label{a21}
R_j \big( f- \widetilde{f} \big)(g)= f \ast R^{[0]}_j (g) - \widetilde{f}
\ast R^{[0]}_j (g) + \big( f- \widetilde{f} \big) \ast R^{\infty}_j (g).
\end{eqnarray}
It is well known that $R_j(g)$ is a Calder\'on-Zygmund kernel
satisfying
\begin{itemize}
\item [\rm(a)] $\big| R_j(g) \big| \leq \frac{C}{\; |g|^{Q}},$
\item [\rm(b)] $\big| R_j(hg)- R_j(g) \big| \leq \frac{C\, |h|}{\; |g|^{Q+1}}\ $
                for $\ |h|< \frac{|g|}{2},$
\item [\rm(c)] $\displaystyle\int_{a<|g|<b} R_j(g)\, dg =0\ $ for any $\ 0<a<b< \infty.$
\end{itemize}
If $|g^{-1}_{\alpha} g| \leq 1$ or $|g^{-1}_{\alpha}g| \geq 3$,
then $a_{\alpha} \ast R^{[0]}_j (g)=0$. When $1<|g^{-1}_{\alpha}
g|<3$, it is not difficult to get
\begin{eqnarray*}
\big| a_{\alpha} \ast R^{[0]}_j (g) \big| \leq C\, \log \frac{1}{\big| 2-
|g^{-1}_{\alpha} g| \big|},
\end{eqnarray*}
and hence
\begin{eqnarray*}
\big\| a_{\alpha} \ast R^{[0]}_j \big\|_{L^1} \leq C.
\end{eqnarray*}
It follows that
\begin{eqnarray}\label{a22}
\big\| \widetilde{f} \ast R^{[0]}_j \big\|_{L^1} \leq \sum_{\alpha}
|\lambda_{\alpha}|\, \big\| a_{\alpha} \ast R^{[0]}_j \big\|_{L^1}  \leq C
\sum_{\alpha} |\lambda_{\alpha}| \leq C\, \big\| f \big\|_{L^1}.
\end{eqnarray}
Since $\big| R^{\infty}_j (g) \big| \leq C$, we have
\begin{eqnarray*}
\big| (f_{\alpha}- \lambda_{\alpha} a_{\alpha}) \ast R^{\infty}_j
(g) \big| \leq C\, \big\| f_{\alpha}- \lambda_{\alpha} a_{\alpha}
\big\|_{L^1} \leq C\, \big\| f_{\alpha} \big\|_{L^1}.
\end{eqnarray*}
Moreover, when $|g^{-1}_{\alpha} g| >4$, we have
\begin{eqnarray*}
&& \big| (f_{\alpha}- \lambda_{\alpha} a_{\alpha}) \ast
R^{\infty}_j (g) \big|\\
&&\qquad = \bigg| \int_{B^{**}_{\alpha}} (f_{\alpha}- \lambda_{\alpha}
a_{\alpha})(h)\, \big( R^{\infty}_j (h^{-1}g) - R^{\infty}_j
(g_\alpha^{-1}g) \big)\, dh \bigg| \\
&&\qquad \leq \int_{B^{**}_{\alpha}} \big| (f_{\alpha}- \lambda_{\alpha}
a_{\alpha})(h) \big|\, \big| R^{\infty}_j(h^{-1}g)- R^{\infty}_j
(g_\alpha^{-1}g) \big| \, dh\\
&&\qquad \leq \frac{C}{|g^{-1}_{\alpha}g|^{Q+1}} \int_{B^{**}_{\alpha}}
\big| (f_{\alpha}- \lambda_{\alpha} a_{\alpha})(h) \big|\, dh\\
&&\qquad\leq \frac{C}{|g^{-1}_{\alpha}g|^{Q+1}}\, \big\| f_{\alpha} \big\|_{L^1}
\end{eqnarray*}
that implies
\begin{eqnarray*}
\big\| (f_{\alpha}- \lambda_{\alpha} a_{\alpha}) \ast R^{\infty}_j
\big\|_{L^1} \leq C\, \big\| f_{\alpha} \big\|_{L^1}.
\end{eqnarray*}
Thus,
\begin{equation}\label{a23}
\big\| \big( f- \widetilde{f} \big) \ast R^{\infty}_j \big\|_{L^1} \leq
\sum_{\alpha} \big\| (f_{\alpha}- \lambda_{\alpha} a_{\alpha}) \ast
R^{\infty}_j \big\|_{L^1} \leq C \sum_{\alpha}  \big\| f_{\alpha} \big\|_{L^1}
\leq C\, \big\| f \big\|_{L^1}.
\end{equation}
From $(\ref{a21})$, $(\ref{a22})$ and $(\ref{a23})$ we obtain
\begin{eqnarray}\label{a24}
\big\| R_j \big( f- \widetilde{f} \big) \big\|_{L^1} \leq C\, \Big( \big\| f \big\|_{L^1} +
\big\| \widetilde{R}_jf \big\|_{L^1} \Big).
\end{eqnarray}
Then $(\ref{a20})$ follows from $(\ref{a19})$ and $(\ref{a24})$.

Conversely, suppose $f \in h^1(\mathbb{H}^n)$. We choose $\psi$
satisfying the conditions in Lemma \ref{lem10} and
$\mathrm{supp}\,\psi \subset B(0, 1)$.
Using $(\ref{a19})$ and Lemma \ref{lem10}, we get
\begin{eqnarray}\label{a25}
\big\| R_j(f - f \ast \psi ) \big\|_{L^1} \leq C\, \big\| f - f \ast \psi
\big\|_{H^1} \leq C\, \big\| f \big\|_{h^1}.
\end{eqnarray}
Note that
\begin{eqnarray}\label{a26}
\widetilde{R}_jf = R_j(f - f \ast \psi )+ f \ast \phi,
\end{eqnarray}
where
\begin{eqnarray*}
\phi = \psi \ast R_j - R^{\infty}_j \in C^{\infty}(\mathbb{H}^n).
\end{eqnarray*}
If $|g| >2$, by the same argument as $(\ref{a17})$, we have
\begin{eqnarray*}
\big| \phi (g) \big|= \big| \psi \ast R_j (g) - R^{\infty}_j (g) \big| \leq
C\, (1+ |g|)^{-Q-1}.
\end{eqnarray*}
Thus $\phi \in L^1(\mathbb{H}^n)$ and
\begin{eqnarray}\label{a27}
\big\| f \ast \phi \big\|_{L^1} \leq C\, \big\| f \big\|_{L^1} \leq C\, \big\| f
\big\|_{h^1}.
\end{eqnarray}
From $(\ref{a25})$, $(\ref{a26})$ and $(\ref{a27})$, we obtain
\begin{eqnarray*}
\big\| \widetilde{R}_jf \big\|_{L^1} \leq \big\| R_j(f - f \ast \psi )
\big\|_{L^1} + \big\| f \ast \phi \big\|_{L^1} \leq C\, \big\| f \big\|_{h^1}.
\end{eqnarray*}
The proof of Theorem \ref{thm5} is completed.
\end{proof}

\begin{remark}\label{rem3}
{\rm By a dilation argument, for $0<p \leq 1$ and $k\in \mathbb{Z}$, we
define the scaled local Hardy space $h^p_k(\mathbb{H}^n)$ to be
\begin{eqnarray*}
h^p_k(\mathbb{H}^n)= \big\{ f \in \mathscr {S}'(\mathbb{H}^n):\;
\widetilde{M}_{(N_p),\, k}f \in L^p(\mathbb{H}^n) \big\}
\end{eqnarray*}
with
\begin{eqnarray*}
\big\| f \big\|_{h^p_k} = \big\| \widetilde{M}_{(N_p),\, k}f \big\|_{L^p},
\end{eqnarray*}
where the scaled local maximal functions $\widetilde{M}_k f$'s are
defined by taking supremum over $\,0<r \leq 2^{k}$ instead of
$\,0<r\leq 1:$
\begin{align*}
&\widetilde{M}_{\phi,k}f(g)=\sup_{|g^{-1}h|<r\le 2^k} \big| f*\phi_r(h) \big|,
&&\widetilde{M}^+_{\phi,k}f(g)=\sup_{0<r\le 2^k} \big| f*\phi_r(g) \big|,\\
&\widetilde{M}_{(N),k}f(g)=\sup_{\phi\in\mathscr S \atop \|\phi\|_{(N)}\le 1}
\widetilde{M}_{\phi,k}f(g),
&&\widetilde{M}^+_{(N),k}f(g)=\sup_{\phi\in\mathscr S \atop \|\phi\|_{(N)}\le 1}
\widetilde{M}^+_{\phi,k}f(g).
\end{align*}
In a similar way, we have
\begin{eqnarray*}
\big\| \widetilde{M}_{(N),\, k}f \big\|_{L^p} \sim \big\|
\widetilde{M}^+_{(N),\, k}f \big\|_{L^p} \sim \big\|
\widetilde{M}_{\phi,\, k}f \big\|_{L^p} \sim \big\|
\widetilde{M}^+_{\phi,\, k}f \big\|_{L^p},
\end{eqnarray*}
where $\phi$ is a commutative approximate identity and $N \geq
N_p$ is fixed.

We also have the atomic decomposition of $h^p_k(\mathbb{H}^n)$ as follows.
Let $0<p \leq 1<q \leq \infty$. A function $a\in L^q(\mathbb{H}^n)$ is
called an $h^{p,q}_k$-atom if the following conditions hold:
\begin{itemize}
\item [\rm(i)] $\mathrm{supp}\, a \subset B(g_0,r),$
\item [\rm(ii)] $\big\| a \big\|_{L^{q}} \leq \big| B(g_0,r) \big|^{\frac{1}{q}-\frac{1}{p}},$
\item [\rm(iii)] if $\ r < 2^{k},$ then $\displaystyle\int_{B(g_0,r)} a(g) g^I\, dg =0\ $ for $\ d(I) < N_p.$
\end{itemize}
Then $f \in h^p_k(\mathbb{H}^n)$ if and only if $\,f$ can be
written as $f= \sum_j \lambda_j\, a_j$ converging in the sense of
distributions and in $h^p_k(\mathbb{H}^n)$ norm, where $a_j$ are
$h^{p,q}_k$-atoms and $\sum_j |\lambda_j|^p < \infty$. Moreover,
\begin{eqnarray*}
\big\| f \big\|^p_{h^p_k} \sim \inf \bigg \{ \sum_j |\lambda_j|^p \bigg \},
\end{eqnarray*}
where the infimum is taken over all atomic decompositions of $\,f$
into $h^{p,q}_k$-atoms. Furthermore, if $f \in
h^p_k(\mathbb{H}^n)$ is supported in a ball $B(g_0, 2^{k})$, then
the $h^{p,q}_k$-atom appearing in the atomic decomposition of $\,f$
without the cancellation condition is supported on the
ball $B(g_0, 2^{k+1})$.

In the case $p=1$, $h^1_k(\mathbb{H}^n)$ is also characterized by
the local Riesz transforms as follows. A function $f \in
h^1_k(\mathbb{H}^n)$ if and only if $f \in L^1(\mathbb{H}^n)$ and
$\widetilde{R}^{[k]}_jf \in L^1(\mathbb{H}^n),\, j=1, \cdots, 2n$,
where the local Riesz transforms are defined by
$\widetilde{R}^{[k]}_jf= f \ast R^{[k]}_j$ and $R^{[k]}_j(g)=
\zeta (2^{-k} g)\, R_j(g)$. Moreover,
\begin{eqnarray*}
\big\| f \big\|_{h^1_k} \sim \big\| f \big\|_{L^1} + \sum_{j=1}^{2n} \big\|
\widetilde{R}^{[k]}_jf \big\|_{L^1}.
\end{eqnarray*}}
\end{remark}


\section {Proof of atomic decomposition}

In this section we prove Theorem \ref{thm1}.
First we give a partition of unity related to the auxiliary
function $\rho(g)$. It follows from Lemma \ref{lem2} that $0<
\rho(g)< \infty$ for any $g\in\mathbb H^n$. Therefore,
\begin{eqnarray*}
\mathbb{H}^n= \bigcup_{k=-\infty}^{\infty} \Omega_k
\end{eqnarray*}
where
\begin{eqnarray*} \Omega_k = \{ g \in \mathbb{H}^n:\; 2^{k-1}
< \rho (g) \leq 2^{k} \}.
\end{eqnarray*}
From Lemma \ref{lem4}, we get

\begin{lemma}\label{lem12}
For every $R \geq 2$, if $g \in \Omega_k,\, h \in \Omega_j$ and
$B(g, 2^{k}R) \bigcap B(h, 2^{j}R) \neq \varnothing$, then $|k-j|
\leq C \log_2 R$.
\end{lemma}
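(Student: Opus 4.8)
The plan is a short direct estimate that combines the definition of the sets $\Omega_k$ with the comparison of the auxiliary function at nearby points provided by Lemma \ref{lem4}. First I would record that the hypotheses $g\in\Omega_k$ and $h\in\Omega_j$ mean precisely $2^{k-1}<\rho(g)\le 2^k$ and $2^{j-1}<\rho(h)\le 2^j$, so that $\rho(g)\sim 2^k$, $\rho(h)\sim 2^j$, and hence $\rho(h)/\rho(g)$ is comparable to $2^{j-k}$. The entire task is then to show that $2^{|k-j|}$ is bounded by a fixed power of $R$.

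Next I would use that the two balls intersect. Choosing a point $z\in B(g,2^kR)\cap B(h,2^jR)$ and applying the triangle inequality for the homogeneous norm gives
\[
|h^{-1}g|\le |h^{-1}z|+|z^{-1}g| < 2^{j}R+2^{k}R .
\]
Since the problem is symmetric in $(g,k)$ and $(h,j)$ and $|g^{-1}h|=|h^{-1}g|$, I may assume without loss of generality that $k\ge j$; then $|h^{-1}g|<2^{k+1}R$, and dividing by $\rho(g)>2^{k-1}$ yields $|h^{-1}g|/\rho(g)<4R$, so that $1+|h^{-1}g|/\rho(g)\le 5R$ because $R\ge 1$.

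Now I would invoke the left-hand (lower) inequality of Lemma \ref{lem4}, together with the bounds $\rho(h)\le 2^{j}$ and $\rho(g)>2^{k-1}$:
\[
\frac{1}{C}(5R)^{-l_0}\ \le\ \frac{1}{C}\Bigl(1+\frac{|h^{-1}g|}{\rho(g)}\Bigr)^{-l_0}\ \le\ \frac{\rho(h)}{\rho(g)}\ <\ 2^{\,j-k+1}.
\]
Rearranging gives $2^{\,k-j}\le 2C\,5^{l_0}R^{l_0}$, and taking $\log_2$ gives $k-j\le \log_2\!\bigl(2C\,5^{l_0}\bigr)+l_0\log_2 R$. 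Since $R\ge 2$ forces $\log_2 R\ge 1$, the additive constant is absorbed, and we conclude $|k-j|=k-j\le C\log_2 R$ with, say, $C=\log_2(2C\,5^{l_0})+l_0$.

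This is essentially a one-line computation, so there is no genuine obstacle; the only points needing a little care are the reduction to the case $k\ge j$ (legitimate by the symmetry of the statement and of Lemma \ref{lem4}, using $|g^{-1}h|=|h^{-1}g|$) and the use of the hypothesis $R\ge 2$ to absorb the additive constant into a multiple of $\log_2 R$. Note that only the lower estimate in Lemma \ref{lem4} is used.
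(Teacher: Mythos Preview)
Your proof is correct and follows essentially the same idea as the paper---applying Lemma~\ref{lem4} via a point in the intersection of the two balls. The only minor difference is that the paper compares $\rho(g)$ and $\rho(h)$ indirectly through $\rho(g_0)$ for $g_0$ in the intersection (using both inequalities of Lemma~\ref{lem4}), whereas you bound $|h^{-1}g|$ by the triangle inequality and then compare $\rho(g)$ and $\rho(h)$ directly, which lets you get away with only the lower inequality after the symmetry reduction.
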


\begin{proof}
Choose $g_0 \in B(g, 2^{k}R) \bigcap B(h, 2^{j}R)$ and $g_0 \in
\Omega_{k_0}$. By Lemma \ref{lem4},
\begin{eqnarray*}
&&\frac{\rho(g_0)}{\rho(g)} \leq C \bigg( 1+
\frac{2^{k}R}{\rho(g)} \bigg )^{ \frac{l_0}{l_0+1}} \leq 2^{C
\log_2 R},\\
&&\frac{\rho(g_0)}{\rho(g)} \geq \frac{1}{C} \bigg ( 1+
\frac{2^{k}R}{\rho(g)} \bigg )^{-l_0}\; \geq 2^{-C \log_2 R}.
\end{eqnarray*}
Thus $|k_0 -k| \leq C \log_2 R$. Similarly, $|k_0 -j| \leq C
\log_2 R$. Therefore $|k-j| \leq C \log_2 R$.
\end{proof}

We choose $g_{(k, \alpha)} \in \Omega_k$ such that
\begin{eqnarray*}
\Omega_k \subset \bigcup_{\alpha} B \big( g_{(k, \alpha)},
2^{k-1} \big)
\end{eqnarray*}
and
\begin{eqnarray*}
B \big( g_{(k, \alpha)}, 2^{k-2} \big) \bigcap B \big( g_{(k,
\alpha')}, 2^{k-2} \big) = \varnothing \qquad \text{for}\quad
\alpha \neq \alpha'.
\end{eqnarray*}

From Lemma \ref{lem12}, we get

\begin{lemma}\label{lem13}
For every $(k', \alpha')$ and $R \geq 2$,
\begin{eqnarray*}
\#\, \big\{ (k, \alpha): \; B \big ( g_{(k, \alpha)}, 2^{k}R
\big) \bigcap B \big( g_{(k', \alpha')}, 2^{k'}R \big) \neq
\varnothing \big\} \leq R^C.
\end{eqnarray*}
\end{lemma}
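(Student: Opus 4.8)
The plan is to establish Lemma \ref{lem13} by combining the packing estimate for the centers $g_{(k,\alpha)}$ within a fixed $\Omega_k$ with the control on the range of indices $k$ furnished by Lemma \ref{lem12}. First I would fix $(k',\alpha')$ and $R\ge 2$, and observe that if $B\big(g_{(k,\alpha)},2^kR\big)\cap B\big(g_{(k',\alpha')},2^{k'}R\big)\neq\varnothing$, then since $g_{(k,\alpha)}\in\Omega_k$ and $g_{(k',\alpha')}\in\Omega_{k'}$, Lemma \ref{lem12} (applied with the constant $R$ there replaced by $2R$, say, to absorb the factor coming from $\rho(g_{(k,\alpha)})\le 2^k$ versus $\rho$ on the ball) gives $|k-k'|\le C\log_2 R$. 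Hence the index $k$ ranges over at most $C\log_2 R+1$ values.

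Next, for each such fixed $k$, I would bound the number of admissible $\alpha$. If $B\big(g_{(k,\alpha)},2^kR\big)\cap B\big(g_{(k',\alpha')},2^{k'}R\big)\neq\varnothing$, then using $|k-k'|\le C\log_2 R$ we get $2^{k'}\le 2^{k}R^C$, so by the triangle inequality every such center $g_{(k,\alpha)}$ lies in a ball $B\big(g_{(k',\alpha')},\,C\,2^{k}R^{C}\big)$ of radius comparable to $2^k$ times a power of $R$. On the other hand, the balls $B\big(g_{(k,\alpha)},2^{k-2}\big)$ are pairwise disjoint by construction and each has measure $b_1 2^{(k-2)Q}$, while all of them are contained in a ball of radius $C\,2^{k}R^{C}$, which has measure $b_1\big(C2^kR^C\big)^Q$. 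Counting measures, the number of such $\alpha$ is at most $\big(C2^kR^C/2^{k-2}\big)^Q = C\,R^{CQ}\le R^{C}$ after enlarging $C$.

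Multiplying the two bounds, the total count is at most $(C\log_2 R+1)\cdot R^{C}$, and since $C\log_2 R+1\le R^{C}$ for $R\ge 2$ (again after adjusting $C$), the product is $\le R^{C}$, which is the claimed estimate.

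The main obstacle is purely bookkeeping: making sure the various powers of $R$ and the constants from Lemma \ref{lem12} and from the doubling of the Haar measure are tracked consistently, and in particular that when we pass from $2^{k'}$ to $2^k$ inside the radius we only pick up a power of $R$ (not of $2^k$), so that the disjointness-packing argument genuinely yields a bound depending on $R$ alone. I expect no conceptual difficulty beyond this; the two ingredients — Lemma \ref{lem12} for the $k$-range and the volume/disjointness packing for the $\alpha$-count — fit together directly.
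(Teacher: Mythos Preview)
Your proposal is correct and is precisely the argument the paper has in mind: the paper simply records that Lemma~\ref{lem13} follows from Lemma~\ref{lem12} without writing out the details, and your two-step reasoning (Lemma~\ref{lem12} to bound the range of $k$ by $O(\log_2 R)$, then a volume-packing count using the disjointness of the balls $B(g_{(k,\alpha)},2^{k-2})$ to bound the number of $\alpha$ for each fixed $k$) is exactly the standard way to supply them. The bookkeeping you flag is routine and goes through as you describe.
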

Then we have

\begin{lemma}\label{lem14}
There exists $l_2 >0$ such that, for every $g_{(k', \alpha')}$ and
$l \geq l_2$,
\begin{eqnarray*}
\sum_{(k, \alpha)} \big( 1+ 2^{-k} |\,g^{-1}_{(k', \alpha')}\,
g_{(k, \alpha)}| \big)^{-l} + \sum_{(k, \alpha)} \big( 1+
2^{-k'} |\,g^{-1}_{(k', \alpha')}\, g_{(k, \alpha)}| \big)^{-l}
\leq C.
\end{eqnarray*}
\end{lemma}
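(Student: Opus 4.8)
The plan is to reduce Lemma~\ref{lem14} to Lemma~\ref{lem13} by a dyadic decomposition of the sum according to both the size of the distance $|g_{(k',\alpha')}^{-1}g_{(k,\alpha)}|$ and the size of the index $k$. I will treat the first sum in detail; the second is symmetric (it is the same expression with the roles of the scales $2^{-k'}$ and $2^{-k}$ interchanged, and one may also exploit $\rho(g_{(k,\alpha)})\sim 2^k$, $\rho(g_{(k',\alpha')})\sim 2^{k'}$ together with Lemma~\ref{lem4} to compare the two weights).

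First I would split the index set $\{(k,\alpha)\}$ according to the value of $k-k'$; write $m=k-k'$, so $m$ ranges over $\mathbb Z$. For fixed $m$, I then decompose further by the dyadic annulus containing the point $g_{(k,\alpha)}$ relative to $g_{(k',\alpha')}$: let
\begin{eqnarray*}
E_{m,i}=\big\{(k,\alpha):\ k-k'=m,\ 2^{k}\cdot 2^{i}\le |g_{(k',\alpha')}^{-1}g_{(k,\alpha)}|<2^{k}\cdot 2^{i+1}\big\},\qquad i\in\mathbb Z,
\end{eqnarray*}
together with the ``central'' piece where $|g_{(k',\alpha')}^{-1}g_{(k,\alpha)}|<2^{k}$. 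On $E_{m,i}$ the summand $(1+2^{-k}|g_{(k',\alpha')}^{-1}g_{(k,\alpha)}|)^{-l}$ is comparable to $2^{-il}$ when $i\ge 0$, and is comparable to $1$ when $i<0$. The key point is the cardinality bound: if $(k,\alpha)\in E_{m,i}$ then $g_{(k,\alpha)}\in B(g_{(k',\alpha')},2^{k'}R)$ with $R\sim 2^{m+i}$ (whenever this is $\ge 2$), so by Lemma~\ref{lem13}, $\#E_{m,i}\le R^{C}\le C\,2^{C(m+i)_+}$, where $C$ is the constant from Lemma~\ref{lem13} and $(\cdot)_+$ denotes the positive part; a parallel bound, $\#E_{m,i}\le C\,2^{C(m+i)_+}$ obtained the same way, also bounds the central piece. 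Since the balls $B(g_{(k,\alpha)},2^{k-2})$ are disjoint within $\Omega_k$ for fixed $k$, and $\alpha$-indices with the same $k$ landing in $E_{m,i}$ are automatically counted here, no separate bookkeeping in $\alpha$ is needed beyond Lemma~\ref{lem13}.

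Combining, the first sum is dominated by
\begin{eqnarray*}
\sum_{m\in\mathbb Z}\ \sum_{i\in\mathbb Z}\ \#E_{m,i}\cdot 2^{-l\max(i,0)}\ \le\ C\sum_{m}\sum_{i} 2^{C(m+i)_+}\,2^{-l\max(i,0)}.
\end{eqnarray*}
Here is where one must be careful: the exponent $C$ from Lemma~\ref{lem13} is a fixed constant, while $l$ is at our disposal, so choosing $l_2$ large enough (e.g. $l_2=C+2$, or whatever makes the ensuing geometric series converge) forces absolute convergence. Concretely, for $i\ge 0$ one needs $l>C$ to sum in $i$, and one is left with $\sum_{m}2^{C m_+}$-type tails that must be controlled by observing that $E_{m,i}$ is empty once $m+i$ is very negative (the distance cannot be smaller than the separation of distinct centers, $2^{k-2}$, unless $k=k'$ and $\alpha=\alpha'$) and, crucially, that for $m$ large positive the annulus constraint together with $\rho(g_{(k,\alpha)})\sim 2^{k}$ and Lemma~\ref{lem4} forces $i$ to be bounded below by roughly $-m/(l_0+1)$, so the product $2^{C(m+i)_+}2^{-li}$ still decays in $m$ provided $l_2$ is taken large relative to both $C$ and $l_0$. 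This interplay — playing the free large parameter $l$ against the fixed geometric constant $C$ of Lemma~\ref{lem13} while simultaneously keeping the $m$-sum convergent using the $\rho$-regularity Lemma~\ref{lem4} — is the main obstacle; once the ranges of $m$ and $i$ for which $E_{m,i}\ne\varnothing$ are pinned down, the remaining estimate is a routine double geometric series, and the second sum in the statement is handled identically after swapping the scale $2^{-k'}\leftrightarrow 2^{-k}$.
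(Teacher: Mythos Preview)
The paper gives no detailed proof of Lemma~\ref{lem14}; it simply says ``Then we have'' immediately after Lemma~\ref{lem13}, so the intended argument is the direct one in which you decompose only according to the size of the summand. For the first sum, set
\[
A_j=\big\{(k,\alpha):\ 2^{j}\le 1+2^{-k}|g_{(k',\alpha')}^{-1}g_{(k,\alpha)}|<2^{j+1}\big\},\qquad j\ge 0.
\]
If $(k,\alpha)\in A_j$ then $g_{(k',\alpha')}\in B(g_{(k,\alpha)},2^{k}\cdot 2^{j+1})$, so with $R=2^{j+1}$ the balls in Lemma~\ref{lem13} intersect and hence $\#A_j\le R^{C}=2^{(j+1)C}$. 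The first sum is therefore bounded by $\sum_{j\ge 0}2^{-jl}\,2^{(j+1)C}$, finite as soon as $l>C$; the second sum is handled identically with $2^{-k'}$ in place of $2^{-k}$. No appeal to Lemma~\ref{lem4} is needed.

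Your proposal is correct in outline, but the preliminary split by $m=k-k'$ is an unnecessary detour and is exactly what forces you into the Lemma~\ref{lem4} patch. The point of Lemma~\ref{lem13} is that it bounds the number of $(k,\alpha)$ across \emph{all} values of $k$ simultaneously; by fixing $m$ first you discard this and must then control the $m$-sum by hand. Your repair via Lemma~\ref{lem4} (showing $E_{m,i}=\varnothing$ unless $i\gtrsim m/l_0$ for large positive $m$) does close the argument, but at the cost of a larger threshold $l_2$ --- of order $C(l_0+1)$ rather than just $C$ --- and a substantially longer computation for no gain.
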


Let $B_{(k, \alpha)} = B \big( g_{(k, \alpha)}, 2^{k} \big)$ and
$B^*_{(k, \alpha)} = B \big( g_{(k, \alpha)}, 2^{k+1} \big)$,
and we use these two notations through the article.
We now have the partition of unity.

\begin{lemma}\label{lem15}
There are functions $\xi_{(k, \alpha)} \in C^{\infty}_{c}
(\mathbb{H}^n)$ such that
\begin{itemize}
\item [\rm(a)] $\mathrm{supp}\, \xi_{(k, \alpha)} \subset B_{(k, \alpha)};$
\item [\rm(b)] $0 \leq \xi_{(k, \alpha)}(g) \leq 1\ $ and
               $\ \displaystyle\sum_{(k, \alpha)} \xi_{(k, \alpha)}(g) =1$
               \quad\text{for any} $\, g\in \mathbb{H}^n;$
\item [\rm(c)] $\big| \nabla_{\mathbb{H}^n} \xi_{(k, \alpha)} (g) \big| \leq C\, 2^{-k}$
               \quad\text{for any} $\, g\in \mathbb{H}^n.$
\end{itemize}
\end{lemma}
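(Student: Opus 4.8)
The plan is to obtain the $\xi_{(k,\alpha)}$ by the standard device of normalizing a sum of rescaled bump functions; the only real point is the bounded-overlap bookkeeping, which is exactly what Lemmas~\ref{lem12} and \ref{lem13} are for. Fix once and for all $\eta\in C^\infty_c(\mathbb H^n)$ with $0\le\eta\le1$, $\eta\equiv1$ on $B(0,\frac12)$ and $\mathrm{supp}\,\eta\subset B(0,1)$, and set
\begin{eqnarray*}
\phi_{(k,\alpha)}(g)=\eta\big(\delta_{2^{-k}}(g_{(k,\alpha)}^{-1}g)\big).
\end{eqnarray*}
Then $\phi_{(k,\alpha)}\in C^\infty_c(\mathbb H^n)$, $0\le\phi_{(k,\alpha)}\le1$, $\phi_{(k,\alpha)}\equiv1$ on $B(g_{(k,\alpha)},2^{k-1})$, and $\mathrm{supp}\,\phi_{(k,\alpha)}\subset B_{(k,\alpha)}$. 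Because the $X_j$ are left-invariant and $X_j(\eta\circ\delta_r)=r\,(X_j\eta)\circ\delta_r$, we get $|\nabla_{\mathbb H^n}\phi_{(k,\alpha)}(g)|\le C\,2^{-k}$ with $C$ depending only on $\eta$.

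Next I would set $\Phi=\sum_{(k,\alpha)}\phi_{(k,\alpha)}$ and show $1\le\Phi\le C$. The lower bound is immediate, since the balls $B(g_{(k,\alpha)},2^{k-1})$ cover $\Omega_k$ for each $k$ while $\bigcup_k\Omega_k=\mathbb H^n$, so at every point some $\phi_{(k,\alpha)}$ equals $1$. For the upper bound I would bound the overlap of $\{B_{(k,\alpha)}\}$: if $g$ lies in both $B_{(k,\alpha)}$ and $B_{(k',\alpha')}$ these balls intersect, so after fixing one such index, Lemma~\ref{lem13} with $R=2$ bounds the number of $(k,\alpha)$ with $g\in B_{(k,\alpha)}$ by an absolute constant. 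Thus the sum defining $\Phi$ has at most $C$ nonzero terms near any point, whence $\Phi\in C^\infty(\mathbb H^n)$ and $1\le\Phi\le C$.

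Finally I would take $\xi_{(k,\alpha)}=\phi_{(k,\alpha)}/\Phi$. Then (a) holds because $\mathrm{supp}\,\xi_{(k,\alpha)}\subset\mathrm{supp}\,\phi_{(k,\alpha)}\subset B_{(k,\alpha)}$; (b) holds because $0\le\phi_{(k,\alpha)}\le\Phi$ forces $0\le\xi_{(k,\alpha)}\le1$, while $\sum_{(k,\alpha)}\xi_{(k,\alpha)}\equiv1$ by construction. For (c) I would use the product rule,
\begin{eqnarray*}
\nabla_{\mathbb H^n}\xi_{(k,\alpha)}=\frac{\nabla_{\mathbb H^n}\phi_{(k,\alpha)}}{\Phi}-\frac{\phi_{(k,\alpha)}\,\nabla_{\mathbb H^n}\Phi}{\Phi^2},
\end{eqnarray*}
and bound the two terms with $\Phi\ge1$, $0\le\phi_{(k,\alpha)}\le1$ and $|\nabla_{\mathbb H^n}\phi_{(k,\alpha)}|\le C2^{-k}$. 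The first term is $\le C2^{-k}$. For the second, on $\mathrm{supp}\,\xi_{(k,\alpha)}\subset B_{(k,\alpha)}$ only the boundedly many $\phi_{(k',\alpha')}$ with $B_{(k',\alpha')}\cap B_{(k,\alpha)}\ne\varnothing$ contribute to $\nabla_{\mathbb H^n}\Phi$, and for each of them Lemma~\ref{lem4} (since $|g_{(k,\alpha)}^{-1}g|<2^k\sim\rho(g_{(k,\alpha)})$ forces $\rho(g)\sim\rho(g_{(k,\alpha)})\sim2^k$, and likewise $\rho(g)\sim2^{k'}$) gives $|k-k'|\le C$, so $2^{-k'}\le C2^{-k}$; hence $|\nabla_{\mathbb H^n}\Phi|\le C2^{-k}$ on $B_{(k,\alpha)}$ and the second term is also $\le C2^{-k}$.

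The one place needing care is the overlap bookkeeping in the last two paragraphs — verifying that at each point only boundedly many indices contribute and that all the contributing scales $2^{k'}$ are comparable to $2^k$ — which is precisely the content of Lemmas~\ref{lem12} and \ref{lem13} (equivalently Lemma~\ref{lem4}); everything else is the routine normalization of a smooth partition of unity.
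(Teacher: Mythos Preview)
Your proof is correct and is precisely the standard normalization-of-bumps construction the paper has in mind; the paper itself states Lemma~\ref{lem15} without proof, leaving the construction implicit as a routine consequence of the covering and bounded-overlap properties established in Lemmas~\ref{lem12}--\ref{lem13}. Your bookkeeping via Lemma~\ref{lem13} (with $R=2$) for the overlap bound and via Lemma~\ref{lem4} (or equivalently Lemma~\ref{lem12}) for the comparability $2^{k'}\sim 2^k$ of contributing scales is exactly what is needed.
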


Now we define the local maximal functions with respect to the
semigroups $\big\{ T^L_s \big\}_{s>0}$ and $\{ T_s \}_{s>0}$ by
\begin{eqnarray*}
\widetilde{M}^L_kf(g) &=& \sup _{0<s \leq 4^{k}} \big | T^L_sf(g) \big|,\\
\widetilde{M}_kf(g) &=& \sup _{0<s \leq 4^{k}} \big| T_sf(g)
\big|.
\end{eqnarray*}
We investigate the relation between $\widetilde{M}^L_kf$ and
$\widetilde{M}_kf$. In practice, we study the local maximal
function $\mathcal{M}_kf$ defined by
\begin{eqnarray*}
\mathcal{M}_kf(g) = \sup _{0<s \leq 4^{k}} \big| T^L_sf(g) -
T_sf(g) \big|.
\end{eqnarray*}

\begin{lemma}\label{lem16}
For every $(k, \alpha)$,
\begin{eqnarray*}
\big\| \mathcal{M}_k (\xi_{(k, \alpha)}f) \big\|_{L^1} \leq C\, \big\|
\xi_{(k, \alpha)}f \big\|_{L^1}.
\end{eqnarray*}
\end{lemma}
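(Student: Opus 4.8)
The plan is to exploit the perturbation formula for the difference of the two semigroups together with the pointwise kernel bound \eqref{a8}. First I would write, using the Duhamel (Trotter-type) identity,
\begin{eqnarray*}
T^L_sf(g)-T_sf(g)= \int_{\mathbb{H}^n} \big(K^L_s(g,h)-H_s(g,h)\big) f(h)\, dh,
\end{eqnarray*}
so that for $f$ supported in $B_{(k,\alpha)}=B(g_{(k,\alpha)},2^k)$ only those $h$ with $|h^{-1}g_{(k,\alpha)}|<2^k\le C\rho(g_{(k,\alpha)})$ (by Lemma \ref{lem3} and the definition of $\Omega_k$) contribute. The key point is that on the relevant range $|h^{-1}g|\le A\rho(g)$ Lemma \ref{lem7}, specifically \eqref{a8}, gives
\begin{eqnarray*}
\big| K^L_s(g,h)-H_s(g,h) \big| \leq \frac{C}{\rho(g)^{\delta}\, |h^{-1}g|^{Q-\delta}},
\end{eqnarray*}
with $\delta=2-Q/q_{_0}>0$; crucially this bound is \emph{independent of $s$}, so it survives taking $\sup_{0<s\le 4^k}$.

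Next I would split the integration region according to the position of $g$ relative to the ball $B^*_{(k,\alpha)}=B(g_{(k,\alpha)},2^{k+1})$. For $g$ close to the support, i.e. $|g_{(k,\alpha)}^{-1}g|\le C\,2^k$, I have $\rho(g)\sim\rho(g_{(k,\alpha)})\sim 2^k$ by Lemma \ref{lem4}, so
\begin{eqnarray*}
\mathcal{M}_k(\xi_{(k,\alpha)}f)(g)\le C\, 2^{-k\delta}\int_{B_{(k,\alpha)}} \frac{|\xi_{(k,\alpha)}f(h)|}{|h^{-1}g|^{Q-\delta}}\, dh,
\end{eqnarray*}
and integrating in $g$ over a ball of radius $\sim 2^k$ uses the elementary estimate $\int_{B(0,c2^k)} |h^{-1}g|^{-(Q-\delta)} dg \le C\,2^{k\delta}$, which by Fubini yields the bound $C\|\xi_{(k,\alpha)}f\|_{L^1}$. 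For $g$ far away, $|g_{(k,\alpha)}^{-1}g|=:2^k R$ with $R\ge 2$, I would instead use the faster decay: from \eqref{a7} one also has $K^L_s(g,h)\le C_l\,(1+|h^{-1}g|/\rho(g))^{-l}|h^{-1}g|^{-Q}$ and $H_s(g,h)\le C\,|h^{-1}g|^{-Q}$ plus the Gaussian bound \eqref{a1}; combined with $\rho(g)\sim 2^k R^{l_0/(l_0+1)}$ (Lemma \ref{lem4}) this gives, for $l$ large, a bound like $C_l\,2^{-kQ}R^{-Q-1}$ on $\mathcal{M}_k(\xi_{(k,\alpha)}f)(g)$ times $\|\xi_{(k,\alpha)}f\|_{L^1}$, whose integral over the dyadic annulus $\{|g_{(k,\alpha)}^{-1}g|\sim 2^k R\}$ (measure $\sim 2^{kQ}R^Q$) is summable in $R$ dyadic and contributes $C\|\xi_{(k,\alpha)}f\|_{L^1}$.

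The main obstacle I anticipate is organizing the far-away estimate cleanly: one must choose the exponent $l$ in \eqref{a7} large enough to beat both the volume growth $R^Q$ of the annuli and the growth $\rho(g)^{-\delta}\sim 2^{-k\delta}R^{-\delta l_0/(l_0+1)}$, and simultaneously control $H_s(g,h)$ using the Gaussian factor $e^{-A_0 s^{-1}|h^{-1}g|^2}$ in \eqref{a1} (note $s\le 4^k$ while $|h^{-1}g|\sim 2^kR$, so the exponential gives $e^{-cR^2}$, more than enough decay). Once the kernel difference is controlled uniformly in $s$ by an $s$-free integrable-in-$g$ majorant, the supremum over $s$ is harmless and the $L^1$ bound follows by Fubini; the near region is routine fractional-integral bookkeeping and the far region is a geometric-series sum. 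I would carry out the near region first, then the far region, then combine.
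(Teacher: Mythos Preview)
Your proposal is correct and follows the same near/far decomposition as the paper: estimate \eqref{a8} on $B^*_{(k,\alpha)}$ (with $\rho(g)\sim 2^k$), then a separate bound outside. For the far region, however, the paper's argument is much simpler than what you outline: since $0\le K^L_s\le H_s$ by \eqref{a2}, one has $|K^L_s-H_s|\le H_s$, and then \eqref{a1} with $s\le 4^k$ and $|h^{-1}g|>2^k$ gives directly
\[
\sup_{0<s\le 4^k}\big|K^L_s(g,h)-H_s(g,h)\big|\le C\,2^{-kQ}e^{-A_0 4^{-k}|h^{-1}g|^2},
\]
which is integrable in $g$ over $\{|g_{(k,\alpha)}^{-1}g|\ge 2^{k+1}\}$. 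Your detour through \eqref{a7} and the behavior of $\rho(g)$ via Lemma~\ref{lem4} is unnecessary (and the claimed equivalence $\rho(g)\sim 2^kR^{l_0/(l_0+1)}$ is not what Lemma~\ref{lem4} says; it gives only one-sided bounds with different exponents). If you insist on using \eqref{a7}, note that it contains $\rho(h)^{-1}$ as well, and since $h\in B_{(k,\alpha)}$ you have $\rho(h)\sim 2^k$, which already gives the needed $R^{-l}$ decay without tracking $\rho(g)$.
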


\begin{proof}
By Lemma \ref{lem7}, we have
\begin{eqnarray}
\int_{B^*_{(k, \alpha)}} \big|\mathcal{M}_k (\xi_{(k, \alpha)}f)(g) \big|\,dg
&\leq & C \int_{B^*_{(k, \alpha)}} \int_{B_{(k, \alpha)}}
  \frac{ \big| (\xi_{(k, \alpha)}f)(h) \big|}{\rho(g)^{\delta}|h^{-1}g|^{Q- \delta}}\, dh\, dg \nonumber\\
&\leq & C\, \big\| \xi_{(k, \alpha)}f \big\|_{L^1}. \label{a28}
\end{eqnarray}
By $(\ref{a1})$ and $(\ref{a2})$,
\begin{eqnarray*}
\sup _{0<s \leq 4^{k}} \big| K^L_s(g,h)-H_s(g,h) \big|
\leq C\, 2^{-kQ} e^{-A_0 4^{-k} |h^{-1} g|^2}
\qquad \text{if}\ \ |h^{-1}g| > 2^{k}.
\end{eqnarray*}
Therefore,
\begin{eqnarray}
&&\int_{|g^{-1}_{(k, \alpha)}g| \geq 2^{k+1}}
\big| \mathcal{M}_k (\xi_{(k, \alpha)}f)(g) \big|\, dg\nonumber\\
&&\qquad\leq \int_{|g^{-1}_{(k, \alpha)}g| \geq 2^{k+1}}
\int_{B_{(k,\alpha)}} \big| (\xi_{(k, \alpha)}f)(h) \big|\,
\sup_{0<s \leq 4^{k}} \big| K^L_s(g,h)-H_s(g,h) \big|\, dh\, dg\nonumber\\
&&\qquad\leq C\, \big\| \xi_{(k, \alpha)}f \big\|_{L^1}.\label{a29}
\end{eqnarray}
Lemma \ref{lem16} follows from $(\ref{a28})$ and $(\ref{a29})$.
\end{proof}

Define
\begin{eqnarray*}
\mathcal{M}^L_{(k, \alpha)}f(g) = \sup _{0<s \leq 4^{k}} \big |
T^L_s (\xi_{(k, \alpha)}f) (g) - \xi_{(k, \alpha)}(g)\,T^L_sf(g)
\big |.
\end{eqnarray*}

\begin{lemma}\label{lem17}
\begin{eqnarray*}
\sum_{(k, \alpha)} \big\| \mathcal{M}^L_{(k, \alpha)}f
\big\|_{L^1} \leq C\, \| f \|_{L^1}.
\end{eqnarray*}
\end{lemma}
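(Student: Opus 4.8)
The plan is to pass to the commutator kernel. For $u\in L^1(\mathbb H^n)$ and $g\in\mathbb H^n$,
$$T^L_s(\xi_{(k,\alpha)}u)(g)-\xi_{(k,\alpha)}(g)\,T^L_su(g)=\int_{\mathbb H^n}K^L_s(g,h)\,\big(\xi_{(k,\alpha)}(h)-\xi_{(k,\alpha)}(g)\big)\,u(h)\,dh .$$
Taking absolute values, moving $\sup_{0<s\le 4^{k}}$ inside the integral, and writing $\mathcal K_k(g,h)=\sup_{0<s\le 4^{k}}K^L_s(g,h)$, Tonelli's theorem reduces the lemma to the uniform bound
$$\sup_{h\in\mathbb H^n}\ \sum_{(k,\alpha)}\int_{\mathbb H^n}\mathcal K_k(g,h)\,\big|\xi_{(k,\alpha)}(h)-\xi_{(k,\alpha)}(g)\big|\,dg\ \le\ C .$$
Two inputs drive this. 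From Lemma \ref{lem15}(c) and the mean value inequality on $\mathbb H^n$, $|\xi_{(k,\alpha)}(h)-\xi_{(k,\alpha)}(g)|\le C\min\{1,2^{-k}|h^{-1}g|\}$, and this difference vanishes unless $g\in B_{(k,\alpha)}$ or $h\in B_{(k,\alpha)}$; since $\rho\sim 2^{k}$ on $B_{(k,\alpha)}$ (Lemma \ref{lem4}), the estimates $(\ref{a1})$, $(\ref{a2})$, $(\ref{a7})$ give, on the relevant set, $\mathcal K_k(g,h)\le C|h^{-1}g|^{-Q}$ when $|h^{-1}g|\le 2^{k}$ and $\mathcal K_k(g,h)\le C_l(1+2^{-k}|h^{-1}g|)^{-l}|h^{-1}g|^{-Q}$ for every $l>0$.

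Fix $h$ and let $2^{k_0-1}<\rho(h)\le 2^{k_0}$. I would split $\sum_{(k,\alpha)}$ into the \emph{near} indices, those with $|g_{(k,\alpha)}^{-1}h|<4\cdot 2^{k}$ (in particular those with $h\in B_{(k,\alpha)}$), and the \emph{far} indices, those with $|g_{(k,\alpha)}^{-1}h|\ge 4\cdot 2^{k}$. For a near index, Lemma \ref{lem4} forces $\rho(g_{(k,\alpha)})\sim\rho(h)$, hence $|k-k_0|\le C$, and the pairwise disjointness of the balls $\{B(g_{(k,\alpha)},2^{k-2})\}$ bounds the number of near indices by an absolute constant; for each, a direct computation gives $\int_{\mathbb H^n}\mathcal K_k(g,h)\,|\xi_{(k,\alpha)}(h)-\xi_{(k,\alpha)}(g)|\,dg\le C$, the factor $2^{-k}|h^{-1}g|$ absorbing the singularity $|h^{-1}g|^{-Q}$ on $\{|h^{-1}g|\le 2^{k}\}$ and the Gaussian tail disposing of $\{|h^{-1}g|>2^{k}\}$. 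For a far index, $\xi_{(k,\alpha)}(h)=0$, so $|\xi_{(k,\alpha)}(h)-\xi_{(k,\alpha)}(g)|=\xi_{(k,\alpha)}(g)\le\chi_{B_{(k,\alpha)}}(g)$, and $|h^{-1}g|\sim|g_{(k,\alpha)}^{-1}h|$ on $B_{(k,\alpha)}$; integrating the decay bound for $\mathcal K_k$ over $B_{(k,\alpha)}$ (of volume $\sim 2^{kQ}$) yields
$$\int_{\mathbb H^n}\mathcal K_k(g,h)\,\big|\xi_{(k,\alpha)}(h)-\xi_{(k,\alpha)}(g)\big|\,dg\ \le\ C_l\Big(\frac{2^{k}}{|g_{(k,\alpha)}^{-1}h|}\Big)^{Q+l}\ \le\ C\big(1+2^{-k}|g_{(k,\alpha)}^{-1}h|\big)^{-l}.$$

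Summing over the far indices then amounts to showing $\sum_{(k,\alpha)}(1+2^{-k}|h^{-1}g_{(k,\alpha)}|)^{-l}\le C$ uniformly in $h$ when $l\ge l_2$; this is the analogue of Lemma \ref{lem14} with the arbitrary point $h$ in place of a center $g_{(k',\alpha')}$, and I would deduce it from Lemma \ref{lem14} itself by choosing, via $\Omega_{k_0}\subset\bigcup_\beta B(g_{(k_0,\beta)},2^{k_0-1})$, a center $g_{(k_0,\beta_0)}$ with $|g_{(k_0,\beta_0)}^{-1}h|<2^{k_0-1}$: indices with $|h^{-1}g_{(k,\alpha)}|\ge 2^{k_0}$ satisfy $|h^{-1}g_{(k,\alpha)}|\sim|g_{(k_0,\beta_0)}^{-1}g_{(k,\alpha)}|$ so Lemma \ref{lem14} applies, while the rest have $g_{(k,\alpha)}\in B(g_{(k_0,\beta_0)},2^{k_0+1})$ and are finite in number by Lemma \ref{lem13}. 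Assembling the near and far contributions gives the uniform bound, hence the lemma. The step I expect to be the genuine obstacle is exactly this far-index summation: the bare bound ``$\le C$ per index'' is useless since there are infinitely many far indices, so one must make the algebraic kernel decay cooperate with the $\rho$-comparability (Lemma \ref{lem4}) and the packing geometry, and in particular transfer the counting estimate of Lemma \ref{lem14} from the discrete centers to an arbitrary base point. The remaining steps are routine manipulations of the kernel bounds of Section 3 together with the packing Lemmas \ref{lem12}--\ref{lem15}.
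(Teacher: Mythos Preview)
Your argument is correct and uses the same core ingredients as the paper---the commutator kernel identity, the mean-value bound on $\xi_{(k,\alpha)}$, the pointwise decay $(\ref{a7})$ from Lemma~\ref{lem7}, and the packing Lemmas~\ref{lem12}--\ref{lem14}---but the bookkeeping is organized differently. The paper inserts a \emph{second} partition of unity, writing $f=\sum_{(k',\alpha')}\xi_{(k',\alpha')}f$, and then splits the double sum $\sum_{(k,\alpha)}\sum_{(k',\alpha')}$ into near pairs $\Theta_{(k,\alpha)}$ and far pairs $\Xi_{(k,\alpha)}$; the far estimate lands directly on the discrete form of Lemma~\ref{lem14}, with the center $g_{(k',\alpha')}$ playing the role your point $h$ plays. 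You instead keep the $h$-variable continuous, reduce by Tonelli to a uniform bound on $\sum_{(k,\alpha)}\int \mathcal K_k(g,h)\,|\xi_{(k,\alpha)}(h)-\xi_{(k,\alpha)}(g)|\,dg$, and split the single sum into near and far indices relative to $h$. Your route is slightly more economical---one decomposition instead of two---but it costs you the extra step of transferring Lemma~\ref{lem14} from the discrete centers $g_{(k',\alpha')}$ to an arbitrary base point $h$; your device of anchoring $h$ to a nearby center $g_{(k_0,\beta_0)}$ and invoking Lemma~\ref{lem13} for the remaining finitely many indices handles this correctly. The paper's double decomposition avoids that transfer at the price of carrying the extra index $(k',\alpha')$ throughout. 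In substance the two proofs are the same; yours is a mild streamlining.
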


\begin{proof}
Write
\begin{eqnarray*}
\mathcal P^L_{s,\, (k, \alpha),\, (k', \alpha')}f(g)&=& T^L_s (\xi_{(k,
\alpha)}\,
\xi_{(k', \alpha')} f) (g) - \xi_{(k, \alpha)}(g)\,T^L_s (\xi_{(k', \alpha')}f )(g)\\
&=& \int_{B_{(k', \alpha')}} f(h)\, \xi_{(k', \alpha')}(h) \big (
\xi_{(k, \alpha)}(h)- \xi_{(k, \alpha)}(g) \big ) K^L_s(g,h) \, dh
\end{eqnarray*}
and
\begin{eqnarray*}
\mathcal{M}^L_{(k, \alpha),\, (k', \alpha')}f(g) = \sup _{0<s \leq
4^{k}} \big | \mathcal P^L_{s,\, (k, \alpha),\, (k', \alpha')}f(g) \big |.
\end{eqnarray*}
Set
\begin{eqnarray*}
\Theta_{(k, \alpha)}&=& \big\{ (k', \alpha'):\; |\,g^{-1}_{(k',
\alpha')}\, g_{(k, \alpha)}| \leq
A 2^{k} \big\},\\
\Xi_{(k, \alpha)}&=& \big\{ (k', \alpha'):\; |\,g^{-1}_{(k',
\alpha')}\, g_{(k, \alpha)}| > A 2^{k} \big\},
\end{eqnarray*}
where $A>0$ is a fixed constant. By Lemma \ref{lem13}, the number
of elements in $\Theta_{(k, \alpha)}$ is bounded by a constant
independent of $(k, \alpha)$. By Lemma \ref{lem12}, there exists a
constant $A_1 >0$ such that $B_{(k', \alpha')} \subset B(g_{(k,
\alpha)}, A_1 2^{k})\,$ if $\,(k', \alpha') \in \Theta_{(k,
\alpha)}$. Also by Lemma \ref{lem12}, we can take $A$ large enough
such that $B^*_{(k, \alpha)} \bigcap B^*_{(k', \alpha')}=
\varnothing$ when $(k', \alpha') \in \Xi_{(k, \alpha)}$.

Suppose $(k', \alpha') \in \Theta_{(k, \alpha)}$. Using Lemma
\ref{lem15} (c) and the mean value theorem (cf.
\cite[Theorem 1.41]{Folland-Stein}), together with $(\ref{a1})$
and $(\ref{a2})$, we obtain that
\begin{eqnarray*} &&\sup _{0<s
\leq 4^{k}} \big | \big ( \xi_{(k, \alpha)}(h)-
  \xi_{(k, \alpha)}(g) \big ) K^L_s(g,h) \big |\\
&&\qquad\leq C\, 2^{-k} |h^{-1}g| \sup _{0<s\leq 4^{k}} H_s(g,h)\\
&&\qquad\leq \begin{cases} C\, 2^{-k} |h^{-1}g|^{1-Q}, \ \
  &\text{if}\quad |h^{-1}g| \leq 2^{k},\\
   C\, 2^{-k(Q+1)} |h^{-1}g|\, e^{-A_0 4^{-k} |h^{-1} g|^2}, \ \
  &\text{if}\quad |h^{-1}g|> 2^{k}.\end{cases}
\end{eqnarray*}
Therefore,
\begin{eqnarray}
&&\int_{\mathbb{H}^n} \mathcal{M}^L_{(k, \alpha),\, (k', \alpha')}f(g) \, dg\nonumber\\
&&\qquad\leq \int_{\mathbb{H}^n} \int_{B_{(k', \alpha')}} \big|
f(h)\,\xi_{(k', \alpha')}(h) \big|\, \sup _{0<s \leq 4^{k}}
\big|\big (\xi_{(k, \alpha)}(h)-\xi_{(k, \alpha)}(g) \big) K^L_s(g,h) \big|\, dh\, dg\nonumber\\
&&\qquad\leq C\, \big\| f \big\|_{L^1 (B(g_{(k, \alpha)}, A_1
2^{k}))}.\label{a30}
\end{eqnarray}

Let $(k', \alpha') \in \Xi_{(k, \alpha)}$. It is easy to see that
$\xi_{(k', \alpha')}(h) \xi_{(k, \alpha)}(h)=0$, $\rho(g) \sim
2^k$ and $|h^{-1}g| \sim |\,g^{-1}_{(k', \alpha')}\, g_{(k,
\alpha)}|\,$ for $g \in B_{(k, \alpha)}$ and $h \in B_{(k',
\alpha')}$. By Lemma \ref{lem7}, we get
\begin{eqnarray}
&&\int_{\mathbb{H}^n} \mathcal{M}^L_{(k, \alpha),\, (k', \alpha')}f(g) \, dg\nonumber\\
&&\qquad\leq \int_{B_{(k, \alpha)}} \int_{B_{(k', \alpha')}} \big|
f(h) \big| \frac{C_l\, \xi_{(k', \alpha')}(h)\, \xi_{(k,
\alpha)}(g)} {\big ( 1+|h^{-1}g|\, \rho(g)^{-1} \big )^l |h^{-1}g|^{Q}}\, dh\, dg\nonumber\\
&&\qquad \leq \frac{C} {\big ( 1+2^{-k} |\,g^{-1}_{(k',
\alpha')}\, g_{(k, \alpha)}|\big )^{l}} \int_{B_{(k',\alpha')}}
\big| f(h) \big|\, \xi_{(k', \alpha')}(h) \bigg( \int_{B_{(k, \alpha)}}
\frac {dg}{|h^{-1}g|^Q}\bigg)dh\nonumber\\
&&\qquad\leq C \big ( 1+2^{-k} |\,g^{-1}_{(k', \alpha')}\,
g_{(k,\alpha)}|\big)^{-l}\, \big\| \xi_{(k', \alpha')}f
\big\|_{L^1}.\label{a31}
\end{eqnarray}
We have
\begin{eqnarray}
\sum_{(k, \alpha)} \big\| \mathcal{M}^L_{(k, \alpha)}f
\big\|_{L^1} &\leq& \sum_{(k, \alpha)} \sum_{(k', \alpha')} \big\|
\mathcal{M}^L_{(k, \alpha),\, (k', \alpha')}f \big\|_{L^1}\nonumber\\
&=& \sum_{(k, \alpha)} \sum_{(k', \alpha') \in \Theta_{(k,
\alpha)}} + \sum_{(k, \alpha)} \sum_{(k', \alpha') \in \Xi_{(k,
\alpha)}}= J_1 + J_2.\label{a32}
\end{eqnarray}
By $(\ref{a30})$ and Lemma \ref{lem13}, we get
\begin{eqnarray}\label{a33}
J_1 \leq C \sum_{(k, \alpha)} \big\| f \big\|_{L^1 (B(g_{(k,
\alpha)}, A_1 2^{k}))} \leq C\, \big\| f \big\|_{L^1}.
\end{eqnarray}
Taking $l \geq l_2$, by $(\ref{a31})$ and Lemma \ref{lem14}, we get
\begin{equation} \label{a34}
J_2 \leq C \sum_{(k, \alpha)} \sum_{(k', \alpha') \in \Xi_{(k,
\alpha)}} \big ( 1+2^{-k} |\,g^{-1}_{(k', \alpha')}\, g_{(k,
\alpha)}|\big )^{-l}\, \big\| \xi_{(k', \alpha')}f \big\|_{L^1}
\leq C\, \big\| f \big\|_{L^1}. 
\end{equation}
Combination of $(\ref{a32})$, $(\ref{a33})$ and $(\ref{a34})$
gives Lemma \ref{lem17}.
\end{proof}

We are ready to prove Theorem \ref{thm1}.

{\flushleft \it Proof of Theorem \ref{thm1}.} Given $f \in
H^1_L(\mathbb{H}^n)$, since
\begin{eqnarray*}
\big\| \widetilde{M}_k (\xi_{(k, \alpha)}f) \big\|_{L^1} &\leq&
\big\| \widetilde{M}^L_k (\xi_{(k, \alpha)}f) \big\|_{L^1} +
\big\| \mathcal{M}_k (\xi_{(k, \alpha)}f) \big\|_{L^1}\\
&\leq& \big\| (\xi_{(k, \alpha)}) \widetilde{M}^L_k f \big\|_{L^1}
+ \big\| \mathcal{M}^L_{(k, \alpha)}f \big\|_{L^1} + \big\|
\mathcal{M}_k (\xi_{(k, \alpha)}f) \big\|_{L^1},
\end{eqnarray*}
by Lemma \ref{lem16} and Lemma \ref{lem17},
\begin{eqnarray}\label{a35}
\sum_{(k, \alpha)} \big\| \widetilde{M}_k (\xi_{(k, \alpha)}f)
\big\|_{L^1} \leq C \big ( \big\| M^L f \big\|_{L^1} + \big\| f
\big\|_{L^1} \big ) \leq C\, \big\| f \big\|_{H^1_L}.
\end{eqnarray}
As pointed out in Remark \ref{rem3}, this means every $\xi_{(k,
\alpha)}f \in h^1_k(\mathbb{H}^n)$ and
\begin{eqnarray}\label{a36}
\xi_{(k, \alpha)}f = \sum_{i} \lambda^{(k, \alpha)}_{i}\, a^{(k,
\alpha)}_{i}\qquad\text{in}\ L^1(\mathbb{H}^n),
\end{eqnarray}
where $a^{(k, \alpha)}_{i}$ are $h^{1,q}_k$-atoms and
\begin{eqnarray}\label{a37}
\sum_{i} \big | \lambda^{(k, \alpha)}_{i} \big | \leq C\, \big\|
\widetilde{M}_k (\xi_{(k, \alpha)}f) \big\|_{L^1}.
\end{eqnarray}
We also note that each $h^{1,q}_k$-atom $a^{(k, \alpha)}_{i}$ 
without the cancellation condition is supported on the ball
$B_{(k, \alpha)}^* = B\big( g_{(k, \alpha)}, 2^{k+1} \big)$.
Therefore all $h^{1,q}_k$-atoms $a^{(k,
\alpha)}_{i}$ are $H^{1,q}_L$-atoms. By $(\ref{a36})$,
$(\ref{a37})$ and $(\ref{a35})$, we get
\begin{eqnarray*}
f = \sum_{(k, \alpha)} \sum_{i} \lambda^{(k, \alpha)}_{i}\, a^{(k,
\alpha)}_{i}
\end{eqnarray*}
and
\begin{eqnarray*}
\sum_{(k, \alpha)} \sum_{i} \big | \lambda^{(k, \alpha)}_{i} \big
| \leq C \sum_{(k, \alpha)} \big\| \widetilde{M}_k (\xi_{(k,
\alpha)}f) \big\|_{L^1} \leq C\, \big\| f \big\|_{H^1_L}.
\end{eqnarray*}

To prove the converse, we only have to show that, for every
$H^{1,q}_L$-atom $a\,$,
\begin{eqnarray}\label{a38}
\big\| M^L a \big\|_{L^1} \leq C.
\end{eqnarray}
Let $ a $ be an $H^{1,q}_L$-atom supported on a ball $B(g_0, r)$.
If $r > \rho(g_0)$, we further decompose $a$ as follows. Set
\begin{eqnarray*}
\Theta_{a} = \big\{ (k, \alpha):\; B \big( g_{(k, \alpha)},
\rho(g_{(k, \alpha)}) \big) \bigcap B(g_0, r) \neq \varnothing
\big\}.
\end{eqnarray*}
Then we have
\begin{eqnarray}\label{a39}
\sum_{(k, \alpha) \in \Theta_{a}} \big| B \big( g_{(k, \alpha)},
\rho(g_{(k, \alpha)}) \big) \big| \leq C\, \big| B(g_0, r) \big|.
\end{eqnarray}
In fact, if $\rho(g_{(k, \alpha)}) > r$ and $(k, \alpha) \in
\Theta_{a}$, then $g_{0} \in B \big( g_{(k, \alpha)}, 2\rho(g_{(k,
\alpha)}) \big)$. By Lemma \ref{lem4},
\begin{eqnarray*}
\rho(g_{(k, \alpha)}) \leq C\, \rho(g_0) \leq C\, r.
\end{eqnarray*}
Therefore, $B \big( g_{(k, \alpha)}, \rho(g_{(k, \alpha)}) \big)
\subset B(g_0, C\,r)$, and $(\ref{a39})$ follows from the finite
overlaps property of $B \big( g_{(k, \alpha)}, \rho(g_{(k,
\alpha)}) \big)$. Let $\chi_{(k, \alpha)}$ be the characteristic
function of $B \big( g_{(k, \alpha)}, \rho(g_{(k, \alpha)})
\big)$. Set
\begin{eqnarray*}
\zeta_{(k, \alpha)}(g)= \frac{\chi_{(k, \alpha)} (g)}{\sum\limits_{(k',
\alpha') \in \Theta_{a}} \chi_{(k', \alpha')} (g)}
\end{eqnarray*}
and write
\begin{eqnarray*}
\zeta_{(k, \alpha)}(g)\,a(g)=  \lambda_{(k, \alpha)}\, a_{(k,
\alpha)}(g),
\end{eqnarray*}
where
\begin{eqnarray*}
\lambda_{(k, \alpha)}= \big| B \big( g_{(k, \alpha)}, \rho(g_{(k,
\alpha)}) \big) \big|^{\frac{1}{q'}} \, \big\| \zeta_{(k, \alpha)}
a \big\|_{L^{q}}.
\end{eqnarray*}
Then $a_{(k, \alpha)}$ is an $H^{1,q}_L$-atom supported on $B
\big( g_{(k, \alpha)}, \rho(g_{(k, \alpha)}) \big)$ and
\begin{eqnarray*}
a(g)= \sum_{(k, \alpha) \in \Theta_{a}} \lambda_{(k, \alpha)}\,
a_{(k, \alpha)}(g).
\end{eqnarray*}
By the H\"{o}lder inequality and $(\ref{a39})$,
\begin{eqnarray*}
\sum_{(k, \alpha) \in \Theta_{a}} |\lambda_{(k, \alpha)}| &\leq&
\bigg( \sum_{(k, \alpha) \in \Theta_{a}} \big| B \big( g_{(k,
\alpha)}, \rho(g_{(k, \alpha)}) \big) \big| \bigg)^{\frac{1}{q'}}
\bigg( \sum_{(k, \alpha) \in \Theta_{a}} \big\| \zeta_{(k,
\alpha)} a \big\|^q_{L^{q}} \bigg)^{\frac{1}{q}}\\
&\leq& C\, \big| B(g_0, r) \big|^{\frac{1}{q'}}\, \big\| a
\big\|_{L^{q}} \leq C.
\end{eqnarray*}
Thus, without loss of generality, we may assume that $a$ is an
$H^{1,q}_L$-atom supported on a ball $B(g_0, r)$ satisfying $r
\leq \rho(g_0)$. Let $g_0 \in \Omega_k$. By the same argument of
Lemma \ref{lem16}, we have $\big\| \mathcal{M}_k a \big\|_{L^1}
\leq C \big\| a \big\|_{L^1} \leq C$. It is clear that $\big\|
\widetilde{M}_k\, a \big\|_{L^1} \leq C$. Hence
\begin{eqnarray}\label{a40}
\big\|\widetilde{M}^L_k a \big\|_{L^1} \leq C.
\end{eqnarray}
By $(\ref{a1})$ and $(\ref{a2})$,
\begin{eqnarray*}
\sup_{4^{k} < s < \infty} K^L_s(g,h) \leq C\, 2^{-kQ},
\end{eqnarray*}
which yields
\begin{eqnarray*}
\sup_{4^{k} < s < \infty} \big | T^L_s a(g) \big |  \leq C\,
2^{-kQ}
\end{eqnarray*}
and hence
\begin{eqnarray}\label{a41}
\int_{B(g_0, 2^{k+1})} \sup_{4^{k} < s < \infty} \big | T^L_s a(g)
\big |\, dg \leq C.
\end{eqnarray}
Note that $r \leq 2^k$ and $\rho(h) \sim 2^k$ whenever $h \in
B(g_0, r)$. By Lemma \ref{lem7}, we have
\begin{eqnarray}
&&\int_{|g^{-1}_0 g| \geq 2^{k+1}} \sup_{4^{k} < s < \infty}
\big| T^L_s a(g) \big|\, dg\nonumber\\
&&\qquad \leq C \int_{|g^{-1}_0 g| \geq 2^{k+1}} \int_{B(g_0, r)}
\frac{|a(h)|}{\big ( 1+|h^{-1}g|\, \rho(h)^{-1} \big )
|h^{-1}g|^{Q}}\, dh\, dg \leq C.\label{a42}
\end{eqnarray}
We obtain $(\ref{a38})$ from $(\ref{a40})$, $(\ref{a41})$ and
$(\ref{a42})$. This completes the proof of Theorem \ref{thm1}. \qed

\begin{remark}\label{rem4}
{\rm As we showed above, we may add a restriction in the definition of
$H^{1,q}_L$-atom as follows. If $\,a$ is an $H^{1,q}_L$-atom
supported on a ball $B(g_0, r)$, then $r \leq \rho(g_0)$.}
\end{remark}

\begin{remark}\label{rem5}
{\rm We have seen that $H^1_L(\mathbb{H}^n)$ is closely related to the
local Hardy space $h^1(\mathbb{H}^n)$. From Theorem \ref{thm1},
Theorem \ref{thm4} and Remark \ref{rem1}, it is easy to get the
following inclusion relations. If $V \leq C$ then
$H^1_L(\mathbb{H}^n) \subset h^1(\mathbb{H}^n)$. If $V \geq
\frac{1}{C}$ then  $h^1(\mathbb{H}^n) \subset
H^1_L(\mathbb{H}^n)$. Specifically, $H^1_L(\mathbb{H}^n) =
h^1(\mathbb{H}^n)$ when $V \sim 1$.}
\end{remark}


\section{Riesz transforms characterization }

In this section we deal with the Riesz transforms $R^L_j$ 
associated with the Schr\"odinger operator $L$.
The atomic decomposition is useful to establish the boundedness of
operators on Hardy spaces. But in general, as showed in
\cite{Bownik} (also see \cite{Meyer}), it is not enough to conclude 
that an operator extends to a bounded operator on the whole Hardy space 
by verifying its uniform boundedness on all atoms. The key point is that two quasi-norms
corresponding to finite and infinite atomic decompositions are not
equivalent. We give the following lemma.

\begin{lemma}\label{lem18}
Suppose that $\,T$ is a bounded sublinear operator from $L^1(\mathbb{H}^n)$ to
$L^{1,\infty}(\mathbb{H}^n)$ such that $\big\| Ta \big\|_{L^1} \leq C$
for any $H^{1, q}_L$-atom $a$. Then $\,T$ is bounded from
$H^1_L(\mathbb{H}^n)$ to $L^1(\mathbb{H}^n)$.
\end{lemma}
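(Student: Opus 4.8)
The plan is to combine the atomic decomposition of Theorem~\ref{thm1} with a Fatou-type limiting argument. The subtlety is precisely the one flagged just before the lemma: knowing $\|Ta\|_{L^1}\le C$ on atoms does not by itself let one write $Tf=\sum_j\lambda_j Ta_j$ and sum $L^1$-norms, since a priori the series $\sum_j\lambda_j a_j$ converges only in $H^1_L$ (equivalently in $L^1$), which is not a topology that obviously commutes with $T$ on $L^1$.

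First I would fix, for a given $f\in H^1_L(\mathbb H^n)$, an atomic decomposition $f=\sum_j\lambda_j a_j$ into $H^{1,q}_L$-atoms with $\sum_j|\lambda_j|\le 2\|f\|_{H^1_L}$ (possible by Theorem~\ref{thm1}), the sum converging in $H^1_L$-norm. Since $|g|\le M^Lg$ a.e.\ (because $T^L_sg\to g$ in $L^1$ as $s\to0^+$, hence a.e.\ along a subsequence), one has $\|\cdot\|_{L^1}\le\|\cdot\|_{H^1_L}$, so the partial sums $F_N=\sum_{j\le N}\lambda_j a_j$ also converge to $f$ in $L^1(\mathbb H^n)$.

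Next, on finite sums sublinearity is harmless: $|TF_N|\le\sum_{j\le N}|\lambda_j|\,|Ta_j|$, so by the atomic hypothesis
\[
\|TF_N\|_{L^1}\le\sum_{j\le N}|\lambda_j|\,\|Ta_j\|_{L^1}\le C\sum_j|\lambda_j|\le 2C\,\|f\|_{H^1_L}
\]
uniformly in $N$. On the other hand, the $L^1\to L^{1,\infty}$ boundedness and sublinearity of $T$ give the pointwise bound $\big|\,|TF_N|-|Tf|\,\big|\le|T(F_N-f)|$, and $\|T(F_N-f)\|_{L^{1,\infty}}\le C\|F_N-f\|_{L^1}\to 0$; hence $|TF_N|\to|Tf|$ in measure, and some subsequence $|TF_{N_k}|\to|Tf|$ a.e. Fatou's lemma then yields $\|Tf\|_{L^1}\le\liminf_k\|TF_{N_k}\|_{L^1}\le 2C\,\|f\|_{H^1_L}$, which is the assertion.

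The only genuinely delicate point — and the one place where the full hypothesis, that $T$ maps $L^1$ into $L^{1,\infty}$ rather than merely acting boundedly on atoms, is used — is ensuring $TF_N\to Tf$ in measure, so that the uniform $L^1$-bound on the truncations can be transferred to $Tf$ via Fatou. Everything else is routine bookkeeping with sublinearity and the atomic decomposition.
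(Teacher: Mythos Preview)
Your proof is correct and follows essentially the same route as the paper's: atomic decomposition, uniform $L^1$-bound on partial sums via sublinearity, $L^{1,\infty}$-convergence of $|TF_N|$ to $|Tf|$ yielding an a.e.\ convergent subsequence, and Fatou's lemma to conclude. The paper's argument is slightly terser but identical in structure, including the key observation that $\big|\,|TF_N|-|Tf|\,\big|\le |T(F_N-f)|$.
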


\begin{proof}
Suppose $f \in H^1_L(\mathbb{H}^n)$. By Theorem
\ref{thm1}, we write
\begin{eqnarray*}
f= \sum^{\infty}_{j=1} \lambda_j\, a_j \quad \text{with} \quad
\sum^{\infty}_{j=1} |\lambda_j|< C\, \big\| f \big\|_{H^1_L},
\end{eqnarray*}
where $a_j$ are $H^{1,q}_L$-atoms. Set
\begin{eqnarray*}
f_m= \sum^{m}_{j=1} \lambda_j\, a_j.
\end{eqnarray*}
It is clear that $f_m$ converges to $f$ in $L^1(\mathbb{H}^n)$.
The sublinearity of $T$ yields $\big| |Tf_m|-|Tf| \big| \le |T(f_m-f)|$, and
hence $|Tf_m|$ converges to $|Tf|$ in $L^{1,\infty}(\mathbb{H}^n)$. There
exists a subsequence $\{ |Tf_{m_k}| \}$ such that
$\lim\limits_{k \to \infty} |Tf_{m_k}(g)|=|Tf(g)|$ almost everywhere
$g \in \mathbb{H}^n$. Since
\begin{eqnarray*}
\big\| Tf_{m_k} \big\|_{L^1} \leq \sum^{m_k}_{j=1} |\lambda_j|\,
\big\| Ta_j \big\|_{L^1} \leq C\, \big\| f \big\|_{H^1_L},
\end{eqnarray*}
by Fatou's lemma we get
\begin{eqnarray*}
\big\| Tf \big\|_{L^1} \leq C\, \big\| f \big\|_{H^1_L}.
\end{eqnarray*}
This proves Lemma \ref{lem18}.
\end{proof}

We are going to prove Theorem \ref{thm2}.

{\flushleft \it Proof of Theorem \ref{thm2}.}
By the Calder\'on--Zygmund decomposition (cf.
\cite{Folland-Stein, Stein}), given $f \in
L^1(\mathbb{H}^n)\,$ and $\, \lambda > 0$, we have the
decomposition $f= f_1+f_2$ with $f_2= \sum_k b_k$, such that
\begin{itemize}
\item [\rm(i)] $\big| f_1(g) \big| \leq C\, \lambda\ $ for almost everywhere
               $\ g \in \mathbb{H}^n$;
\item [\rm(ii)] each $b_k$ is supported in a ball $B_k$,
                $\displaystyle\int_{B_k} \big| b_k(g) \big|\, dg \leq C\, \lambda |B_k|$,
                and $\displaystyle\int_{B_k} b_k(g)\, dg=0$;
\item [\rm(iii)] $\{B_k\}$ is a finitely overlapping family and
                 $\displaystyle\sum_k |B_k| \leq \frac{C}{\lambda}\, \big\| f \big\|_{L^1}$.
\end{itemize}
Since $R^L_j$ is bounded on $L^2(\mathbb{H}^n)$, it is clear that
\begin{eqnarray}\label{a43}
\big | \big\{ g \in \mathbb{H}^n :\; |R^L_jf_1(g)| >
\frac{\lambda}{2}\, \big\} \big | \leq \frac{C}{\lambda^2}\,
\big\| f_1 \big\|^2_{L^2}\, \leq \frac{C}{\lambda}\, \big\| f
\big\|_{L^1}.
\end{eqnarray}
Let $B_k=B(g_k, r_k)$. Set $B^*_k=B(g_k, 2r_k)$ and $\Omega =
\bigcup_k B^*_k$. Then
\begin{eqnarray}\label{a44}
| \Omega | \leq C \sum_k |B_k| \leq \frac{C}{\lambda}\, \big\| f
\big\|_{L^1}.
\end{eqnarray}
We only need to consider $R^L_jf_2(g)$ for $g \in \Omega^c$. If
$r_k \geq \rho(g_k)$, then $\rho(h) \leq C\,r_k$ for any $h \in
B_k$. By Lemma \ref{lem8}, we get
\begin{eqnarray*}
\int_{|g^{-1}_k g| \geq 2r_k} \big| R^L_j b_k(g) \big|\, dg \leq
\int_{|g^{-1}_k g| \geq 2r_k} \int_{B_k} \big| R^L_j(g,h) \big|\,
\big| b_k(h) \big|\, dh\,dg \leq C\, \big\| b_k \big\|_{L^1}.
\end{eqnarray*}
If $r_k < \rho(g_k)$, then $\rho(h) \sim \rho(g_k)$ for any $h \in
B_k$. Since $R_j(g)$ is a Calder\'on--Zygmund kernel, by
Lemmas \ref{lem8} and \ref{lem9} we obtain
\begin{eqnarray*}
\int_{|g^{-1}_k g| \geq 2r_k} \big| R^L_j b_k(g) \big|\, dg &\leq&
\int_{2r_k \leq |g^{-1}_k g|< 2\rho(g_k)} \big| R^L_j b_k(g)
\big|\, dg + \int_{|g^{-1}_k g| \geq 2\rho(g_k)} \big| R^L_j b_k(g) \big|\, dg\\
&\leq& \int_{2r_k \leq |g^{-1}_k g|< 2\rho(g_k)} \int_{B_k}
\big| R^L_j(g,h)-R_j(g,h) \big|\, \big| b_k(h) \big|\, dh\,dg\\
&& + \int_{2r_k \leq |g^{-1}_k g|< 2\rho(g_k)} \int_{B_k}
\big| R_j(g,h)-R_j(g,g_k) \big|\, \big| b_k(h) \big|\, dh\,dg\\
&& + \int_{|g^{-1}_k g| \geq 2 \rho(g_k)} \int_{B_k}
\big| R^L_j(g,h)\big|\, \big| b_k(h) \big|\, dh\,dg\\
&\leq& C\, \big\| b_k \big\|_{L^1}.
\end{eqnarray*}
In any case we have
\begin{eqnarray}\label{a45}
\big\| R^L_j b_k \big\|_{L^1((B^*_k)^c)} \leq C\, \big\| b_k
\big\|_{L^1}.
\end{eqnarray}
Then
\begin{eqnarray*}
\int_{\Omega^c} \big| R^L_jf_2(g) \big|\,dg \leq \sum_k \big\|
R^L_j b_k \big\|_{L^1((B^*_k)^c)} \leq C \sum_k \big\| b_k
\big\|_{L^1} \leq C \lambda \sum_k |B_k| \leq C\, \big\| f
\big\|_{L^1}.
\end{eqnarray*}
Therefore
\begin{eqnarray}\label{a46}
\big | \big\{ g \in \Omega^c :\; |R^L_jf_2(g)| >
\frac{\lambda}{2}\, \big\} \big | \leq \frac{C}{\lambda}\, \big\|
f \big\|_{L^1}.
\end{eqnarray}
Theorem \ref{thm2} follows from the combination of $(\ref{a43})$,
$(\ref{a44})$ and $(\ref{a46})$. \qed

The proof of Theorem \ref{thm3} is similar to the one in Theorem \ref{thm1}.
We will keep the notations used in former sections.
As consequences of Lemmas \ref{lem8} and \ref{lem9}, we have

\begin{lemma}\label{lem19}
For every $(k, \alpha)$,
\begin{eqnarray*}
\big\| R^L_j(\xi_{(k, \alpha)}f) \big\|_{L^1((B^*_{(k, \alpha)})^c)} \leq
C\, \big\| \xi_{(k, \alpha)}f \big\|_{L^1}.
\end{eqnarray*}
\end{lemma}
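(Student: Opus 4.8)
The plan is to prove Lemma \ref{lem19} by reducing it, via Fubini's theorem, to a kernel estimate that is uniform in $h$, and then decomposing the resulting integral according to the size of $|h^{-1}g|$ measured against $\rho(h)$. Since $\xi_{(k,\alpha)}f$ is supported in $B_{(k,\alpha)}$, Fubini gives
\[
\big\| R^L_j(\xi_{(k, \alpha)}f) \big\|_{L^1((B^*_{(k, \alpha)})^c)}
\le \int_{B_{(k,\alpha)}} \big| (\xi_{(k,\alpha)}f)(h) \big|
  \bigg( \int_{|g_{(k,\alpha)}^{-1}g|\ge 2^{k+1}} \big| R^L_j(g,h) \big|\, dg \bigg) dh ,
\]
so it suffices to bound the inner integral by a constant independent of $h \in B_{(k,\alpha)}$ and of $(k,\alpha)$.

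First I would record two facts. Since $g_{(k,\alpha)} \in \Omega_k$ we have $\rho(g_{(k,\alpha)}) \sim 2^k$, and since $|g_{(k,\alpha)}^{-1}h| < 2^k$ for $h \in B_{(k,\alpha)}$, Lemma \ref{lem4} gives $\rho(h) \sim 2^k$; fix a constant $\kappa \ge 1$, independent of $(k,\alpha)$, with $2^k/\kappa \le \rho(h) \le \kappa\, 2^k$ for all such $h$. Second, by the triangle inequality for the homogeneous norm, $|g_{(k,\alpha)}^{-1}g| \ge 2^{k+1}$ together with $h \in B_{(k,\alpha)}$ forces $|h^{-1}g| > 2^k$, so the inner integral is at most $\int_{|h^{-1}g| > 2^k} \big| R^L_j(g,h) \big|\, dg$.

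Now fix $A = \kappa + 1$ and decompose $\{\,|h^{-1}g| > 2^k\,\}$ as the annulus $\{\,2^k < |h^{-1}g| \le A\rho(h)\,\}$ (which is nonempty, since $A\rho(h) > 2^k$, and contained in $B(h, A\rho(h))$) together with the outer region $\{\,|h^{-1}g| > A\rho(h)\,\}$. On the outer region, Lemma \ref{lem8} applied with the fixed constant $A$ gives $\int_{|h^{-1}g| \ge A\rho(h)} \big| R^L_j(g,h) \big|\, dg \le C$. On the annulus I would write $R^L_j(g,h) = R_j(g,h) + \big( R^L_j(g,h) - R_j(g,h) \big)$: Lemma \ref{lem9} bounds $\int_{B(h,A\rho(h))} \big| R^L_j(g,h) - R_j(g,h) \big|\, dg \le C$, while the Calder\'on--Zygmund kernel bound $|R_j(g)| \le C|g|^{-Q}$ gives
\[
\int_{2^k < |h^{-1}g| \le A\rho(h)} \big| R_j(g,h) \big|\, dg
\le C \log\frac{A\rho(h)}{2^k} \le C \log(A\kappa) = C ,
\]
using $\rho(h) \le \kappa\, 2^k$. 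Summing the three contributions bounds the inner integral uniformly, and the lemma follows.

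I do not expect a genuine obstacle here: this estimate is the analogue of $(\ref{a45})$ in the proof of Theorem \ref{thm2}, and is in fact easier, because there the block $b_k$ on a ball of radius $r_k < \rho(g_k)$ needed its vanishing moment precisely to control a factor $\log(\rho(g_k)/r_k)$, whereas here the radius $2^k$ of $B_{(k,\alpha)}$ is comparable to $\rho$ throughout the ball, so the ``no cancellation'' annulus contributes only the bounded factor $\log(A\kappa)$. The only care needed is the bookkeeping with the comparabilities $\rho(h)\sim 2^k$ (Lemma \ref{lem4}) and $|h^{-1}g|\sim |g_{(k,\alpha)}^{-1}g|$ on the region in question.
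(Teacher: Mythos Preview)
Your argument is correct and matches the paper's approach: the paper simply states that Lemma~\ref{lem19} is a consequence of Lemmas~\ref{lem8} and~\ref{lem9}, and your Fubini reduction followed by kernel estimates does exactly this. One simplification is available, though: your annulus step invoking Lemma~\ref{lem9} and the Calder\'on--Zygmund bound is unnecessary. Since you already have $\rho(h)\le\kappa\,2^k$, the entire region $\{|h^{-1}g|>2^k\}$ is contained in $\{|h^{-1}g|\ge \rho(h)/\kappa\}$, and a single application of Lemma~\ref{lem8} with the fixed constant $A=1/\kappa$ bounds the whole inner integral at once. (Lemma~\ref{lem9} is really what drives Lemma~\ref{lem20}, not Lemma~\ref{lem19}.)
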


\begin{lemma}\label{lem20}
For every $(k, \alpha)$,
\begin{eqnarray*}
\big\| R^L_j(\xi_{(k, \alpha)}f) - R_j(\xi_{(k, \alpha)}f)
\big\|_{L^1(B^*_{(k, \alpha)})} \leq C\, \big\| \xi_{(k, \alpha)}f \big\|_{L^1}.
\end{eqnarray*}
\end{lemma}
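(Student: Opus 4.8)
The plan is to bound the difference $R^L_j(\xi_{(k,\alpha)}f)-R_j(\xi_{(k,\alpha)}f)$ on $B^*_{(k,\alpha)}$ by writing it in kernel form and exploiting the pointwise difference estimate $(\ref{a13})$ together with the $L^1$-integrability of the difference kernel established in Lemma \ref{lem9}. Concretely, for $g\in B^*_{(k,\alpha)}=B(g_{(k,\alpha)},2^{k+1})$ we have
\begin{eqnarray*}
\bigl(R^L_j(\xi_{(k,\alpha)}f)-R_j(\xi_{(k,\alpha)}f)\bigr)(g)
= \int_{B_{(k,\alpha)}} \bigl(R^L_j(g,h)-R_j(g,h)\bigr)\,\xi_{(k,\alpha)}(h)\,f(h)\, dh,
\end{eqnarray*}
so by Tonelli's theorem
\begin{eqnarray*}
\bigl\| R^L_j(\xi_{(k,\alpha)}f)-R_j(\xi_{(k,\alpha)}f)\bigr\|_{L^1(B^*_{(k,\alpha)})}
\leq \int_{B_{(k,\alpha)}} \bigl|(\xi_{(k,\alpha)}f)(h)\bigr|
\left(\int_{B^*_{(k,\alpha)}} \bigl|R^L_j(g,h)-R_j(g,h)\bigr|\, dg\right) dh.
\end{eqnarray*}
Thus the lemma reduces to showing that the inner integral is bounded by a constant uniformly in $h\in B_{(k,\alpha)}$ and in $(k,\alpha)$.

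The key step is the observation that for $h\in B_{(k,\alpha)}$ one has $\rho(h)\sim 2^k$ by Lemma \ref{lem4} (since $|g_{(k,\alpha)}^{-1}h|<2^k$ and $\rho(g_{(k,\alpha)})\sim 2^k$ on $\Omega_k$), and also $B^*_{(k,\alpha)}\subset B(h, C 2^k)\subset B(h, A\rho(h))$ for a suitable fixed $A>0$. Therefore
\begin{eqnarray*}
\int_{B^*_{(k,\alpha)}} \bigl|R^L_j(g,h)-R_j(g,h)\bigr|\, dg
\leq \int_{B(h, A\rho(h))} \bigl|R^L_j(g,h)-R_j(g,h)\bigr|\, dg \leq C
\end{eqnarray*}
by Lemma \ref{lem9}, with $C$ independent of $h$ and $(k,\alpha)$. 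Plugging this back yields
\begin{eqnarray*}
\bigl\| R^L_j(\xi_{(k,\alpha)}f)-R_j(\xi_{(k,\alpha)}f)\bigr\|_{L^1(B^*_{(k,\alpha)})}
\leq C \int_{B_{(k,\alpha)}} \bigl|(\xi_{(k,\alpha)}f)(h)\bigr|\, dh
= C\,\bigl\|\xi_{(k,\alpha)}f\bigr\|_{L^1},
\end{eqnarray*}
which is the claim.

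I do not expect a serious obstacle here; the content is entirely in Lemma \ref{lem9}, and the only thing to check carefully is the uniformity of the constant $A$ such that $B^*_{(k,\alpha)}\subset B(h,A\rho(h))$ for all $h\in B_{(k,\alpha)}$ — this follows from the ``in particular'' clause of Lemma \ref{lem4} (which gives $\rho(h)\sim\rho(g_{(k,\alpha)})$ when $|g_{(k,\alpha)}^{-1}h|<C\rho(g_{(k,\alpha)})$) combined with $\rho(g_{(k,\alpha)})\in(2^{k-1},2^k]$, so that the radius $2^{k+1}$ of $B^*_{(k,\alpha)}$ and the distance from $h$ to $g_{(k,\alpha)}$ are both $\lesssim \rho(h)$. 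One should also recall that $R^L_j$ and $R_j$ have well-defined kernels off the diagonal (as set up before Lemma \ref{lem8}) so that the kernel representation above is legitimate for $\xi_{(k,\alpha)}f\in L^q\subset L^1$ with compact support; this is the same justification used implicitly in the proof of Theorem \ref{thm2}.
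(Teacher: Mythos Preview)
Your proposal is correct and is precisely the argument the paper has in mind: the authors state Lemma \ref{lem20} (together with Lemma \ref{lem19}) simply as ``consequences of Lemmas \ref{lem8} and \ref{lem9}'' without writing out a proof, and your reduction via Tonelli to the inner integral $\int_{B^*_{(k,\alpha)}}|R^L_j(g,h)-R_j(g,h)|\,dg$, followed by the inclusion $B^*_{(k,\alpha)}\subset B(h,A\rho(h))$ and a direct application of Lemma \ref{lem9}, is exactly the intended deduction. Your care about the uniformity of $A$ (via Lemma \ref{lem4} and $\rho(g_{(k,\alpha)})\sim 2^k$) and about the legitimacy of the kernel representation is appropriate; note in particular that Lemma \ref{lem9} already establishes the local integrability in $g$ of the difference kernel, so the kernel formula for $R^L_j-R_j$ acting on a compactly supported $L^1$ function is justified.
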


Similar to Lemma \ref{lem17}, we have

\begin{lemma}\label{lem21}
\begin{eqnarray*}
\sum_{(k, \alpha)} \big\| R^L_j (\xi_{(k, \alpha)}f) - \xi_{(k,
\alpha)}\,R^L_j f \big\|_{L^1} \leq C\, \big\| f \big\|_{L^1}.
\end{eqnarray*}
\end{lemma}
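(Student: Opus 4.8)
The plan is to follow the scheme of Lemma~\ref{lem17}, with the semigroup kernel $K^L_s(g,h)$ replaced by the Riesz kernel $R^L_j(g,h)$ and the supremum over $s$ deleted. Using the partition of unity $\{\xi_{(k,\alpha)}\}$ of Lemma~\ref{lem15}, I would write $\xi_{(k,\alpha)}f=\sum_{(k',\alpha')}\xi_{(k,\alpha)}\xi_{(k',\alpha')}f$, so that
\begin{eqnarray*}
R^L_j(\xi_{(k,\alpha)}f)-\xi_{(k,\alpha)}\,R^L_jf=\sum_{(k',\alpha')}\mathcal{Q}_{(k,\alpha),\,(k',\alpha')}f,
\end{eqnarray*}
where $\mathcal{Q}_{(k,\alpha),\,(k',\alpha')}f=R^L_j(\xi_{(k,\alpha)}\xi_{(k',\alpha')}f)-\xi_{(k,\alpha)}\,R^L_j(\xi_{(k',\alpha')}f)$. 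Because the weight $\xi_{(k,\alpha)}(h)-\xi_{(k,\alpha)}(g)$ vanishes to first order at $h=g$, the (possible) local term of the principal-value operator $R^L_j$ cancels in this difference, so $\mathcal{Q}_{(k,\alpha),\,(k',\alpha')}f$ is represented by the absolutely convergent integral
\begin{eqnarray*}
\mathcal{Q}_{(k,\alpha),\,(k',\alpha')}f(g)=\int_{B_{(k',\alpha')}} f(h)\,\xi_{(k',\alpha')}(h)\big(\xi_{(k,\alpha)}(h)-\xi_{(k,\alpha)}(g)\big)R^L_j(g,h)\,dh
\end{eqnarray*}
(it is enough to argue first for $f$ in a dense subclass of $L^1(\mathbb{H}^n)$). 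Then, exactly as in Lemma~\ref{lem17}, I would split the sum over $(k',\alpha')$ into the near part $\Theta_{(k,\alpha)}$ and the far part $\Xi_{(k,\alpha)}$, choosing $A$ via Lemma~\ref{lem12} so that $B_{(k',\alpha')}\subset B(g_{(k,\alpha)},A_1 2^k)$ for $(k',\alpha')\in\Theta_{(k,\alpha)}$ and $\xi_{(k,\alpha)}\xi_{(k',\alpha')}\equiv 0$ for $(k',\alpha')\in\Xi_{(k,\alpha)}$.

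For the near terms $(k',\alpha')\in\Theta_{(k,\alpha)}$ one has $\rho(h)\sim 2^k$ on $B_{(k',\alpha')}$, so it suffices to bound, for each such $h$,
\begin{eqnarray*}
\int_{\mathbb{H}^n}\big|\xi_{(k,\alpha)}(h)-\xi_{(k,\alpha)}(g)\big|\,\big|R^L_j(g,h)\big|\,dg\le C.
\end{eqnarray*}
On the set $|h^{-1}g|\ge A\rho(h)$ the $\xi$-difference is $\le 1$, and this is Lemma~\ref{lem8}. On $|h^{-1}g|<A\rho(h)\sim 2^k$, the mean value theorem together with Lemma~\ref{lem15}(c) gives $|\xi_{(k,\alpha)}(h)-\xi_{(k,\alpha)}(g)|\le C\,2^{-k}|h^{-1}g|$; inserting the kernel bound $(\ref{a12})$, the principal part $|h^{-1}g|^{-Q}$ produces $2^{-k}\int_{|h^{-1}g|<C2^k}|h^{-1}g|^{1-Q}\,dg\le C$, while for the potential part I would use $|h^{-1}w|\sim|h^{-1}g|$ for $w\in B(g,|h^{-1}g|/2)$, Fubini, the elementary bound $|h^{-1}w|^{3-Q}\le C\,2^k|h^{-1}w|^{2-Q}$ (valid since $Q\ge 4$), and finally Lemma~\ref{lem5}. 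Integrating against $|f(h)|$ and summing the boundedly many $(k',\alpha')\in\Theta_{(k,\alpha)}$ then yields $\sum_{(k',\alpha')\in\Theta_{(k,\alpha)}}\|\mathcal{Q}_{(k,\alpha),\,(k',\alpha')}f\|_{L^1}\le C\|f\|_{L^1(B(g_{(k,\alpha)},A_1 2^k))}$, the analogue of $(\ref{a30})$.

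For the far terms $(k',\alpha')\in\Xi_{(k,\alpha)}$, $\mathcal{Q}_{(k,\alpha),\,(k',\alpha')}f=-\xi_{(k,\alpha)}\,R^L_j(\xi_{(k',\alpha')}f)$ is supported in $B_{(k,\alpha)}$, and there $|h^{-1}g|\sim D:=|g_{(k',\alpha')}^{-1}g_{(k,\alpha)}|>A2^k$ and $\rho(g)\sim 2^k$. From $(\ref{a12})$, Lemma~\ref{lem4} (to trade the decay in $|h^{-1}g|\rho(h)^{-1}$ for decay in $D/2^k$), Fubini and Lemma~\ref{lem5}, I expect $\int_{B_{(k,\alpha)}}\xi_{(k,\alpha)}(g)\,|R^L_j(g,h)|\,dg\le C_l\,(1+2^{-k}D)^{-l}$ for every $l>0$, hence $\|\mathcal{Q}_{(k,\alpha),\,(k',\alpha')}f\|_{L^1}\le C_l\,(1+2^{-k}D)^{-l}\,\|\xi_{(k',\alpha')}f\|_{L^1}$, the analogue of $(\ref{a31})$. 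Finally, summing over $(k,\alpha)$ as in Lemma~\ref{lem17}, the $\Theta$-contribution is $\le C\sum_{(k,\alpha)}\|f\|_{L^1(B(g_{(k,\alpha)},A_1 2^k))}\le C\|f\|_{L^1}$ by the bounded overlap in Lemma~\ref{lem13}, and the $\Xi$-contribution, after interchanging the order of summation, applying Lemma~\ref{lem14} with $l\ge l_2$, and using $\sum_{(k',\alpha')}\|\xi_{(k',\alpha')}f\|_{L^1}=\|f\|_{L^1}$, is also $\le C\|f\|_{L^1}$. I expect the main obstacle to be the near-diagonal estimate above: unlike the uniformly $|h^{-1}g|^{-Q}$-dominated heat kernel $\sup_s H_s(g,h)$ used in Lemma~\ref{lem17}, $R^L_j(g,h)$ is genuinely singular, so one must fully exploit the first-order vanishing of $\xi_{(k,\alpha)}(h)-\xi_{(k,\alpha)}(g)$ and treat the potential term of $(\ref{a12})$ by a careful Fubini argument; checking that the local principal-value term indeed cancels in $\mathcal{Q}$ is a minor additional point.
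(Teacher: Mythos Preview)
Your proposal is correct and follows essentially the same scheme as the paper's proof: the same integral representation of the commutator $\mathcal{Q}_{(k,\alpha),(k',\alpha')}$, the same $\Theta_{(k,\alpha)}/\Xi_{(k,\alpha)}$ split, the same use of Lemma~\ref{lem8} plus the mean-value estimate and $(\ref{a12})$ for the near part, and the same final summation via Lemmas~\ref{lem13} and~\ref{lem14}. The only minor technical differences are that, for the potential term of $(\ref{a12})$, the paper uses H\"older's inequality together with the $L^{q_0}\!\to\! L^{p_0}$ boundedness of fractional integration (reducing to the $B_{q_0}$ condition and Lemma~\ref{lem3}) in place of your Fubini/Lemma~\ref{lem5} argument, and in the far part it keeps the decay parameter as $2^{-k'}$ (since $\rho(h)\sim 2^{k'}$ directly) rather than converting to $2^{-k}$ via Lemma~\ref{lem4}; both variants work and the overall architecture is identical.
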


\begin{proof}
Given $(k, \alpha)$, we write
\begin{eqnarray*}
\mathcal Q^L_{j,\, (k, \alpha),\, (k', \alpha')}f(g)&=& R^L_j (\xi_{(k,
\alpha)}\, \xi_{(k', \alpha')} f) (g) - \xi_{(k, \alpha)}(g)\,
R^L_j (\xi_{(k', \alpha')}f)(g)\\
&=& \int_{B_{(k', \alpha')}} f(h)\, \xi_{(k', \alpha')}(h) \big(
\xi_{(k, \alpha)}(h)- \xi_{(k, \alpha)}(g) \big) R^L_j(g,h)\, dh.
\end{eqnarray*}
Suppose $(k', \alpha') \in \Theta_{(k, \alpha)}$ and $h \in
B_{(k', \alpha')}$. Then $\rho(h) \sim 2^{k'} \sim 2^{k}$.
By Lemma \ref{lem8},
\begin{eqnarray*}
&& \int_{\mathbb{H}^n} \big| \big( \xi_{(k, \alpha)}(h)-
\xi_{(k, \alpha)}(g) \big) R^L_j(g,h) \big| \, dg\\
&&\qquad \leq 2 \int_{|h^{-1}g| \geq 2^{k'}} \big| R^L_j(g,h) \big|\, dg + C
\int_{B(h, 2^{k'})} 2^{-k'} |h^{-1}g|\, \big| R^L_j(g,h) \big|\, dg\\
&&\qquad\leq C + C \int_{B(h, 2^{k'})} 2^{-k'} |h^{-1} g|\, \big| R^L_j(g,h) \big|\, dg.
\end{eqnarray*}
Using $(\ref{a12})$ with $l=1$, we get
\begin{eqnarray*}
&&\int_{B(h,\, 2^{k'})} 2^{-k'} |h^{-1} g|\, \big| R^L_j(g,h) \big|\, dg\\
&&\qquad \leq C \int_{B(h, 2^{k'})} \frac{2^{-k'}\, dg}{|h^{-1}g|^{Q-1}}
+ C \int_{B(h, 2^{k'})} \bigg( \frac{1}{|h^{-1}g|^{Q-1}}
\int_{B(g, \frac{|h^{-1}g|}{2})} \frac{V(w)\, dw}{|g^{-1}w|^{Q-1}}\bigg)\, dg \\
&&\qquad \leq C + C \int_{B(h, 2^{k'})} \bigg(
\frac{1}{|h^{-1}g|^{Q-1}} \int_{B(g, \frac{|h^{-1}g|}{2})}
\frac{V(w)\, dw}{|g^{-1}w|^{Q-1}} \bigg)\, dg.
\end{eqnarray*}
Note that $p'_{_0}(Q-1) < Q\,$ as $\,q_{_0} >\frac{Q}{2}$. By the H\"older inequality
and the boundedness of fractional integrals, we obtain
\begin{eqnarray}
&&\int_{B(h, 2^{k'})} \bigg( \frac{1}{|h^{-1}g|^{Q-1}}
\int_{B(g, \frac{|h^{-1}g|}{2})} \frac{V(w)\, dw}
 {|g^{-1}w|^{Q-1}} \bigg) \, dg\nonumber\\
&&\qquad \leq \bigg( \int_{B(h, 2^{k'})}
\frac{dg}{|h^{-1}g|^{p'_{_0}(Q-1)}} \bigg )^{\frac{1}{p'_{_0}}} \left(
\int_{B(h, 2^{k'})} \bigg( \int_{B(g, \frac{|h^{-1}g|}{2})}
\frac{V(w)\, dw}{|g^{-1}w|^{Q-1}} \bigg)^{p_{_0}}\, dg  \right)^{\frac{1}{p_{_0}}}\nonumber\\
&&\qquad \leq C 2^{-k'(\frac{Q}{p_{_0}}-1)} \bigg( \int_{B(h,
2^{k'+1})}
V(g)^{q_{_0}}\, dg \bigg)^{\frac{1}{q_{_0}}}\nonumber\\
&&\qquad \leq C 2^{-k'(Q-2)} \int_{B(h, 2^{k'+1})} V(g)\, dg\nonumber\\
&&\qquad \leq C,\label{a47}
\end{eqnarray}
where we used the $B_{q_{_0}}$ condition and Lemma \ref{lem3} in
the last two inequalities. The above three estimates yield
\begin{eqnarray*}
\int_{\mathbb{H}^n} \big| \big ( \xi_{(k, \alpha)}(h)- \xi_{(k,
\alpha)}(g) \big ) R^L_j(g,h) \big| \, dg \leq C.
\end{eqnarray*}
Note that $B_{(k', \alpha')} \subset B(g_{(k, \alpha)}, A_1
2^{k})\,$ when $(k', \alpha') \in \Theta_{(k, \alpha)}$. Hence,
\begin{eqnarray}\label{a48}
\big\| \mathcal Q^L_{j,\, (k, \alpha),\, (k', \alpha')}f \big\|_{L^1} \leq C\, \big\|
\xi_{(k', \alpha')}f \big\|_{L^1} \leq C\, \big\| f \big\|_{L^1 (B(g_{(k,
\alpha)}, A_1 2^{k}))}.
\end{eqnarray}
Let $(k', \alpha') \in \Xi_{(k, \alpha)}$ and $h \in B_{(k',
\alpha')}$. We have $\xi_{(k, \alpha)}(h)=0$, $\rho(h) \sim
2^{k'}$ and $|h^{-1}g| \sim |g^{-1}_{(k', \alpha')}\, g_{(k,
\alpha)}|\,$ for $g \in B_{(k, \alpha)}$. By $(\ref{a12})$, we get
\begin{eqnarray*}
&&\int_{\mathbb{H}^n} \big| \big( \xi_{(k, \alpha)}(h)-
\xi_{(k, \alpha)}(g) \big) R^L_j(g,h) \big| \, dg\\
&&\qquad \leq \int_{B_{(k, \alpha)}} \frac{C_l}{\big( 1+ |h^{-1}g|\,
\rho(h)^{-1} \big)^l} \bigg( \frac{1}{|h^{-1}g|^{Q}} +
\frac{1}{|h^{-1}g|^{Q-1}} \int_{B(g, \frac{|h^{-1}g|}{2})}
\frac{V(w)\, dw}{|g^{-1}w|^{Q-1}} \bigg) \, dg\\
&&\qquad \leq \frac{C}{\big( 1+ 2^{-k'}|g^{-1}_{(k', \alpha')}\, g_{(k,\alpha)}|
\big)^l} \bigg( 1+ \frac{1}{|g^{-1}_{(k', \alpha')}\, g_{(k,
\alpha)}|^{Q-1}} \int_{B_{(k, \alpha)}} \int_{B(g,
\frac{|h^{-1}g|}{2})} \frac{V(w)\, dw}{|g^{-1}w|^{Q-1}}\, dg \bigg).
\end{eqnarray*}
Similar to $(\ref{a47})$, by the H\"older inequality,
the boundedness of fractional integrals and the $B_{q_{_0}}$ condition,
we obtain
\begin{eqnarray*}
&&\frac{1}{|g^{-1}_{(k', \alpha')}\, g_{(k, \alpha)}|^{Q-1}}
\int_{B_{(k, \alpha)}} \int_{B(g,
\frac{|h^{-1}g|}{2})} \frac{V(w)\, dw}{|g^{-1}w|^{Q-1}}\, dg\\
&&\qquad \leq C\, |g^{-1}_{(k', \alpha')}\, g_{(k, \alpha)}|^{2- Q}
\int_{B(g_{(k', \alpha')},\, \frac 94 |g^{-1}_{(k', \alpha')}\, g_{(k,\alpha)}|)} V(g)\, dg\\
&&\qquad \leq C \big( 1+ 2^{-k'}|g^{-1}_{(k', \alpha')}\, g_{(k,\alpha)}| \big)^{l_1},
\end{eqnarray*}
where we used Lemma \ref{lem5} for the last inequality.
Thus,
\begin{eqnarray*}
\int_{\mathbb{H}^n} \big|  \big( \xi_{(k, \alpha)}(h)- \xi_{(k,
\alpha)}(g) \big) R^L_j(g,h) \big|\, dg \leq C \big( 1+
2^{-k'}|g^{-1}_{(k', \alpha')}\, g_{(k, \alpha)}| \big)^{-l+l_1},
\end{eqnarray*}
and hence
\begin{equation}\label{a49}
\big\| \mathcal Q^L_{j,\, (k, \alpha),\, (k', \alpha')}f \big\|_{L^1}
\leq C \big( 1+ 2^{-k'}|g^{-1}_{(k', \alpha')}\, g_{(k, \alpha)}| \big)^{-l
+l_1} \big\| \xi_{(k', \alpha')}f \big\|_{L^1}.
\end{equation}

We have
\begin{eqnarray}
\sum_{(k, \alpha)} \| R^L_j (\xi_{(k, \alpha)}f) - \xi_{(k,
\alpha)}\,R^L_j f \|_{L^1} &\leq& \sum_{(k, \alpha)} \sum_{(k',
\alpha')} \| \mathcal Q^L_{j,\, (k, \alpha),\, (k', \alpha')}f
\|_{L^1}\nonumber\\
&=& \sum_{(k, \alpha)} \sum_{(k', \alpha') \in \Theta_{(k,
\alpha)}} + \sum_{(k, \alpha)} \sum_{(k', \alpha') \in \Xi_{(k,
\alpha)}}\nonumber\\
&=& J_1 + J_2.\label{a50}
\end{eqnarray}
By $(\ref{a48})$ and Lemma \ref{lem13}, we get
\begin{eqnarray}\label{a51}
J_1 \leq C \sum_{(k, \alpha)} \big\| f\big \|_{L^1 (B(g_{(k, \alpha)}, A_1
2^{-k}))} \leq C\, \big\| f \big\|_{L^1}.
\end{eqnarray}
Taking $l \geq l_1 + l_2$, by $(\ref{a49})$ and Lemma \ref{lem14},
we get
\begin{equation}\label{a52}
J_2 \leq C \sum_{(k, \alpha)} \sum_{(k', \alpha') \in \Xi_{(k,
\alpha)}} \big( 1+2^{-k'} |\,g^{-1}_{(k', \alpha')}\, g_{(k,
\alpha)}| \big)^{-l +l_0}\, \big\| \xi_{(k', \alpha')}f \big\|_{L^1} \leq
C\, \big\| f \big\|_{L^1}.
\end{equation}
Lemma \ref{lem21} follows from the combination of $(\ref{a50})$,
$(\ref{a51})$ and $(\ref{a52})$.
\end{proof}

Now we give the proof of Theorem \ref{thm3}.

{\flushleft \it Proof of Theorem \ref{thm3}.} Assume first that
\begin{eqnarray*}
\big\| f \big\|_{L^1} + \sum_{j=1}^{2n} \big\| R^L_jf \big\|_{L^1} < \infty.
\end{eqnarray*}
Since
\begin{eqnarray*}
&&\big\| R_j(\xi_{(k, \alpha)}f) \big\|_{L^1(B^*_{(k, \alpha)})}\\
&&\qquad \leq \big\| R^L_j(\xi_{(k, \alpha)}f) - R_j(\xi_{(k, \alpha)}f)
\big\|_{L^1(B^*_{(k, \alpha)})} + \big\| R^L_j(\xi_{(k,
\alpha)}f) \big\|_{L^1(B^*_{(k, \alpha)})}\\
&&\qquad \leq \big\| R^L_j(\xi_{(k, \alpha)}f) - R_j(\xi_{(k, \alpha)}f)
\big\|_{L^1(B^*_{(k, \alpha)})} + \big\| R^L_j(\xi_{(k, \alpha)}f)
\big\|_{L^1((B^*_{(k, \alpha)})^c)}\\
&&\qquad \quad + \big\| R^L_j (\xi_{(k, \alpha)}f) - \xi_{(k,
\alpha)}\,R^L_j f \big\|_{L^1} + \big\| \xi_{(k, \alpha)}\,R^L_j f \big\|_{L^1},
\end{eqnarray*}
Lemmas $\ref{lem19}-\ref{lem21}$ give
\begin{eqnarray*}
\sum_{(k, \alpha)} \big\| R_j(\xi_{(k, \alpha)}f) \big\|_{L^1(B^*_{(k,
\alpha)})} \leq C \big ( \big\| f \big\|_{L^1} +  \big\| R^L_jf \big\|_{L^1} \big ).
\end{eqnarray*}
Note that
\begin{eqnarray*}
\big\| R_j(\xi_{(k, \alpha)}f) \big\|_{L^1(B^*_{(k, \alpha)})} &=& \big\|
\widetilde{R}^{[k+3]}_j(\xi_{(k, \alpha)}f) \big\|_{L^1(B^*_{(k, \alpha)})},\\
\big\| \widetilde{R}^{[k]}_j(\xi_{(k, \alpha)}f) \big\|_{L^1} &=& \big\|
\widetilde{R}^{[k]}_j(\xi_{(k, \alpha)}f) \big\|_{L^1(B^*_{(k, \alpha)})}.
\end{eqnarray*}
Since $\big\| R^{[k]}_j(\cdot) - R^{[k+3]}_j(\cdot) \big\|_{L^1} \leq C$, we have
\begin{eqnarray*}
\big\| \widetilde{R}^{[k]}_j(\xi_{(k, \alpha)}f) -
\widetilde{R}^{[k+3]}_j(\xi_{(k, \alpha)}f) \big\|_{L^1(B^*_{(k,
\alpha)})} \leq C\, \big\| \xi_{(k, \alpha)}f \big\|_{L^1},
\end{eqnarray*}
which yields
\begin{eqnarray}
&&\sum_{(k, \alpha)} \big\| \widetilde{R}^{[k]}_j(\xi_{(k,\alpha)}f) \big\|_{L^1} \nonumber\\
&&\qquad \leq \sum_{(k, \alpha)} \big\| \widetilde{R}^{[k]}_j(\xi_{(k,\alpha)}f)
- \widetilde{R}^{[k+3]}_j(\xi_{(k, \alpha)}f)
\big\|_{L^1(B^*_{(k, \alpha)})} + \sum_{(k, \alpha)} \big\| R_j(\xi_{(k,\alpha)}f)
\big\|_{L^1(B^*_{(k, \alpha)})} \nonumber\\
&&\qquad\leq C \big ( \big\| f \big\|_{L^1} +  \big\| R^L_jf \big\|_{L^1}
\big). \label{a53}
\end{eqnarray}
As pointed out in Remark \ref{rem3}, $(\ref{a53})$ implies that every
$\xi_{(k, \alpha)}f$ is in $h^1_k(\mathbb{H}^n)$ and allows the atomic
decomposition
\begin{eqnarray*}
\xi_{(k, \alpha)}f = \sum_{i} \lambda^{(k, \alpha)}_{i}\, a^{(k,
\alpha)}_{i}
\end{eqnarray*}
with
\begin{eqnarray*}
\sum_{i} \big| \lambda^{(k, \alpha)}_{i} \big| \leq C \bigg( \big\|
\xi_{(k, \alpha)}f \big\|_{L^1} + \sum_{j=1}^{2n} \big\|
\widetilde{R}^{[k]}_j(\xi_{(k, \alpha)}f) \big\|_{L^1} \bigg).
\end{eqnarray*}
Therefore we get
\begin{eqnarray*}
f = \sum_{(k, \alpha)} \sum_{i} \lambda^{(k, \alpha)}_{i}\, a^{(k,
\alpha)}_{i}
\end{eqnarray*}
and
\begin{eqnarray*}
\sum_{(k, \alpha)} \sum_{i} \big| \lambda^{(k, \alpha)}_{i} \big|
\leq C \bigg ( \big\| f \big\|_{L^1} + \sum_{j=1}^{2n} \big\| R^L_jf \big\|_{L^1} \bigg).
\end{eqnarray*}

For the reverse inequality, by Theorem \ref{thm2} and Lemma
\ref{lem18}, we have to check
\begin{eqnarray}\label{a54}
\big\| R^L_j a \big\|_{L^1} \leq C
\end{eqnarray}
for every $H^{1,q}_L$-atom $a$. Let $a$ be an $H^{1,q}_L$-atom supported
on a ball $B(g_0, r)$. Of course, we may assume that $q \leq 2$.
Since $R^L_j$ is bounded on $L^q(\mathbb{H}^n)$,
\begin{eqnarray*}
\big\| R^L_j a \big\|_{L^1(B(g_0, 2r))} \leq \big| B(g_0, 2r) \big|^{\frac{1}{q'}} \big\|
R^L_j a \big\|_{L^q} \leq C\, \big| B(g_0, r) \big|^{\frac{1}{q'}} \big\| a \big\|_{L^q}
\leq C.
\end{eqnarray*}
By the same argument as $(\ref{a45})$, we have
\begin{eqnarray*}
\big\| R^L_j a \big\|_{L^1(B(g_0, 2r)^c)} \leq C\, \big\| a \big\|_{L^1} \leq C.
\end{eqnarray*}
This proves $(\ref{a54})$, and the proof of Theorem \ref{thm3} is finished. \qed

\begin{remark}\label{rem6}
{\rm It follows from Theorem \ref{thm2}, Lemma \ref{lem18}, and $(\ref{a54})$
that the Riesz transforms $R^L_j$ are bounded from $H^1_L(\mathbb{H}^n)$
to $L^1(\mathbb{H}^n)$, and hence bounded from $H^1(\mathbb{H}^n)$
to $L^1(\mathbb{H}^n)$.}
\end{remark}

Finally we construct a counterexample which shows that the range
of $p$ ensuring the boundedness of the Riesz transforms $R^L_j$
can not be improved. The counterexample is similar to the one on
$\mathbb{R}^n$ given by Shen \cite{Shen}. The main difference is
the appearance of the function $\psi(g)$. Define the function
$\psi(g)$ by
\begin{eqnarray*}
\psi(g)= \frac{|x|^2}{|(x, t)|^2}, \qquad 0 \ne g=(x, t) \in \mathbb{H}^n.
\end{eqnarray*}
This function was introduced in \cite{Garofalo}. We simply remark
that $\psi$ is homogeneous of degree zero and $0\leq \psi\leq 1$.

\begin{lemma}\label{lem22} Let $1<q<\infty$. If $-\frac{Q}{q} <
\beta < \infty$, then $|g|^{\beta} \psi(g)$ belongs to $B_q$ class.
\end{lemma}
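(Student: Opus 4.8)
The plan is to verify the reverse Hölder inequality for $V(g):=|g|^{\beta}\psi(g)$ directly. First I would rewrite $V$ transparently: since $\psi(g)=|x|^{2}/|g|^{2}$ for $g=(x,t)$,
\[
  V(g)=|x|^{2}|g|^{\beta-2}=|x|^{2}\bigl(|x|^{4}+t^{2}\bigr)^{(\beta-2)/4}.
\]
In particular $V$ is homogeneous of degree $\beta$, i.e. $V(\delta_{s}g)=s^{\beta}V(g)$, and $V^{q}\in L^{1}_{\mathrm{loc}}$ because $\beta q>-Q$ forces $|g|^{\beta q}\in L^{1}_{\mathrm{loc}}$ and $V^{q}\le|g|^{\beta q}$ (as $|x|\le|g|$). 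Using this homogeneity together with $\delta_{s}\bigl(B(g_{0},\varrho)\bigr)=B(\delta_{s}g_{0},s\varrho)$ and $|\delta_{s}E|=s^{Q}|E|$, one checks that the quotient $\bigl(\tfrac{1}{|B|}\int_{B}V^{q}\bigr)^{1/q}\big/\bigl(\tfrac{1}{|B|}\int_{B}V\bigr)$ is unchanged if $B$ is replaced by $\delta_{s}B$; since $B(g_{0},\varrho)=\delta_{\varrho}\bigl(B(\delta_{1/\varrho}g_{0},1)\bigr)$, it therefore suffices to bound this quotient over balls of radius $1$, $B=B(g_{0},1)$.

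For $B(g_{0},1)$ with $|g_{0}|$ bounded by a fixed large constant $\Lambda$ I would argue by compactness. Then $B(g_{0},1)\subset B(0,\Lambda+1)$, so $\tfrac{1}{|B(g_{0},1)|}\int_{B(g_{0},1)}V^{q}\le b_{1}^{-1}\int_{B(0,\Lambda+1)}|g|^{\beta q}<\infty$ uniformly in $g_{0}$. On the other hand the map $g_{0}\mapsto\int_{B(g_{0},1)}V(g)\,dg$ is continuous (absolute continuity of the Lebesgue integral, together with $|B(g_{0},1)\triangle B(g_{0}',1)|\to0$ as $g_{0}'\to g_{0}$) and strictly positive (since $V>0$ a.e.), hence bounded below on the compact set $\{|g_{0}|\le\Lambda\}$. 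This yields the reverse Hölder inequality in this range.

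For $B(g_{0},1)$ with $|g_{0}|\ge\Lambda$ the triangle inequality for the homogeneous norm gives $|g|\sim|g_{0}|$ for every $g\in B(g_{0},1)$, so $V(g)\sim|g_{0}|^{\beta-2}|x|^{2}$ there; the factor $|g_{0}|^{\beta-2}$ cancels from the quotient, reducing matters to the reverse Hölder inequality for the weight $w(g)=|x|^{2}$ over unit balls. If $|x_{0}|\ge2$, then since the first coordinate of $g_{0}^{-1}g$ equals $x-x_{0}$ we get $|x-x_{0}|\le|g_{0}^{-1}g|<1$, hence $|x|\sim|x_{0}|$ on $B(g_{0},1)$ and $w$ is essentially constant, so there is nothing to prove. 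If $|x_{0}|<2$, integrating out the central variable gives, for every $\gamma\ge0$,
\[
  \frac{1}{|B(g_{0},1)|}\int_{B(g_{0},1)}|x|^{2\gamma}\,dg
   =\frac{2}{b_{1}}\int_{|y|<1}|x_{0}+y|^{2\gamma}\sqrt{1-|y|^{4}}\,dy=:F_{\gamma}(x_{0}),
\]
which depends only on $x_{0}\in\mathbb{R}^{2n}$; here $F_{q}$ is uniformly bounded on $\{|x_{0}|\le2\}$ while $F_{1}$ is continuous and strictly positive there, so $F_{q}(x_{0})^{1/q}\le C\,F_{1}(x_{0})$ by compactness in $\mathbb{R}^{2n}$. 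Combining the two ranges of $|g_{0}|$ gives $V\in B_{q}$.

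The scaling identities and the two compactness lower bounds are routine; the one point needing thought is the reduction in the regime $|g_{0}|\ge\Lambda$, where one must observe that the degeneracy of $\psi$ in the central direction is exactly offset by the homogeneity factor $|g|^{\beta}$, so that on a unit ball $V$ is comparable to a constant multiple of the harmless weight $|x|^{2}$. I expect the fiber‑integration identity for $\int_{B(g_{0},1)}|x|^{2\gamma}\,dg$ and the verification that $F_{1}$ stays bounded away from $0$ to be where most of the (still elementary) work lies.
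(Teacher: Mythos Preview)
Your argument is correct and structurally parallel to the paper's: both split according to whether the center is close to the origin relative to the radius (your scaling reduction turns this into a comparison of $|g_{0}|$ to a fixed $\Lambda$, while the paper compares $|h|$ to $2r$ directly), and in the ``far'' case both factor out $|g|^{\beta-2}\sim|g_{0}|^{\beta-2}$ to reduce to the weight $|x|^{2}$. The execution differs. In the ``near'' case the paper produces an explicit lower bound for the average of $V$ by locating a sub-ball $B(h_{0},r/8)\subset B(h,r)$ on which $\psi\sim1$ and $|g|\sim r$, whereas you invoke compactness and continuity of $g_{0}\mapsto\int_{B(g_{0},1)}V$. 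For the residual weight $|x|^{2}$ the paper appeals to the general fact that a nonnegative polynomial $P$ on $\mathbb{H}^{n}$ satisfies $\max_{B}P\le\frac{C}{|B|}\int_{B}P$ on every ball (applied to $P(g)=|g|^{2}\psi(g)=|x|^{2}$), while you compute the fiber integral explicitly and use compactness in $x_{0}\in\mathbb{R}^{2n}$. Your route is self-contained and avoids citing the polynomial reverse-doubling lemma, at the cost of being non-constructive; the paper's argument is more direct and yields effective constants.
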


\begin{proof} Given a ball $B(h,r)$ with $|h| \leq 2r$, we have
\begin{eqnarray*}
\bigg ( \frac{1}{|B(h,r)|}\int_{B(h,r)} \big( |g|^{\beta}
\psi(g) \big)^q\, dg \bigg )^{\frac{1}{q}} \leq  C \bigg (
r^{-Q}\int_{B(0,4r)}|g|^{\beta q}\, dg \bigg)^{\frac{1}{q}}
\leq C r^{\beta}.
\end{eqnarray*}
On the other hand, there exists $h_0=(x_0, t_0) \in B(h,
\frac{r}{2})$ such that $|x_0|^2 \geq \frac{r^2}{16}$.
If $g \in B(h_0, \frac{r}{8})$, then $|g| \sim r$ and $\psi(g)
\sim 1$. Hence
\begin{eqnarray*}
\frac{1}{|B(h,r)|}\int_{B(h,r)} |g|^{\beta}\psi(g)\, dg \geq
\frac{1}{C}\,r^{-Q}\int_{B(h_0, \frac{r}{8})} |g|^{\beta}\psi(g)\, dg  \geq \frac{1}{C}\,
r^{\beta}.
\end{eqnarray*}
It follows that
\begin{eqnarray*}
\bigg( \frac{1}{|B(h,r)|} \int_{B(h,r)} \big  |g|^{\beta}
\psi(g) \big)^q\, dg \bigg)^{\frac{1}{q}}
\leq \frac{C}{|B(h,r)|}\int_{B(h,r)} |g|^{\beta} \psi(g)\, dg.
\end{eqnarray*}
In the case of $|h| > 2r$, we have $|g| \sim |h|$ when $g \in
B(h,r)$. Note that $P(g)= |g|^{2}\psi(g)$ is a nonnegative polynomial
of homogeneous degree two and satisfies
\begin{eqnarray*}
\max_{g\in B} P(g)\leq \frac{C}{|B|}\int_B P(g)\, dg\qquad
\text{for every ball}\ B\subset \mathbb H^n.
\end{eqnarray*}
Therefore,
\begin{eqnarray*}
\bigg( \frac{1}{|B(h,r)|}\int_{B(h,r)} \big( |g|^{\beta}
\psi(g) \big )^q\, dg \bigg)^{\frac{1}{q}}
&\leq& C\, |h|^{\beta -2} \max_{g \in B(h,r)} P(g) \\
&\leq& C\, |h|^{\beta -2}\bigg( \frac{1}{|B(h,r)|}\int_{B(h,r)}
P(g)\, dg \bigg)\\
&\leq& C \bigg( \frac{1}{|B(h,r)|}\int_{B(h,r)}
|g|^{\beta}\psi(g)\, dg \bigg)
\end{eqnarray*}
and Lemma \ref{lem22} is proved.
\end{proof}

Now we consider the nonnegative potential
\begin{eqnarray*}V(g)= |g|^{\beta -2}\psi(g),\qquad 0< \beta <2.
\end{eqnarray*}
We look for a radial solution $u(g)=f(|g|)$ of the equation
\begin{eqnarray}\label{a55}
-\Delta_{\mathbb H^n}u+ |g|^{\beta -2}\psi u=0.
\end{eqnarray}
By the following facts (cf. \cite{Garofalo}):
\begin{equation}\label{a56}
\Delta_{\mathbb H^n}(|g|) = \frac{Q-1}{|g|} \psi(g) \quad
\mathrm{and} \quad \big |\, \nabla_{\mathbb H^n}(|g|)\, \big |^2 =
\psi(g) \qquad\text{for}\ \ |g| \neq 0,
\end{equation}
it turns out that $f$ must satisfy
\begin{eqnarray*} f''(|g|)+ \frac{Q-1}{|g|}f'(|g|)- |g|^{\beta-2}f(|g|)=0
\qquad\text{for}\ \ |g| \neq 0.
\end{eqnarray*}
It is easy to verify that
\begin{eqnarray*}
v(g)= \sum^\infty_{m=0} \frac{|g|^{\beta m}}{m! \beta^{2m}
\Gamma(\frac{Q-2}{\beta}+m+1)}
\end{eqnarray*}
is a radial solution of the equation (\ref{a55}).

Let $\phi\in C^\infty_c(\mathbb H^n)$ such that $\phi=1$ for
$|g|\leq 1$. Set $u=\phi v$. Then
\begin{eqnarray*}
-\Delta_{\mathbb H^n}u + |g|^{\beta-2}\psi u = \eta,
\end{eqnarray*}
where $\eta =-2\nabla_{\mathbb H^n}v \cdot \nabla_{\mathbb H^n}
\psi -v \Delta_{\mathbb H^n}\phi \in C^\infty_c(\mathbb H^n)$.

Given $\frac{Q}{2}<q_{_1}< Q$, let $\beta = 2-\frac{Q}{q_{_1}}$.
By Lemma \ref{lem22}, $V(g)= |g|^{-\frac{Q}{q_{_1}}} \psi(g) \in
B_q$ for any $q < q_{_1}$. If $R^L_j$ is bounded on
$L^p(\mathbb{H}^n)$ for $p=p_{_1}$ where
$\frac{1}{p_{_1}}=\frac{1}{q_{_1}}-\frac{1}{Q}$, then
\begin{eqnarray*}
\big\| \nabla_{\mathbb{H}^n}u \big\|_{L^{p_{_1}}}
= \big\| \nabla_{\mathbb H^n}(- \Delta_{\mathbb{H}^n}
+V)^{-1} \eta \big\|_{L^{p_{_1}}} \leq C\, \big\| (-\Delta_{\mathbb
H^n}+V)^{-\frac{1}{2}} \eta \big\|_{L^{p_{_1}}}.
\end{eqnarray*}
It follows from $(\ref{a5})$ and $(\ref{a9})$ that
\begin{eqnarray*}
\big| (-\Delta_{\mathbb H^n}+V)^{-\frac{1}{2}} \eta(g) \big| \leq C
\int_{\mathbb{H}^n} \frac{|\eta (h)|}{|g^{-1}h|^{Q-1}}\, dh \leq
\frac{C}{(1+|g|)^{Q-1}}.
\end{eqnarray*}
Thus
\begin{eqnarray*}
\big\| \nabla_{\mathbb H^n}u \big\|_{L^{p_{_1}}}\, \leq C \big\|
(-\Delta_{\mathbb H^n}+V)^{-\frac{1}{2}} \eta \big\|_{L^{p_{_1}}}
< \infty.
\end{eqnarray*}
On the other hand, by $(\ref{a56})$ we have
\begin{eqnarray*}
|\nabla_{\mathbb{H}^n} u| \sim
|g|^{1-\frac{Q}{q_{_1}}}\psi^{\frac{1}{2}} =
|g|^{-\frac{Q}{p_{_1}}}\psi^{\frac{1}{2}},\qquad \text{as} \ \ g
\rightarrow 0.
\end{eqnarray*}
By the following formula about changing variables
\begin{eqnarray*}
\int_{\mathbb{H}^n} f(x,t)\, dx\, dt = \int_{S^{2n-1}}
\int_{-\frac{\pi}{2}}^{\frac{\pi}{2}} \int_0^{\infty} f \big(
r(\cos \theta)^{\frac{1}{2}}x', r^2 \sin \theta \big) (\cos
\theta)^{n-1} r^{Q-1}\, dr\, d\theta\, dx',
\end{eqnarray*}
where $S^{2n-1}$ is the unit sphere in $\mathbb{R}^{2n}$ (cf.
\cite{Coulhon}), it is easy to see that $\nabla_{\mathbb{H}^n}u
\not \in L^{p_{_1}}(\mathbb{H}^n)$. We have a contradiction.


\section{Results for stratified groups}

In this section, we state results for stratified groups.
We consistently use the same notations and terminologies as those 
in Folland and Stein's book \cite{Folland-Stein}.

Let $G$ be a stratified group of dimension $d$ with the Lie
algebra $\frak g$. This means that $\frak g$ is equipped
with a family of dilations $\{ \delta_r:\, r>0 \}$ and $\frak g$
is a direct sum $\bigoplus_{j=1}^m \frak g_j$ such that
$[\frak g_i, \frak g_j] \subset  \frak g_{i+j}$, $\frak g_1$
generates $\frak g$, and $\delta_r(X)= r^jX$ for $X \in
\frak g_j$. $Q=\sum_{j=1}^m j\,d_j$ is called the homogeneous
dimension of $G$, where $d_j= \dim \frak g_j$. $G$ is
topologically identified with $\frak g$ via the exponential
map $\exp: \frak g \mapsto G$ and $\delta_r$ is also
viewed as an automorphism of $G$. We fix a homogeneous norm of
$G$, which satisfies the generalized triangle inequalities
\begin{eqnarray*}
&& |xy| \leq \gamma (|x|+|y|)\qquad \text{for all} \ x,y \in G,\\
&& \big | |xy|-|x| \big | \leq \gamma |y|\qquad \text{for all} \ 
x,y \in G \ \text{with} \ |y| \leq \frac{|x|}{2},
\end{eqnarray*}
where $\gamma \geq 1$ is a constant. The ball of radius $r$
centered at $x$ is written by
\begin{eqnarray*}
B(x,r)=\{y \in G: \; |x^{-1}y |\, <r\}.
\end{eqnarray*}
The Haar measure on $G$ is simply the Lebesgue measure on $\mathbb R^d$ 
under the identification of $G$ with $\frak g$ and the identification 
of $\frak g$ with $\mathbb R^d$, where $d=\sum_{j=1}^m d_j$. 
The measure of $B(x,r)$ is 
\begin{eqnarray*}
\big| B(x,r) \big| = b\,r^{Q},
\end{eqnarray*}
where $b$ is a constant.

We identify $\frak g$ with $\frak g_L$, the Lie algebra
of left-invariant vector fields on $G$. Let $\{ X_j:\, j=1,
\cdots, d_1 \}$ be a basis of $\frak g_1$. The sub-Laplacian
$\Delta_G$ is defined by
\begin{eqnarray*}
\Delta_G= \sum^{d_1}_{j=1} X^2_j.
\end{eqnarray*}

The theory of Hardy spaces on homogeneous groups was studied by
Folland and Stein \cite{Folland-Stein}. Christ and Geller
\cite{Christ} gave the Riesz transforms characterization of the
Hardy space $H^1$ on stratified groups. The theory of local Hardy
spaces on stratified groups can be established as in Section 4.
In detail, we define the scaled local maximal functions
$\widetilde{M}_k f$'s by
\begin{align*}
&\widetilde{M}_{\phi,k}f(x)=\sup_{|x^{-1}y|<r\le 2^k} \big|
f*\phi_r(y) \big|,
&&\widetilde{M}^+_{\phi,k}f(x)=\sup_{0<r\le 2^k} \big| f*\phi_r(x) \big|,\\
&\widetilde{M}_{(N),k}f(x)=\sup_{\phi\in\mathscr S(G) \atop
\|\phi\|_{(N)}\le 1} \widetilde{M}_{\phi,k}f(x),
&&\widetilde{M}^+_{(N),k}f(x)=\sup_{\phi\in\mathscr S(G) \atop
\|\phi\|_{(N)}\le 1} \widetilde{M}^+_{\phi,k}f(x).
\end{align*}
The scaled local Hardy space $h^p_k(G)\, (p \leq 1)$ is defined by
\begin{eqnarray*}
h^p_k(G)= \big\{ f \in \mathscr {S}'(G):\; \widetilde{M}_{(N_p),\,
k}f \in L^p(G),\, N_p= [Q(\frac{1}{p} -1)] +1 \big\}
\end{eqnarray*}
with
\begin{eqnarray*}
\big\| f \big\|_{h^p_k} = \big\| \widetilde{M}_{(N_p),\, k}f
\big\|_{L^p}.
\end{eqnarray*}
Then we have
\begin{eqnarray*}
\big\| \widetilde{M}_{(N),\, k}f \big\|_{L^p} \sim \big\|
\widetilde{M}^+_{(N),\, k}f \big\|_{L^p} \sim \big\|
\widetilde{M}_{\phi,\, k}f \big\|_{L^p} \sim \big\|
\widetilde{M}^+_{\phi,\, k}f \big\|_{L^p},
\end{eqnarray*}
where $\phi$ is a commutative approximate identity and $N \geq
N_p$ is fixed. We have the atomic decomposition of $h^p_k(G)$ as
follows. Let $0<p \leq 1<q \leq \infty$. A function $a\in
L^q(G)$ is called an $h^{p,q}_k$-atom if the following
conditions hold:
\begin{itemize}
\item [\rm(i)] $\mathrm{supp}\, a \subset B(x_0,r)$, 
\item [\rm(ii)] $\big\| a \big\|_{L^{q}} \leq \big| B(x_0,r)
      \big|^{\frac{1}{q}-\frac{1}{p}}$, 
\item [\rm(iii)] if $\ r < 2^{k}$, then $\displaystyle\int_{B(x_0,r)} a(x) x^I\, dx =0\ $ for
      $\ d(I) < N_p$,
\end{itemize}
where $d(I)$ is the homogeneous degree of the monomial $x^I$. Then
$f \in h^p_k(G)$ if and only if $\,f$ can be written as $f= \sum_j
\lambda_j\, a_j$ converging in the sense of distributions and in
$h^p_k(G)$ norm, where $a_j$ are $h^{p,q}_k$-atoms and $\sum_j
|\lambda_j|^p < \infty$. Moreover,
\begin{eqnarray*}
\big\| f \big\|^p_{h^p_k} \sim \inf \bigg \{ \sum_j |\lambda_j|^p
\bigg \},
\end{eqnarray*}
where the infimum is taken over all atomic decompositions of $\,f$
into $h^{p,q}_k$-atoms. For $p=1$, $h^1_k(G)$ is also
characterized by the local Riesz transforms. Let
\begin{eqnarray*}
R_j= X_j (-\Delta_{G})^{-\frac{1}{2}},\qquad j=1, \cdots, d_1,
\end{eqnarray*}
be the Riesz transforms with the convolution kernel $R_j(x)$. A
function $f \in h^1_k(G)$ if and only if $f \in L^1(G)$ and
$\widetilde{R}^{[k]}_jf \in L^1(G),\, j=1, \cdots, d_1$, where the
local Riesz transforms are defined by $\widetilde{R}^{[k]}_jf= f
\ast R^{[k]}_j$ and $R^{[k]}_j(x)= \zeta (2^{-k} x)\, R_j(x)$ with
$\zeta \in C^{\infty}(G)$ satisfying $0 \leq \zeta (x) \leq 1$,
$\zeta (x)=1$ for $|x| < \frac{1}{2}$, and $\zeta (x)=0$ for $|x|> 1$. 
Moreover,
\begin{eqnarray*}
\big\| f \big\|_{h^1_k} \sim \big\| f \big\|_{L^1} +
\sum_{j=1}^{d_1} \big\| \widetilde{R}^{[k]}_jf \big\|_{L^1}.
\end{eqnarray*}

Let us consider the Schr\"odinger operator $L= -\Delta_{G}+V$,
where the potential $V$ is nonnegative and belongs to the reverse
H\"older class $B_{\frac{Q}{2}}$. We define the Hardy space
$H^1_L(G)$ associated with the Schr\"odinger operator $L$ by the
maximal function with respect to the semigroup $\big\{ T^L_s:\;
s>0 \big\} = \big\{ e^{-s L}:\; s>0 \big\}$. A function $f\in
L^1(G)$ is said to be in $H^1_L(G)$ if the maximal function $M^Lf$
belongs to $L^1(G)$, where $M^Lf(x)= \sup _{s>0} \big| T^L_sf(x)
\big|$. The norm of such a function is defined by $\big\| f
\big\|_{H^1_L} = \big\|M^Lf \big\|_{L^1}$. The atomic
decomposition of $H^1_L(G)$ is as follows. Let $1<q \leq \infty$.
A function $a\in L^q(G)$ is called an $H^{1,q}_L$-atom if the
following conditions hold:
\begin{itemize}
\item [\rm(i)] $\mathrm{supp}\, a \subset B(x_0,r)$, 
\item [\rm(ii)] $\big\| a \big\|_{L^{q}} \leq \big| B(x_0,r)
      \big|^{\frac 1q -1}$, 
\item [\rm(iii)] if $\ r < \rho(x_0),$ then
      $\displaystyle\int_{B(x_0,r)} a(x)\, dx =0$,
\end{itemize}
where the auxiliary function $\rho(x)= \rho(x, V)$ is defined as
before; that is,
\begin{eqnarray*}
\rho(x)= \sup_{r>0}\, \bigg \{ r:\; \frac{1}{r^{Q-2}}\int_{B(x,
r)}V(y)\, dy \leq 1 \bigg \}, \qquad x\in G.
\end{eqnarray*}
Let $f \in L^1(G)$ and $1<q \leq \infty$. Then $f \in H^1_L(G)$ if
and only if $\,f$ can be written as $f= \sum_j \lambda_j\, a_j$,
where $a_j$ are $H^{1,q}_L$-atoms, $\sum_j |\lambda_j|< \infty$,
and the sum converges in $H^1_L(G)$ norm. Moreover,
\begin{eqnarray*}
\big\| f \big\|_{H^1_L} \sim \inf \bigg\{ \sum_j |\lambda_j|
\bigg\},
\end{eqnarray*}
where the infimum is taken over all atomic decompositions of $\,f$
into $H^{1,q}_L$-atoms. The Hardy space $H^1_L(G)$ is also
characterized by the Riesz transforms $R^L_j$ associated with 
the Schr\"odinger operator $L$. These Riesz transforms are defined by
\begin{eqnarray*}
R^L_j= X_j L^{-\frac{1}{2}},\qquad j=1, \cdots, d_1.
\end{eqnarray*}
Each $R^L_j$ is bounded on $L^p(G)$ for $1<p\leq Q$ and bounded from 
$L^1(G)$ to $L^{1,\infty}(G)$. A function $f \in H^1_L(G)$ if and 
only if $\,f \in L^1(G)$ and $R^L_jf \in L^1(G),\, j=1, \cdots, d_1$.
Moreover,
\begin{eqnarray*}
\big\| f \big\|_{H^1_L} \sim \big\| f \big\|_{L^1} +
\sum_{j=1}^{d_1} \big\| R^L_jf \big\|_{L^1}.
\end{eqnarray*}
These results for the Hardy space $H^1_L$ on stratified groups 
can be proved by the same argument as for the Heisenberg group. 
In fact, the estimates in Section 3 keep true for stratified  groups.


\vskip 1cm

\flushleft Chin-Cheng Lin\\
          Department of Mathematics\\
          National Central University\\
          Chung-Li 320, Taiwan\\
          E-mail: clin@math.ncu.edu.tw
\vskip 0.75cm

\flushleft Heping Liu\\
          LMAM, School of Mathematical Sciences\\
          Peking University\\
          Beijing 100871, China\\
          E-mail: hpliu@pku.edu.cn
\vskip 0.75cm

\flushleft Yu Liu\\
          Department of Mathematics and Mechanics\\
          University of Science and Technology\\
          Beijing 100083, China\\
          E-mail: liuyu75@pku.org.cn

\end{document}